\documentclass[10pt, reqno,final]{amsart}
\usepackage[notcite,notref]{showkeys}
\usepackage{url,hyperref,multirow}
\usepackage{color}
\usepackage{epsfig,subfigure,amssymb,amsmath,version}
\usepackage{amssymb,version,graphicx,fancybox,mathrsfs,pifont,booktabs}
\usepackage{epstopdf}
\usepackage{cases}

\headheight=3pt \topmargin=0.3cm \textheight=8.3in \textwidth=5.5in
\setlength{\oddsidemargin}{1cm} \setlength{\evensidemargin}{1cm}
\addtolength{\voffset}{-5pt}

\textheight=21.6cm
\textwidth=14.9cm
\setlength{\oddsidemargin}{0.9cm}
\setlength{\evensidemargin}{0.9cm}

\catcode`\@=11 \theoremstyle{plain}

\newtheorem{lemma}{Lemma}[section]

\newtheorem{proposition}{Proposition}[section]

\newtheorem{remark}{Remark}[section]

\@addtoreset{figure}{section}
\renewcommand\thefigure{\thesection.\@arabic\c@figure}
\@addtoreset{table}{section}
\renewcommand\thetable{\thesection.\@arabic\c@table}

\newcommand{\bs}[1]{\boldsymbol{#1}}
\def \ri {{\rm i}}

\DeclareSymbolFont{ugmL}{OMX}{mdugm}{m}{n}
\SetSymbolFont{ugmL}{bold}{OMX}{mdugm}{b}{n}

\DeclareMathAccent{\wideparen}{\mathord}{ugmL}{"F3}

\title[Calculation of SPH and VSH Expansions]{accurate calculation of spherical and vector spherical harmonic expansions via spectral element grids}
\author[B. Wang, \; L. Wang \;  $\&$ \; Z. Xie] {Bo Wang$^{1,2}$, \; Li-Lian Wang$^{2}$ \;       and \;  Ziqing  Xie$^{1}$}
\thanks{\noindent $^{1}$Key Laboratory of High Performance Computing and Stochastic Information Processing (HPCSIP) (Ministry of Education of China), College of Mathematics and Computer Science, Hunan Normal University, 410081, P. R. China. The research of the first author is partially supported by NSFC Grants (11341002 and 11401206), the Construct Program of the Key Discipline in Hunan Province and a Scientific Research Fund of Hunan Provincial Education Department (No. 16B154). The third author is partially supported by NSFC (91430107 and 11171104) and the Construct Program of the Key Discipline in Hunan Province.\\
\indent $^{2}$Division of Mathematical Sciences, School of Physical
and Mathematical Sciences,  Nanyang Technological University,
637371, Singapore. The research of the first two authors is  supported by a Singapore  MOE AcRF Tier 2 Grant (MOE 2013-T2-1-095, ARC 44/13),  and  Singapore MOE AcRF Tier 1 Grant (RG 27/15).
}
\keywords{Spherical harmonics, vector spherical harmonics, spectral elements, analytic formulas, high mode expansions, multiple scattering, Helmholtz equation, Maxwell equations, far-field scattering waves}
 \subjclass[2000]{65Z05, 74J20, 78A40, 33E10,  35J05, 65M70,  65N35}

\begin{document}
\bibliographystyle{plain}
\maketitle

\vspace*{-12pt}
\begin{abstract}
We present in this paper a spectrally accurate numerical method  for  computing the spherical/vector spherical harmonic expansion of a  function/vector field with given (elemental) nodal values on a spherical surface. Built upon suitable analytic formulas for dealing with the involved highly oscillatory integrands, the method is robust for  high mode expansions. We apply the numerical method to the simulation of three-dimensional acoustic and electromagnetic multiple scattering problems.
Various numerical evidences show that the high  accuracy can be achieved within reasonable computational time.
This also paves the way for spectral-element discretization of  3D scattering problems reduced by spherical  transparent boundary conditions based on the Dirichlet-to-Neumann map.   
\end{abstract}

\section{Introduction}
Many applications in e.g.,
geophysics, weather and climate modelling, and optical and electromagnetic materials, involve computations over spherical surfaces, where  various scenarios require to calculate  the expansions of scalars or vector fields in spherical harmonics (SPH) or vector spherical harmonics (VSH).  Conventionally,  the associated   surface integrals on $\mathbb S^2=\{(\theta,\varphi) :  \theta \in [0,\pi],\, \varphi\in [0,2\pi)\},$ are evaluated by numerical quadrature on global ``tensor product" of equispaced grids (\cite{rokhlin2006fast,Swa.S00}):
\begin{equation}
\label{intnodephi}
\theta_j=\frac{\pi(j+1/2)}{J+1},\quad 1\leq j\leq J;\;\;\; \varphi_k=\frac{2\pi(k+1/2)}{2K+1},\quad 1\leq k\leq K.
\end{equation}
The number of points in two directions is generally determined by the so-called alias-free condition: $J\geq (3L+1)/2$ and $ K\geq 3L+1,$ where $L$ is the cut-off frequency or the highest expansion mode.
 The computational complexity of direct computation of the discrete spherical harmonics transform is $O(L^3).$ 
 Nevertheless, fast algorithms were developed based on the FFT and new fast associated Legendre transform \cite{Swa.S00,rokhlin2006fast, suda2002fast}.  The related spectral transform method has been widely used
in simulations of partial differential equations (PDEs) on the sphere (see \cite{Boyd01} and the references therein), after all the spherical harmonics are eigenfunctions of the Laplace-Beltrami operator. However,   it is noteworthy that the VSH  spectral transform method has been less studied.



Various partitions  of the spherical surfaces  and local element methods  have been proposed to  reduce the cost of nonlocal communications between global transforms and facilitate the parallel  implementation.   These  include the icosahedral, hexahedral, and cubed-sphere grids   (cf. \cite{giraldo2004scalable}), among which the   cubed-sphere
 partition of the sphere introduced by Sadourny
\cite{Sadourny} meets the desirable features for a ``good" partition  in the sense of
Phillips \cite{phillips1957coordinate}. In practice, spectral element methods with such cartesian coordinate-based partitions  provide powerful tools in weather prediction and climate modelling and simulations.

In this  paper, we are  concerned with high-mode SPH and VSH expansions, where  the scalar and vector fields of interest  are given nodal values resulted from a
(conforming or non-conforming) spectral element discretization inside or exterior to the spherical surface.  One important application is related to the acoustic and electromagnetic wave scattering with high wavenumber using spherical transparent (or nonreflecting) boundary condition (TBC)  (or NRBC) for domain reduction.
To fix the idea,  let $\mathbb{S}_h^2:=\{S^e\}_{e=1}^E$ be a generic  non-overlapping   partition (cf. Figure \ref{sphericalcoordinates}) of the sphere ${\mathbb S^2}$,  
and  consider the spectral element approximations of unknown function $u$ and vector field $\bs v$ projected upon (or restricted to)  ${\mathbb S}^2$ given by
\begin{equation}
\label{spectralelementapprox}
u_N^E(\theta,\varphi)=\sum\limits_{i=0}^N\sum\limits_{j=0}^Nu_{ij}^{e}\psi_{ij}(\mathcal{F}_e^{-1}(\theta,\varphi)),\quad
\bs v_N^E(\theta,\varphi)=\sum\limits_{i=0}^N\sum\limits_{j=0}^N
\bs v_{ij}^{e}\psi_{ij}(\mathcal{F}_e^{-1}(\theta,\varphi))\;\;\hbox{on}\;\;S^e,
\end{equation}
where  $\{u_{ij}^e\}_{i,j=0}^N$ and $\{\bs v_{ij}^e\}_{i,j=0}^N$ are nodal values on the spectral-element grid on $S^e$, $\mathcal{F}_e$ is
the elemental mapping from the reference square $Q:=[-1, 1]\times [-1, 1]$ to  $S^e,$ and $\{\psi_{ij}\}$ are the spectral-element basis functions. We intend to evaluate
\begin{equation}\label{expan1}
\begin{split}
&\tilde a_l^m=\int_{\mathbb{S}^2}u_N^E(\theta,\varphi)\overline{Y_l^m(\theta,\varphi)}dS,\quad {\rm and}\\
\tilde v_{lm}^r=\int_{\mathbb{S}^2}{\bs v}_N^E\cdot \overline{\bs Y_l^{m}}dS,\;\;
&\tilde v_{lm}^{(1)}=\varpi_l^{-1}\int_{\mathbb{S}^2}{\bs v}_N^E\cdot \overline{\bs\Psi_l^{m}}dS,
\;\;  \tilde v_{lm}^{(2)}=\varpi_l^{-1} \int_{\mathbb{S}^2}{\bs v}_N^E\cdot \overline{\bs\Phi_l^{m}}dS,
\end{split}
\end{equation}
where $\varpi_l=l(l+1)$ and $\{\bs Y_l^m, \bs\Psi_l^m, \bs\Phi_l^m\}_{0\le |m|\le l}$ are VSH defined in \eqref{vecylm}-\eqref{vecphilm} (cf. \cite{Swa.S00}).
%
The significant challenges reside in the following aspects.
\begin{enumerate}
\item[(i)] The spectral element approximation $u_N^E$ and $\bs v_N^E$ are piecewise smooth (i.e., only in $C^0(\mathbb{S}^2)$), 
    so the interplay between global quadrature points (cf. \eqref{intnodephi})   and spectral element grids via interpolation and  fast transformation algorithm has only  a first-order convergence.
\item[(ii)] When the SPH/VSH mode is high (i.e., $l$ is large),  the SPH and VSH (so the integrands) become  highly oscillatory.
On the other hand,  the elemental mapping (e.g., for the cubed-sphere grids) significantly complicates the integrand when one computes the integrals in the reference element $Q.$ Thus, the naive use of a composite rule with partition coherent to the spectral element partition $\mathbb{S}_h^2$ can be applied to produce high accuracy, but one has to use a large number  of points.
\end{enumerate}

To overcome these difficulties, we employ  a suitable partition of the spherical surface that
allows for using some analytic formulas to evaluate the above integrals exactly element by element.
 Moreover, the double integrals can be calculated by iterated integrals.  These are  essential for the efficiency of the algorithm, and effectiveness for high-mode expansions.   This approach  does not induce any additional error that exceeds accumulated computer round-off error by using these analytical formulas.
It is noteworthy that Fournier \cite{fournier2005exact} proposed a method for calculating  global Fourier coefficients
 for given nodal values on  non-conforming two-dimensional spectral elements.  Yang et al.  \cite{Yang2015}  introduced a  new elemental mapping for conforming (curvilinear) spectral elements in simulations of polygonal invisibility cloaks involving time-harmonic Maxwell equations with non-local circular Dirichlet-to-Neumann (DtN) boundary conditions.
A very efficient semi-analytic approach was proposed to compute Fourier coefficients via two-dimensional spectral element grids  therein.  The algorithm herein is  actually spawned by \cite{Yang2015}, and aims at paving the way for simulating three-dimensional acoustic and electromagnetic wave scattering with bounded scatterers using spherical DtN TBCs.
Accurate and rapid evaluation of integrals  in \eqref{expan1} on spectral element grids becomes  an exceedingly important ingredient for the whole algorithm (which we shall report  in a future work).

With the accurate tool at our disposal, we consider three-dimensional acoustic and electromagnetic wave scattering in several scenarios where the incident waves are given by high-order spectral element approximations. By using the separation variable method together with our high mode SPH and VSH expansion algorithms for spectral element approximations, we obtain very accurate approximations for high frequency scattering waves from a single spherical scatterer. Besides, we  provide a very stable means  to compute the ratio of spherical Hankel functions with a wide range of arguments. An algorithm consists of the proposed SPH expansion formulation and transformed field expansion (TFE) method (cf.  \cite{nicholls2006a,fang2007a}) is discussed for solving scattering problem with irregular scatterer. This idea can also be extended to electromagnetic scattering problem by using VSH expansion. The SPH and VSH expansions play an  important part in solving multiple scattering problems as well. The simulation becomes much  more challenging compared with the single scattering problem,  due to the unavoidable interactions between the scattering  waves from  different scatterers.  Much effort has been devoted to 2D problems (see, e.g.,  \cite{acosta2010coupling,young1975multiple,grote2004dirichlet,amirkulova2015acoustic} and the references therein). However,  there has been limited  study for the 3D case \cite{demkowicz2006few, jiang2012adaptive,huang2010two}. Here, we consider the case involving  multiple spherical scatterers where the incident wave is given by spectral element approximation. Unlike the 2D problem, the separation matrices in 3D case take much more complicated  form (cf. \cite{martin2006multiple}). Based on the recurrence formula presented in \cite{chew1992recurrence,gumerov2004recursions}, we derive a stable recurrence formula for the computation of the expansion coefficients of the scaled out-going wave functions.  Then the expansion coefficients for purely outgoing waves can be obtained by solving a linear system resulted by matching boundary data on the scatterers. The proposed algorithm actually provides  an accurate way to compute the far-field scattering waves from local element based approximations \cite{garate2012solution,song2013multi}. Besides,  they can be combined with some appropriate solvers \cite{acosta2010coupling} to tackle practical 3D problems involving scatterers with complex shapes.


The rest of the paper is organized as follows. In Section \ref{sect2:pre}, we collect some relevant properties of   spherical harmonics and vector spherical harmonics. We then present the algorithms for computing  the spherical harmonic coefficients and vector spherical harmonic coefficients in Section \ref{sect3:expan}. Some interesting applications in 3D multiple scattering problems are discussed in Section \ref{sect4:appl}. Ample numerical results are provided to validate the accuracy of the proposed algorithms.

\section{Preliminaries}\label{sect2:pre}

\subsection{Spherical harmonics}
The spherical coordinate of a given point $\bs x=(x, y, z)$ in $\mathbb{R}^3$ is represented by 
\begin{equation}\label{sphco}
x=r\sin \theta \cos \varphi,\quad y=r\sin\theta \sin
\varphi,\quad z=r\cos\theta,
\end{equation}
where $r\ge 0$, $\theta \in [0,\pi]$ and $\varphi \in [0,2\pi)$ stand for the radial distance, polar angle and azimuthal angle (see Figure \ref{sphericalcoordinates}).
The unit coordinate vectors in spherical coordinate system are denoted by $\{\bs e_r,\bs e_{\theta}, \bs e_{\varphi}\}$ which form a right hand coordinate system, i.e.,
\begin{equation}\label{eretheta}
\bs e_r\times \bs e_{\theta}=\bs e_{\varphi},\quad \bs e_{\theta}\times \bs e_{\varphi}=\bs e_r,\quad
\bs e_{\varphi}\times \bs e_r= \bs e_{\theta}.
\end{equation}
\begin{figure}[htbp]
\begin{center}
 \subfigure[Spherical coordinates]{ \includegraphics[scale=.19]{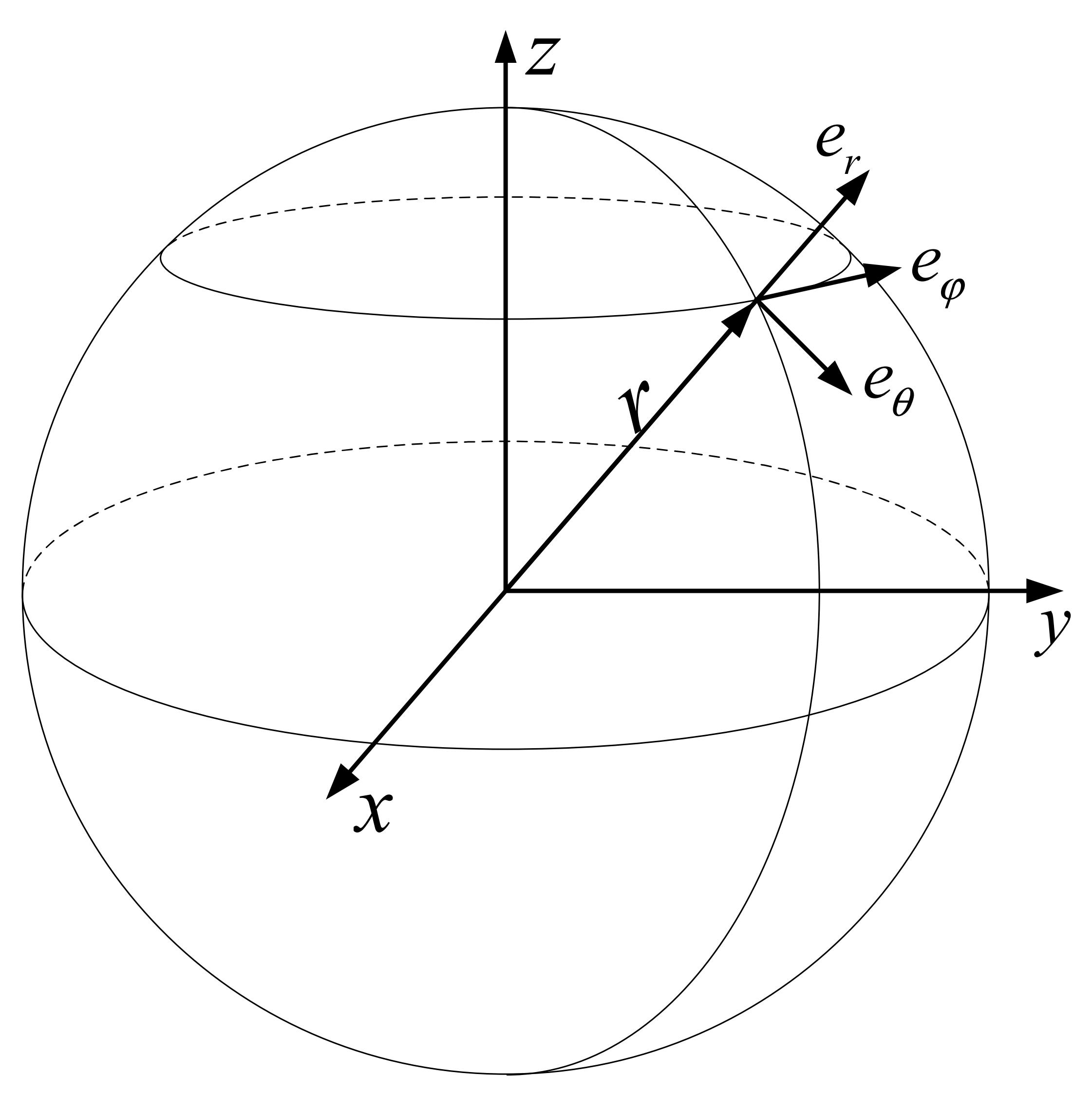}}\hspace*{16pt}
 \subfigure[Uniform partition]{\includegraphics[scale=.6]{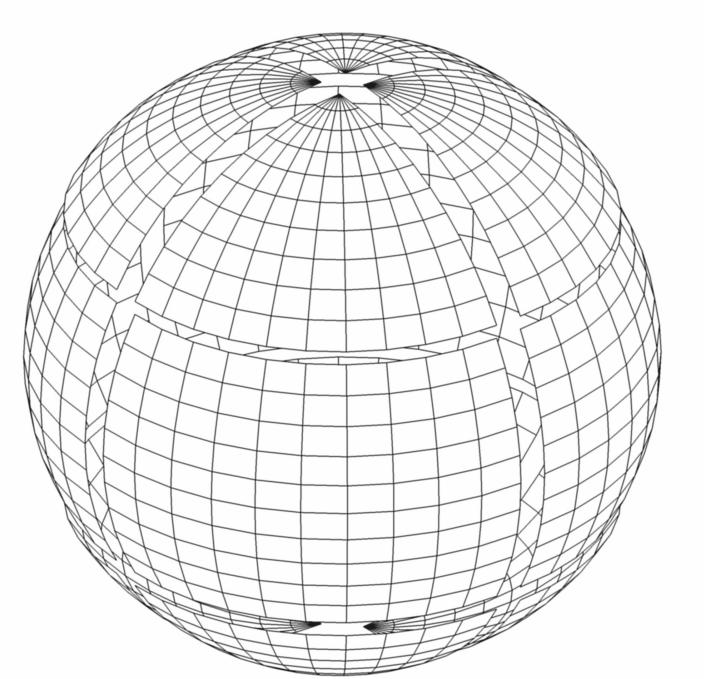}}\hspace*{16pt}
 \subfigure[Adaptive partition]{ \includegraphics[scale=.6]{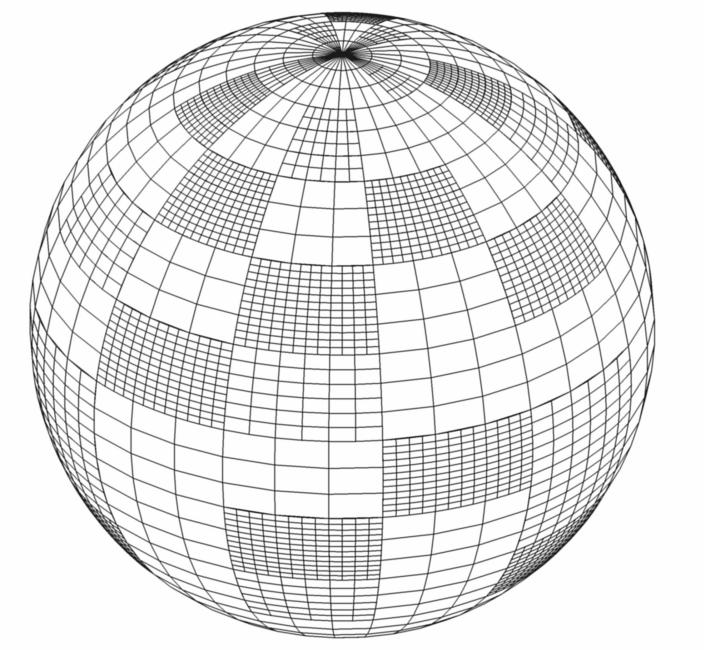}}
  \caption{\small Spherical coordinates system and partitions of unit sphere.}
\label{sphericalcoordinates}
\end{center}
\end{figure}
The spherical harmonics (see, e.g., \cite{Abr.S84})  are defined as
\begin{equation}
\label{sphharmonics}
Y_l^m(\theta, \varphi)=(-1)^m c_l^m
P_l^m(\cos\theta)e^{\ri m\varphi}=\widehat P_l^m(\cos\theta)e^{\ri m\varphi},\quad c_l^m:=\sqrt{\frac{2l+1}{4\pi}\frac{(l-m)!}{(l+m)!}}\,,
\end{equation}
for $0\le |m|\le l,$  $l=0,1\cdots, $
where $P_l^{m}$ (resp. $\widehat P_l^{m}$)  is the associated (resp. normalized) Legendre function of degree $l$ and order $m$.
Recall that  for integer $0\leq m\leq l,$
\begin{equation}
\label{neglegendre}
P_l^m(x)=(-1)^m(1-x^2)^{{m}/{2}} \frac{d^m}{dx^m}P_l(x),\quad x\in (-1,1),
\end{equation}
where $P_l(x)$ is the Legendre polynomial of degree $l$. For negative order, it is defined by 
\begin{equation}\label{negativem}
P_l^{-m}(x)=(-1)^m\frac{(l-m)!}{(l+m)!}P_l^m(x), \;\; {\rm so} \;\; \widehat  P_l^{-m}(x)=(-1)^m \widehat  P_l^{m}(x),\;\;\; 0<m\leq l.
\end{equation}
The so-defined spherical harmonics  $\big\{Y_l^{m}(\theta,\varphi)\big\}_{0\le |m|\le l}$ constitute a complete, orthogonal basis of $L^2(\mathbb {S}^2)$ (where ${\mathbb  S}^2 $ is a unit spherical surface), and
\begin{equation}\label{orthrela}
\langle Y_l^m, Y_l^m\rangle_{\,\mathbb S^2}=\int_0^{\pi}\int_0^{2\pi}Y_l^m(\theta,\varphi)\overline{Y_{l'}^{m'}}(\theta,\varphi)\sin\theta\, d\varphi d\theta=\delta_{ll'}\delta_{mm'},
\end{equation}
where $\bar u$ is the conjugate of $u.$
Note that we have
\begin{equation}\label{reflectrela}
Y_l^{-m}(\theta,\varphi)=(-1)^m\overline{Y_l^{m}}(\theta,\varphi).
\end{equation}


The  algorithms  to be proposed heavily  rely on the  trigonometric forms of the associated normalized Legendre functions $\{\widehat P_l^m(\cos\theta)\}$. For this purpose, we start with the second-order equation:
\begin{equation}\label{2ndode}
\frac{1}{\sin\theta}\frac{d}{d\theta}\Big(\sin\theta\frac{d}{d\theta} \widehat P_l^m(\cos\theta)\Big)
+\Big\{l(l+1)-\frac{m^2}{\sin^2\theta}\Big\}\widehat  P_l^m(\cos\theta)=0,
\end{equation}
which can be solved by the Fourier method.
Based  on the parity of $l$ and $m$, we have
\begin{eqnarray}
\widehat P_l^m(\cos\theta)=\sum\limits_{k=0}^{{l}/{2}}A_{lm}^k\cos(2k\theta),\quad  l\;\;\hbox{even},\;  m\;\;\hbox{even};\label{triform1}\\
\widehat P_l^m(\cos\theta)=\sum\limits_{k=1}^{{l}/{2}}A_{lm}^k\sin(2k\theta),\quad  l\;\;\hbox{even};\; m\;\;\hbox{odd}; \label{triform2}\\
\widehat P_l^m(\cos\theta)=\sum\limits_{k=1}^{{(l+1)}/{2}}A_{lm}^k\cos(2k-1)\theta,\quad l\;\;\hbox{odd};\;  m\;\;\hbox{even};
\label{triform3}\\
\widehat  P_l^m(\cos\theta)=\sum\limits_{k=1}^{{(l+1)}/{2}}A_{lm}^k\sin(2k-1)\theta,\quad l\;\; \hbox{odd};\; m\;\; \hbox{odd};\label{triform4}
\end{eqnarray}
where  $\{A_{lm}^k\}$ can be computed by  (backward) three-term recurrence relations obtained by
inserting the expansions into \eqref{2ndode}.   More precisely,  for a given pair $(l,m),$
\begin{itemize}
 \item  if $l$ is even, then we compute
 \begin{equation}\label{Aevenl0}
 \begin{split}
 &A_{lm}^{l/2}=(-1)^{\lceil m/2\rceil} d_{lm}; \quad    A_{lm}^{l/2-1}=a_{l/2+1} A_{lm}^{l/2}; \quad\\
 &   A_{lm}^{k-2}=a_k  A_{lm}^{k-1} +b_k A_{lm}^{k},\;\;\; k=\frac l 2,\cdots, 3; \\
 & A_{lm}^{0}= \begin{cases}
(a_2 A_{lm}^{1}+b_2 A_{lm}^{2})/2,\quad  &{\rm if}\;\; l\ge 4,\\[4pt]
 \dfrac{l(l+1)-2}{2l(l+1)-4m^2} A_{lm}^{1} ,\quad & {\rm if}\;\; l=2,
 \end{cases}
 \end{split}
 \end{equation}
 where $\lceil a\rceil$ is the smallest integer $\ge a,$ and
\begin{equation}\label{akbkdlm}
\begin{split}
&d_{lm}= \frac{\Gamma(l+1/2)}{\pi}\sqrt{\frac{2l+1}{(l-m)!(l+m)!}}\,,\\
& a_k=\frac{2(2m^2-l(l+1)+4(k-1)^2)}{2(k-2)(2k-3)-l(l+1)},\quad b_k=\frac{l(l+1)-2k(2k-1)}{2(k-2)(2k-3)-l(l+1)};
\end{split}
\end{equation}
\item if $l$ is odd, then we compute
 \begin{equation}\label{Aevenl02}
 \begin{split}
 &A_{lm}^{(l+1)/2}=(-1)^{\lceil m/2\rceil}d_{lm}; \quad A_{lm}^{(l-1)/2}=a_{(l+3)/2} A_{lm}^{(l+1)/2}; \\
 &  A_{lm}^{k-2}=a_k  A_{lm}^{k-1} +b_k A_{lm}^{k},\;\;\; k=\frac{l+1}{2},\cdots, 3;
 \end{split}
 \end{equation}
where
$$a_k=\frac{2(2m^2-l(l+1)+(2k-3)^2)}{(2k-5)(2k-4)-l(l+1)},\quad b_k=\frac{l(l+1)-2(k-1)(2k-1)}{(2k-5)(2k-4)-l(l+1)}.$$
\end{itemize}
\begin{remark}
The formulas for  $A_{lm}^{l/2}$   in \eqref{Aevenl0} and  $A_{lm}^{(l+1)/2}$ in  \eqref{Aevenl02} can be derived from
the Rodrigue's formula {\rm(}cf.  \cite{Szeg75}{\rm):}
\begin{equation}\label{rodrigues}
\widehat{P}_l^m(\cos\theta)=c_l^m\frac{(-1)^m}{2^ll!} \sin^m \theta \frac{d^{l+m}}{dx^{l+m}}(x^2-1)^l,\;\;\; x=\cos \theta,
\end{equation}
whose  leading trigonometric terms are respectively
\begin{equation}\label{leadPlm}
\widehat P_l^m(\cos\theta)=\begin{cases}
(-1)^{{m}/{2}} \dfrac{c_l^m (2l)!}{2^{2l-1}l!(l-m)!}\cos(l\theta)+\cdots, \;\; &  m\;\;\hbox{is even};\\[6pt]
(-1)^{{(m+1)}/{2}} \dfrac{ c_l^m (2l)!}{2^{2l-1}l!(l-m)!}\sin(l\theta)+\cdots, \;\; & m\;\;\hbox{is odd}.
\end{cases}
\end{equation}
Using \eqref{sphharmonics}  and  the identity {\rm(}cf. \cite{Abr.S84}{\rm):}
$$\sqrt{\pi}\Gamma(2l+1)=2^{2l}\Gamma(l+1/2)\Gamma(l+1),$$
we can obtain the desired formulas  in \eqref{Aevenl0} and \eqref{Aevenl02}. \qed
\end{remark}

%

\subsection{Vector spherical harmonics} Several versions of VSH with different notation and  properties  have been used in practice (see,  e.g., \cite{Morse53,Hill54,Nede01,HagStep07,freeden2009spherical,Swa.S00}).
Here, we adopt  the vector spherical harmonics (VSH) $\{\bs Y_l^m, \bs\Psi_l^m, \bs \Phi_l^m\}$ in e.g.,  \cite{Swa.S00}, defined by
\begin{align}
&\bs Y_l^m=Y_l^m\bs e_r,\label{vecylm}\\
&\bs\Psi_l^m=\nabla_SY_l^m=\frac{\partial Y_l^m}{\partial \theta}\bs e_{\theta}+\frac{1}{\sin\theta}\frac{\partial Y_l^m}{\partial \varphi}\bs e_{\varphi},\label{vecpsilm}\\
&\bs\Phi_l^m=\nabla_SY_l^m\times\bs e_r=\frac{1}{\sin\theta}\frac{\partial Y_l^m}{\partial \varphi}\bs e_{\theta}-\frac{\partial Y_l^m}{\partial \theta}\bs e_{\varphi},\label{vecphilm}
\end{align}
for $0\le |m|\le l,$ and note that $\bs \Psi_0^0=\bs \Phi_0^0=\bs 0$.
The VSH form a complete orthogonal basis (right hand) of $(L^2({\mathbb S^2}))^3$. More precisely, they are mutually orthogonal and
\begin{equation}\label{multualorth}
\int_{\mathbb S^2}\bs Y_l^m\cdot \overline{\bs Y_{l'}^{m'}}dS=\delta_{ll'}\delta_{mm'},\quad
\int_{\mathbb S^2} \bs\Psi_l^m\cdot \overline{\bs\Psi_{l'}^{m'}}dS=\int_{\mathbb S^2} \bs\Phi_l^m\cdot \overline{\bs\Phi_{l'}^{m'}}dS=l(l+1)\delta_{ll'}\delta_{mm'}.
\end{equation}

Notice from  \eqref{sphharmonics} that
  $$\frac{\partial Y_l^m}{\partial \theta}=e^{\ri m\varphi}\frac{d}{d\theta}\widehat P_l^m(\cos\theta),\quad \frac{1}{\sin\theta}\frac{\partial Y_l^m}{\partial \varphi}=\ri me^{\ri m\varphi}\frac{\widehat P_l^m(\cos\theta)}{\sin\theta},$$
 where  ${\widehat  P_l^m(\cos\theta)}/{\sin\theta}$ is singular at the poles.
  In order to compute the basis functions $\{\bs\Psi_l^m, \bs\Phi_l^m\}$ accurately and efficiently,  it is necessary to  compute   ${\widehat  P_l^m(\cos\theta)}/{\sin\theta}$ and $\frac{d}{d\theta}\widehat P_l^{m}(\cos\theta)$ by some compact combination of $\{\widehat P_l^m(\cos\theta)\}.$
For this purpose, we  recall  the recurrence formula (cf. \cite{Abr.S84})
\begin{equation*}\label{PlmA}
\sqrt{1-x^2}\frac{d}{dx}P_l^m(x)=\frac{1}{2}\big((l+m)(l-m+1)P_l^{m-1}(x)-P_l^{m+1}(x)\big),
\end{equation*}
so  by  \eqref{sphharmonics},
%
%
%
\begin{equation}\label{hatPrec}
\sqrt{1-x^2}\frac{d}{dx}\widehat P_l^m(x)=\frac{1}{2}\Big(\frac{c_l^m}{c_l^{m+1}}\widehat  P_l^{m+1}(x)-(l+m)(l-m+1)\frac{c_l^m}{c_l^{m-1}}\widehat P_l^{m-1}(x)\Big).
\end{equation}
Thus, we have
\begin{equation}
\label{deriassleg}
\left\{
\begin{split}
&\frac{d}{d\theta}\widehat P_l^m(\cos\theta)=\tilde c_l^{m,1}\widehat  P_l^{m-1}(\cos\theta)-\tilde c_l^{m,2}\widehat  P_l^{m+1}(\cos\theta),\;\;\; {\rm where}\\[4pt]
& \tilde c_l^{m,1}=\frac{1}{2}\sqrt{(l+m)(l-m+1)},\quad \tilde c_l^{m,2}=\frac{1}{2}\sqrt{(l+m+1)(l-m)}.
\end{split}\right.
\end{equation}
It is noteworthy that for $m=0,l,$ the above formulas should be understood as follows
\begin{equation}
\label{deriasslegspecial}
\hspace{-15pt}
\begin{split}
&\frac{d}{d\theta}\widehat  P_l^0(\cos\theta)=\frac{1}{2} \sqrt{l(l+1)} \Big(\widehat P_l^{-1}(\cos\theta)-\widehat  P_l^{1}(\cos\theta)\Big)=-\sqrt{l(l+1)}\widehat P_l^{1}(\cos\theta),\\
&\frac{d}{d\theta}\bar P_l^l(\cos\theta)=\sqrt{\frac{l}{2}}\widehat P_l^{l-1}(\cos\theta),
\end{split}
\end{equation}
where we used  \eqref{negativem} to derived the first identity.

To deal with ${\widehat  P_l^m(\cos\theta)}/{\sin\theta},$ we recall a second  recurrence formula  (cf. \cite{Abr.S84})
\begin{equation*}
\frac{1}{\sqrt{1-x^2}}P_l^m(x)=-\frac{1}{2m}\big((l-m+2)(l-m+1)P_{l+1}^{m-1}(x)+P_{l+1}^{m+1}(x)\big),\;\;\; m>0,
\end{equation*}
which, together with  \eqref{sphharmonics},   leads to
\begin{equation}\label{Plmsin}
\left\{
\begin{split}
& \frac{\widehat P_l^m(\cos\theta)}{\sin\theta}=\frac{1}{2m}\Big(\hat c_l^{m,1}\widehat P_{l+1}^{m-1}(\cos\theta)+\hat c_l^{m,2}\widehat P_{l+1}^{m+1}(\cos\theta)\Big),\\[4pt]
&\hat c_l^{m,1}=\sqrt{\frac{(2l+1)(l-m+1)(l-m+2)}{2l+3}},\quad \hat c_l^{m,2}=\sqrt{\frac{(2l+1)(l+m+1)(l+m+2)}{2l+3}}.
\end{split}\right.
\end{equation}

\vskip 10pt
\section{Algorithms for SPH and VSH expansions}\label{sect3:expan}
\setcounter{equation}{0}

In this section, we introduce the partition  and   analytic formulas for computing the integrals in \eqref{expan1}, and demonstrate the high accuracy of  high-mode expansions.
\subsection{Spectral-element grids on the sphere}\label{Sph31}
As already mentioned, the way of partitioning the spherical surface and the form of the associated elemental mapping
are essential for computing the SPH and VSH expansion coefficients.   Here, we adopt a partition so that we can  resort to  analytical formulas  to evaluate the integrals of interest as one-dimensional iterated integrals.
Basically,  let   $\mathbb{S}_h^2$ be a non-overlapping  partition of ${\mathbb S}^2$ such that each element $S^e\in\mathbb{S}_h^2$  is  a rectangular domain in  $(\theta,\varphi)$-coordinates, that is,  $S^e=[\theta_{s-1},\theta_s]\times[\varphi_{t-1}, \varphi_t]$ (see Figure \ref{sphericalcoordinates}).
%
Then we can choose the elemental mapping $\mathcal{F}_{e}: Q=[-1,1]^2\rightarrow S^e$  in the spectral-element discretization to be
\begin{equation}
\label{anotransform}
(\eta, \xi)\rightarrow (\theta^s(\eta),\varphi^t(\xi)):\;\; \theta^s(\eta)=\hat{\theta}_s\eta+\alpha_s,\;\;
\varphi^t(\xi)=\hat{\varphi}_t\xi+\beta_t,\;\;\; \xi,\eta\in [-1,1],
\end{equation}
where
\begin{equation}\label{geocons}
\hat{\theta}_s=\frac{\theta_s-\theta_{s-1}}{2}
\quad \alpha_s=\frac{\theta_{s-1}+\theta_s}{2},\;\;\;
\hat{\varphi}_t=\frac{\varphi_{t}-\varphi_{t-1}}{2},\;\;\;  \beta_t=\frac{\varphi_{t-1}+\varphi_t}{2},
\end{equation}
are constants determined by the ``vertices" of the element $S^e$. It is noteworthy that  the mapping  $\mathcal{F}_e$ is linear and smooth  from the reference coordinates $(\eta,\xi)$ to the spherical coordinates $(\theta,\varphi)$.

Let $\{(\eta_j,\xi_i)\}_{i, j=0}^N$ be the tensorial Legendre-Gauss-Lobatto (LGL) points in the reference square $Q$. The distributions of mapped LGL points in two typical kinds of physical elements are sketched in Figure \ref{LGLpoints}. The spectral-element approximations of a scalar and a vector field in \eqref{spectralelementapprox} take  the form:
\begin{align}
&u_N^E|_{S^e}=u_N^E(\mathcal F_e(\eta, \xi))=\sum\limits_{i=0}^N\sum\limits_{j=0}^Nu_{ij}^{e}\psi_{ij}(\mathcal F_e^{-1}(\theta, \varphi)), \label{scafieldseapp}
\\&\bs v_N^E|_{S^e}=\bs v_N^E(\mathcal F_e(\eta, \xi))=\sum\limits_{i=0}^N\sum\limits_{j=0}^N\bs v_{ij}^{e}\psi_{ij}(\mathcal F_e^{-1}(\theta, \varphi)), \label{vecfieldseapp}
\end{align}
where $\{u_{ij}^e\}$ and $\{\bs v_{ij}^e\}$ are nodal values on the mapped LGL points on $S^e,$ and $\psi_{ij}(\eta,\xi)=l_i(\xi)l_j(\eta)$ are the corresponding basis functions with $\{l_i(\xi), l_j(\eta)\}$ being the  Lagrange interpolating  basis polynomials  with respect to the LGL points (see, e.g., \cite{ShenTangWang2011}).
\begin{figure}[htbp]
\begin{center}
 \subfigure[LGL points on  $Q$]{ \includegraphics[scale=.23]{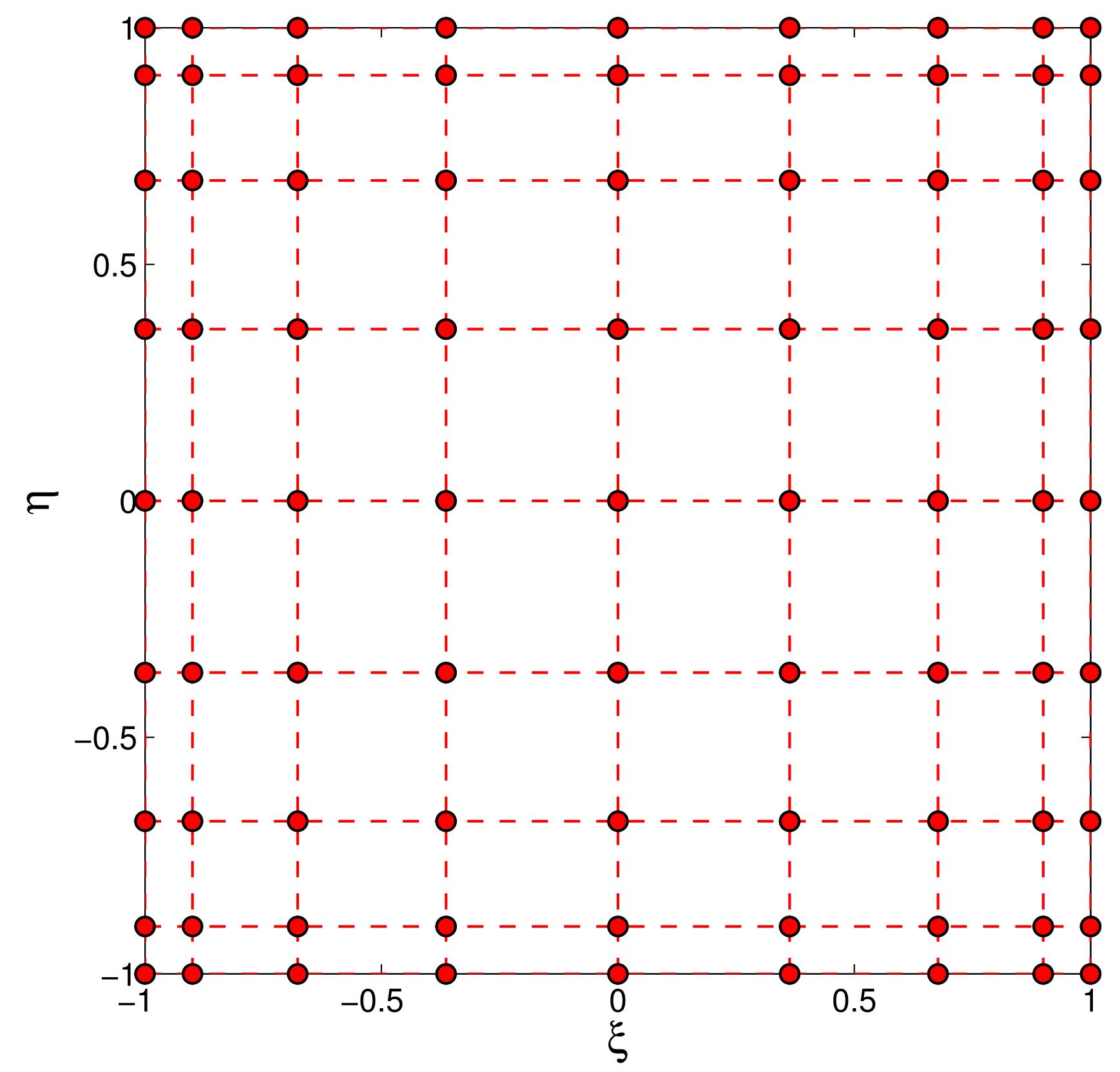}}\hspace*{20pt}
 \subfigure[Mapped LGL points on $S^e$]{\includegraphics[scale=.28]{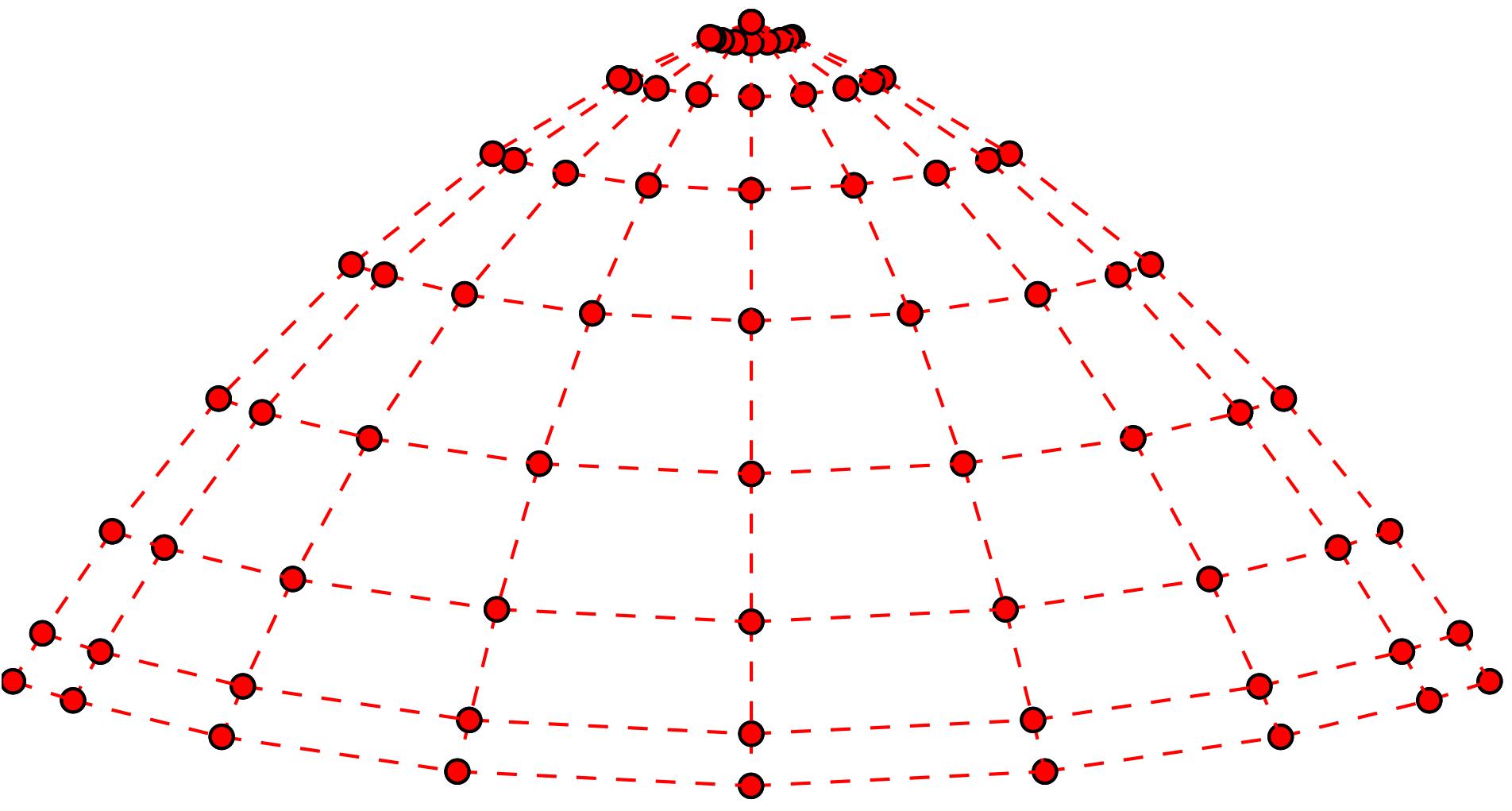}}\hspace*{20pt}
 \subfigure[Mapped LGL points on $S^e$]{ \includegraphics[scale=.28]{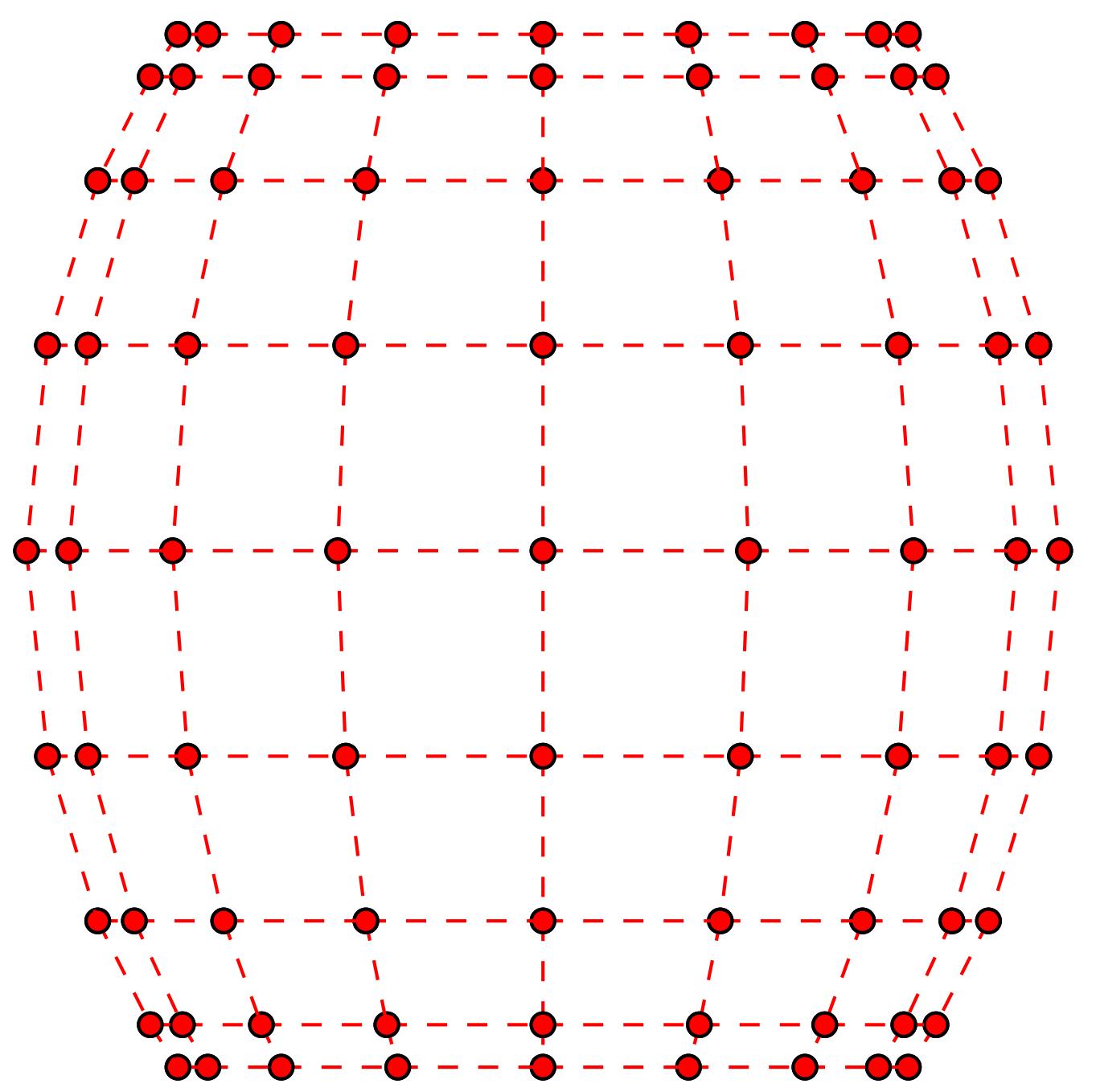}}
  \caption{\small Tensorial LGL points on the reference square and two typical kinds of physical elements with the mapping.}
\label{LGLpoints}
\end{center}
\end{figure}



\subsection{Accurate formulas for SPH expansions}
Observe from  \eqref{expan1} that  the computation of  spherical harmonic coefficients  is actually to calculate a bunch of integrals involving piecewise smooth and highly oscillatory  integrands on $\mathbb{S}^2$.
The naive use of usual quadrature rules does not lead to  very accurate results.
 In order to maintain the spectral accuracy obtained by the spectral element approximation within reasonable computational cost, we propose the following semi-analytical means.
\begin{proposition}\label{Prop31}
Given a spectral element approximation $u_N^E$ defined in \eqref{scafieldseapp}, its spherical harmonic expansion coefficients
can be computed by the formula
\begin{equation}
\label{scasphcoefformula}
\tilde a_l^m=\int_{\mathbb{S}^2}u_N^E(\theta,\varphi)\overline{Y_l^m(\theta,\varphi)}dS=\sum\limits_{e=0}^E\sum\limits_{i=0}^N\sum\limits_{j=0}^Nu_{ij}^e\mathfrak{a}_{m,i}^t\mathfrak{b}_{l, m,j}^s,
\end{equation}
where
\begin{equation}
\mathfrak{a}_{m,i}^t=\begin{cases}
2v_{i0}\,\hat{\varphi}_t\,e^{-\ri m\beta_t},  & m=0,\\
\hat{\varphi}_te^{-\ri m\beta_t}\sum\limits_{n=0}^N2 \ri^{-n} v_{in}\, j_n(m\hat{\varphi}_t), & m\neq 0,
\end{cases}\quad\mathfrak{b}_{l,m,j}^s=\hat{\theta}_s\sum\limits_{n=0}^Nv_{jn}\mathscr{Q}^n_{lm}(\hat{\theta}_s,\alpha_s),
\label{ablmijkanal}
\end{equation}
$\{v_{in}\}$ are the Legendre polynomial representation coefficients of Lagrange nodal basis $\{l_i\}$  in \eqref{representlag}, $j_n(z)$ is the spherical Bessel function of the first kind,  and
\begin{equation}
\label{blmjint2}
\mathscr{Q}^n_{lm}(\lambda,\rho):=\int_{-1}^1 P_n(x)\widehat P_l^m(\cos(\lambda x+\rho))\sin(\lambda x+\rho)\,dx,
\end{equation}
{\rm(}$P_n(x)$ is the Legendre polynomial of degree $n${\rm),} which can be computed by
  \eqref{analyticintformula4}.
\end{proposition}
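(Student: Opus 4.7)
The plan is to reduce the surface integral on $\mathbb S^2$ to a sum of products of two one-dimensional integrals on $[-1,1]$, each of which can be handled in closed form, so that the evaluation reduces to the coefficient formulas \eqref{ablmijkanal}. First I would split $\int_{\mathbb S^2}$ into a sum of elemental integrals $\sum_{e}\int_{S^e}$ and pull each piece back to $Q=[-1,1]^2$ via the affine map $\mathcal F_e$ from \eqref{anotransform}, whose Jacobian is the constant $\hat\theta_s\hat\varphi_t$. Because $\psi_{ij}(\eta,\xi)=l_i(\xi)l_j(\eta)$ factors and the conjugate harmonic factors as $\overline{Y_l^m}=\widehat P_l^m(\cos\theta)e^{-\ri m\varphi}$, while the surface measure splits as $dS=\sin\theta\,d\theta\,d\varphi$, the integrand on each $S^e$ separates completely into a $\xi$-factor times an $\eta$-factor. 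After interchanging the sums and the integrals, the elemental contribution becomes a tensor product, which immediately produces the factorization $u_{ij}^e\,\mathfrak a_{m,i}^t\,\mathfrak b_{l,m,j}^s$ claimed in \eqref{scasphcoefformula}.

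For the azimuthal ($\xi$) factor I would expand the Lagrange polynomial in the Legendre basis as $l_i(\xi)=\sum_n v_{in}P_n(\xi)$ (the representation \eqref{representlag} referenced in the statement), so that
\begin{equation*}
\mathfrak a_{m,i}^t=\hat\varphi_t e^{-\ri m\beta_t}\sum_{n=0}^N v_{in}\int_{-1}^1 P_n(\xi)\,e^{-\ri m\hat\varphi_t\xi}\,d\xi.
\end{equation*}
For $m\ne 0$, the inner integral is evaluated by the classical Bauer-type identity $\int_{-1}^1 P_n(x)e^{\ri zx}dx=2\ri^n j_n(z)$ with $z=-m\hat\varphi_t$; combining the parity $j_n(-z)=(-1)^n j_n(z)$ with $(-\ri)^n=\ri^{-n}$ recovers the nonzero-$m$ branch of $\mathfrak a_{m,i}^t$. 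For $m=0$, the inner integral collapses to $2\delta_{n0}$ by Legendre orthogonality, giving the $m=0$ branch. For the polar ($\eta$) factor, analogously writing $l_j(\eta)=\sum_n v_{jn}P_n(\eta)$ and pushing through the change of variables $\theta=\hat\theta_s\eta+\alpha_s$ produces
\begin{equation*}
\mathfrak b_{l,m,j}^s=\hat\theta_s\sum_{n=0}^N v_{jn}\int_{-1}^1 P_n(\eta)\,\widehat P_l^m(\cos(\hat\theta_s\eta+\alpha_s))\sin(\hat\theta_s\eta+\alpha_s)\,d\eta,
\end{equation*}
which is exactly $\hat\theta_s\sum_n v_{jn}\mathscr Q^n_{lm}(\hat\theta_s,\alpha_s)$ by the definition \eqref{blmjint2}.

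The bookkeeping above is essentially routine separation of variables; the only delicate points are the $m=0$ case for $\mathfrak a_{m,i}^t$ and the sign convention $\ri^{-n}$ versus $(-\ri)^n$ in the spherical-Bessel identity, both of which must be tracked carefully. The genuinely substantive content of the algorithm — obtaining an explicit, numerically stable formula for $\mathscr Q^n_{lm}(\lambda,\rho)$ — is \emph{not} part of this proposition and is postponed to the separate formula \eqref{analyticintformula4} cited in the statement; presumably it will be established by substituting the trigonometric representations \eqref{triform1}--\eqref{triform4} of $\widehat P_l^m(\cos\theta)$ into \eqref{blmjint2}, whereby the integrand reduces to products of $P_n$ with sines and cosines of linear arguments, and then invoking standard closed-form integrals of the type $\int_{-1}^1 P_n(x)\sin(ax+b)\,dx$ and $\int_{-1}^1 P_n(x)\cos(ax+b)\,dx$.
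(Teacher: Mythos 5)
Your proposal is correct and follows essentially the same route as the paper's proof: elementwise decomposition, exploiting that the affine map couples $\theta$ only to $\eta$ and $\varphi$ only to $\xi$ so the surface integral factors into two one-dimensional integrals, expansion of the Lagrange basis in Legendre polynomials, and evaluation of the azimuthal integral via the Legendre/spherical-Bessel identity (the paper uses $\int_{-1}^1 P_n(x)e^{-\ri \lambda x}dx=2\ri^{-n}j_n(\lambda)$ directly instead of your parity argument, an immaterial difference). The only cosmetic deviation is that the paper establishes the explicit formula \eqref{analyticintformula4} for $\mathscr{Q}^n_{lm}$ within the proof itself, exactly by the product-to-sum reduction of \eqref{triform1}--\eqref{triform4} that you sketch at the end, so your deferral of that step matches the paper's actual argument in substance.
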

\begin{proof}
According to the partition $\mathbb{S}_h^2$, we decompose the integral into
\begin{equation}
\label{coefscheme}
\tilde a_l^m=\int_{\mathbb{S}^2}u_N^E(\theta,\varphi)\overline{Y_l^m(\theta,\varphi)}dS
=\sum\limits_{e=1}^{E}\bs I_{lm}^{e},
\end{equation}
where
\begin{equation}
\label{ilme}
\bs I_{lm}^{e}:=\int_{S^e}
u_N^E(\theta,\varphi)\overline{Y_l^m(\theta,\varphi)}\sin\theta d\theta d\varphi=\int_{\varphi_{t-1}}^{\varphi_t}\int_{\theta_{s-1}}^{\theta_s}
u_N^E(\theta,\varphi)\overline{Y_l^m(\theta,\varphi)}\sin\theta d\theta d\varphi.
\end{equation}
Applying the elemental mapping  $\mathcal{F}_{e}$ yields
\begin{equation}
\label{sphcoefformula}
\bs I_{lm}^{e}=\int_{-1}^1\int_{-1}^1u_N^E(\theta^s(\eta),\varphi^t(\xi))\overline{Y_l^m(\theta^s(\eta),\varphi^t(\xi))}
\sin(\theta^s(\eta))\Big|\frac{D(\theta^s,\varphi^t)}{D(\xi,\eta)}\Big| d\xi d\eta.
\end{equation}
Here $\frac{D(\theta^s,\varphi^t)}{D(\xi,\eta)}$ is the Jacobian of the transformation $\mathcal{F}_e$ on patch $S^e$. A very important feature in the mapping $\mathcal{F}_{e}$ is that the angular variables $\theta$ and $\varphi$ are linearly dependent on the reference variables $\eta$ and $\xi$ independently  (cf. \eqref{anotransform}). Consequently, the surface integral \eqref{sphcoefformula} can be formulated into the product of two one-dimensional integrals as follows
\begin{equation}
\label{anopatchapprox}
\hspace{-20pt}\bs I_{lm}^{e}=\sum\limits_{i,j=0}^Nu_{ij}^{e}\Big(\hat{\varphi}_t\int_{-1}^{1}
l_i(\xi)e^{-\ri m\varphi^t(\xi)}d\xi\Big)\Big(\hat{\theta}_s\int_{-1}^{1}l_j(\eta)\widehat P_l^m(\cos\theta^s(\eta)) \sin(\theta^s(\eta))d\eta\Big).
\end{equation}
Next, we  deal with the above two integrals
\begin{equation}
\label{independentint}
\hspace{-10pt}\mathfrak{a}_{m,i}^t=\hat{\varphi}_t\int_{-1}^{1}l_i(\xi)e^{-\ri m\varphi^t(\xi)}d\xi,\quad
\mathfrak{b}_{l,m,j}^s=\hat{\theta}_s\int_{-1}^{1}l_j(\eta)\widehat P_l^m(\cos\theta^s(\eta)) \sin(\theta^s(\eta))d\eta,
\end{equation}
separately, so the formula \eqref{scasphcoefformula} is obtained by simply using \eqref{anopatchapprox} and \eqref{independentint}  in \eqref{coefscheme}.
More precisely, by the definition \eqref{anotransform},
\begin{align}
&\mathfrak{a}_{m,i}^t=\hat{\varphi}_te^{-\ri m\beta_t}\int_{-1}^1l_i(\xi)e^{-\ri m\hat{\varphi}_t\xi}d\xi,\label{seperateint1}\\
&\mathfrak{b}_{l,m,j}^s=\hat{\theta}_s\int_{-1}^{1}l_j(\eta)\widehat P_l^m\big(\cos(\hat{\theta}_s\eta+\alpha_s)\big) \sin(\hat{\theta}_s\eta+\alpha_s)d\eta.\label{seperateint2}
\end{align}
By \eqref{negativem}, we have
$$\mathfrak{a}_{-m,i}^t=\overline{\mathfrak{a}_{m,i}^t},\quad \mathfrak{b}_{l,-m,j}^s=(-1)^m\mathfrak{b}_{l,m,j}^s,\quad \hbox{for}\;\;m>0.$$
Therefore, we only need to consider the integrals \eqref{seperateint1}-\eqref{seperateint2} for $0\leq m\leq l$.

Note that the Lagrange nodal basis $\{l_i(\xi)\}_{i=0}^N$ can be represented in terms of Legendre polynomials $\{P_n(\xi)\}_{n=0}^N$ as follows (cf. \cite{ShenTangWang2011}): 
\begin{equation}
\label{representlag}
l_i(\xi)=\sum\limits_{n=0}^Nv_{in}P_n(\xi),\quad  2v_{in}=\begin{cases}
(2n+1)\omega_iP_n(\xi_i), & i=0, \cdots, n-1,\\
n\omega_iP_n(\xi_i), & i=n,
\end{cases}
\end{equation}
where $\{\xi_i, \omega_i\}_{i=0}^N$ are LGL quadrature nodes and weights.
Therefore, it suffices to derive analytic formulas for the integrals \eqref{seperateint1} and \eqref{seperateint2} with the Lagrange nodal basis functions replaced by Legendre polynomials. For this purpose, let us define
\begin{equation}\label{blmjint}
\begin{split}
&\mathscr{C}_n(\lambda,\rho):=\int_{-1}^1 P_n(x)\cos(\lambda x+\rho)dx,\quad \mathscr{S}_n(\lambda,\rho):=\int_{-1}^1 P_n(x)\sin(\lambda x+\rho)dx,
\\&\mathscr{P}^n_{lm}(\lambda,\rho):=\int_{-1}^1 P_n(x)\widehat P_l^m\big(\cos(\lambda x+\rho)\big)dx.
\end{split}
\end{equation}

We proceed with  the following important formula:
\begin{equation}\label{analyticintformula1}
\int_{-1}^1 P_n(x)e^{-\ri (\lambda x+\rho)}dx
=\begin{cases}
2e^{-\ri \rho}\delta_{n0}, & \lambda=0,\\
2\ri^{-n}e^{-\ri \rho}j_n(\lambda), & \lambda\neq 0,
\end{cases}
\end{equation}
which can be  derived from
\begin{equation}\label{newformula}
\int_{-1}^1 P_n(x)e^{-\ri \lambda x}dx
=\begin{cases}
\displaystyle 2\delta_{n0}, & \lambda=0,\\
\displaystyle 2\ri^{-n}j_n(\lambda), & \lambda\in\mathbb{C}\backslash\{0\},
\end{cases}
\end{equation}
  (cf. \cite{bateman1954tables}) straightforwardly.   Consequently, we obtain the explicit formulas for the first two integrals in
  \eqref{blmjint}:
  \begin{equation}
\label{analyticintformula2}
\hspace{-10pt}
\begin{split}
\mathscr{C}_n(\lambda,\rho)
=\begin{cases}
2\delta_{n0}\cos\rho, & \lambda=0,\\
2j_n(\lambda)(\mathfrak{Re}\{\ri^{-n}\}\cos\rho+\mathfrak{Im}\{\ri^{-n}\}\sin\rho), & \lambda\neq 0;
\end{cases}\\
\mathscr{S}_n(\lambda,\rho)
=\begin{cases}
2\delta_{n0}\sin\rho, & \lambda=0,\\
2j_n(\lambda)(\mathfrak{Re}\{\ri^{-n}\}\sin\rho-\mathfrak{Im}\{\ri^{-n}\}\cos\rho), & \lambda\neq 0.
\end{cases}
\end{split}
\end{equation}

With \eqref{analyticintformula1},  we can then compute the integral \eqref{blmjint2} and the last  integral in \eqref{blmjint} by using the trigonometric formulas \eqref{triform1}-\eqref{triform4}.  For example, if $l$ and $m$ are both even, then
\begin{equation*}
\hspace{-25pt}
\begin{split}
P_n(x)\widehat P_l^m(\cos(\lambda x+\rho))&=\sum\limits_{k=0}^{l/2} A_{lm}^kP_n(x)\cos(q_{k}(\lambda x+\rho)),\\
P_n(x)\widehat P_l^m(\cos(\lambda x+\rho))\sin(\lambda x+\rho)&=\frac{1}{2}\sum\limits_{k=0}^{l/2} A_{lm}^kP_n(x)[\sin(p_k(\lambda x+\rho))-\sin(p_{k-1}(\lambda x+\rho))],
\end{split}
\end{equation*}
where $p_k:=2k+1$, $q_k:=2k$ and $A_{lm}^k$ are coefficients given in \eqref{triform1}-\eqref{triform4}. Thus the exact formulas \eqref{analyticintformula2} can be used.
In summary,  we have 
\begin{equation}
\label{analyticintformula3}
\mathscr{P}^n_{lm}(\lambda,\rho)=\begin{cases}
\sum\limits_{k=0}^{l/2} A_{lm}^k\mathscr{C}_n(q_{k}\lambda,q_{k}\rho), & l\;\;\hbox{even};\;\; m\;\;\hbox{even},\\
\sum\limits_{k=1}^{l/2} A_{lm}^k\mathscr{S}_n(q_{k}\lambda,q_{k}\rho), & l\;\;\hbox{even};\;\; m\;\;\hbox{odd},\\
\sum\limits_{k=1}^{(l+1)/2} A_{lm}^k\mathscr{C}_n(p_{k-1}\lambda,p_{k-1}\rho), & l\;\;\hbox{odd};\;\; m\;\;\hbox{even},\\
\sum\limits_{k=1}^{(l+1)/2} A_{lm}^k\mathscr{S}_n(p_{k-1}\lambda,p_{k-1}\rho), & l\;\;\hbox{odd};\;\; m\;\;\hbox{odd},
\end{cases}
\end{equation}
and
\begin{equation}
\label{analyticintformula4}
\mathscr{Q}^n_{lm}(\lambda,\rho)=\begin{cases}
\dfrac{1}{2}\sum\limits_{k=0}^{l/2} A_{lm}^k[\mathscr{S}_n(p_{k}\lambda,p_{k}\rho)-\mathscr{S}_n(p_{k-1}\lambda,p_{k-1}\rho)], & l\;\;\hbox{even};\;\; m\;\;\hbox{even},\\
\dfrac{1}{2}\sum\limits_{k=1}^{l/2} A_{lm}^k[\mathscr{C}_n(p_{k-1}\lambda,p_{k-1}\rho)-\mathscr{C}_n(p_{k}\lambda,p_{k}\rho)],& l\;\;\hbox{even};\;\; m\;\;\hbox{odd},\\
\dfrac{1}{2}\sum\limits_{k=1}^{(l+1)/2} A_{lm}^k[\mathscr{S}_n(q_{k}\lambda,q_{k}\rho)-\mathscr{S}_n(q_{k-1}\lambda,q_{k-1}\rho)],& l\;\;\hbox{odd};\;\; m\;\;\hbox{even},\\
\dfrac{1}{2}\sum\limits_{k=1}^{(l+1)/2} A_{lm}^k[\mathscr{C}_n(q_{k-1}\lambda,q_{k-1}\rho)-\mathscr{C}_n(q_{k}\lambda,q_{k}\rho)],& l\;\;\hbox{odd};\;\; m\;\;\hbox{odd},
\end{cases}
\end{equation}
where $p_k:=2k+1$, $q_k:=2k$ and $A_{lm}^k$ are coefficients given in \eqref{triform1}-\eqref{triform4}.

Substituting the Legendre polynomial representation formula \eqref{representlag} of $\{l_i(\xi), l_j(\eta)\}$ and analytic formulas \eqref{analyticintformula1} and \eqref{analyticintformula4} into \eqref{seperateint1} and \eqref{seperateint2}, we obtain formulas \eqref{ablmijkanal}.
\end{proof}
\begin{remark}\label{oneremark}
In \eqref{analyticintformula4}, we have $\mathscr{S}_n(-\lambda,-\rho)$ when $l$ is even and $k=0$. That needs the computation of $j_n(-\lambda)$ with $\lambda>0$. The spherical Bessel function with negative arguments is usually not available in some popular library, e.g. GSL library. In fact, the calculation of $\mathscr{S}_n(-\lambda,-\rho)$ can be avoided. When $l$ is even and $k=0$, we have
$$\mathscr{S}_n(\lambda,\rho)-\mathscr{S}_n(-\lambda,-\rho)=2\int_{-1}^1P_n(x)\sin(\lambda x+\rho)dx=2\mathscr{S}_n(\lambda,\rho).$$
\end{remark}

\begin{remark}\label{byproduct}
It is noteworthy that starting from the spectral element approximation of a function on the sphere, the proposed algorithm for computing the spherical harmonic expansion coefficients does not induce additional errors.   Here,  we summarise the approach for computing the coefficients of a given smooth function $u$ {\color{blue}on the sphere $\mathbb{S}^2$} as follows: 
\begin{enumerate}
  \item[(i)] Construct a Cartesian partition $\mathbb{S}_h^2=\{S^e\}_{e=1}^E$ of $\mathbb{S}^2$ in the $\theta$-$\varphi$ plane;
  \item[(ii)] Approximate $u$ by its {\color{blue}LGL} interpolation $\mathcal{I}_N^Eu$ associated with the mesh $\mathbb{S}_h^2$;
  \item[(iii)] Compute the spherical harmonic coefficients of $\mathcal{I}_N^Eu.$    \qed
\end{enumerate}
\end{remark}


To have some insights into  the complexity of this approach, we arrange the nodal values $\{u_{ij}^e\}$ into  the matrix
$$
\bs U^e=\begin{bmatrix}
\bs U_0^e & \bs U_1^e & \cdots & \bs U_N^e
\end{bmatrix},
$$
where the column vectors are given by $\bs U^e_j=(u_{ij}^e)_{(N+1)\times 1}$.
Define the vectors $\bs A^t_m=(\mathfrak{a}_{m,i}^t)'$ and $\bs B^s_{lm}=(\mathfrak{b}_{l,m,j}^s)'$ where the prime denotes the transpose. Then, the matrix-vector form of \eqref{scasphcoefformula} is
\begin{equation}\label{matform}
\sum\limits_{i=0}^N\sum\limits_{j=0}^Nu_{ij}^e\mathfrak{a}_{m,i}^t\mathfrak{b}_{l, m,j}^s=((\bs A^t_m)'\bs U^e)\bs B^s_{lm}.
\end{equation}
It is important to remark that the vector $(\bs A^t_m)'\bs U^e$ is independent of $l$, so it can be used repeatedly  for different $l$. Assuming that we need to compute the coefficients $\{\tilde a_l^m\}_{l=0}^L$, we  have $L+1$ vectors $\{(\bs A^t_m)'\bs U^e\}_{m=0}^L$ to be formed, which requires $(L+1)\times (N+1)^2$ multiplications. With these vectors ready, $N+1$ multiplications are needed for computing every coefficient. From \eqref{representlag}, it is easy to see that the computational cost for $\{v_{in}\}$ is $(N+1)^2$.  Thus, once the values $\{v_{in}\}$ and  $\{\mathscr{Q}^n_{lm}(\hat{\theta}_s,\alpha_s)\}$ are computed off stage, the total computational cost is $((L+1)\times (N+1)^2+(L+1)^2\times (N+1))\times E$.

\subsection{Accurate formulas for VSH expansions}
In this section, we explore accurate formulas for computing VSH expansions of a vector field  $\bs v_N^E$ in \eqref{vecfieldseapp} given by  spectral-element nodal values. 
For convenience, we express the vector field $\bs v_N^E$ in the spherical coordinate basis $\{\bs e_r,\bs e_{\theta},\bs e_{\varphi}\}$, namely,
$$\bs v_N^E(\theta,\varphi)=u_N^E(\theta,\varphi)\bs e_r+v_N^E(\theta,\varphi)\bs e_{\theta}+w_N^E(\theta,\varphi)\bs e_{\varphi}.$$
Noting that  for any vector field $\bs v$ given  in the Cartesian coordinate basis:
$${\bs v}(\theta, \varphi)=v_x(\theta,\varphi)\bs e_x+v_y(\theta,\varphi)\bs e_y+v_z(\theta,\varphi)\bs e_z,$$
one can represent it in terms of  $\{\bs e_r, \bs e_{\theta}, \bs e_{\varphi}\}:$
$${\bs v}(\theta, \varphi)=
v_r(\theta,\varphi)\bs e_r+ v_{\theta}(\theta,\varphi)\bs e_{\theta}+ v_{\varphi}(\theta,\varphi)\bs e_{\varphi},$$
where the components are connected via
\begin{equation}
\begin{bmatrix}
v_r(\theta,\varphi)& v_{\theta}(\theta,\varphi) & v_{\varphi}(\theta,\varphi)
\end{bmatrix}=\begin{bmatrix}
v_x(\theta,\varphi)& v_y(\theta,\varphi) &v_z(\theta,\varphi))
\end{bmatrix}\mathbb T(\theta,\varphi),
\end{equation}
with the transformation matrix $\mathbb T(\theta,\varphi)$ given by
$$
\mathbb T(\theta,\varphi)=\begin{bmatrix}
\sin\theta\cos\varphi & \sin\theta\sin\varphi & \cos\theta\\
\cos\theta\cos\varphi & \cos\theta\sin\varphi & -\sin\theta\\
-\sin\varphi & \cos\varphi & 0
\end{bmatrix}.
$$

Denote by $\{{U}_{ij}^{e}, {V}_{ij}^{e}, {W}_{ij}^{e}\}$ the $\{\bs e_r,\bs e_{\theta},\bs e_{\varphi}\}$ components of the vector nodal values $\bs v_{ij}^e$ in \eqref{vecfieldseapp}.
\begin{proposition}
Given a spectral element approximation $\bs v_N^E$ in \eqref{vecfieldseapp}, we compute $\big\{\tilde v_{lm}^r, \tilde v_{lm}^{(1)}, \tilde v_{lm}^{(2)}\big\}$ in \eqref{expan1}  by
\begin{align}
&\tilde v_{lm}^r=\sum\limits_{e=1}^{E}
\sum\limits_{i=0}^{N}\sum\limits_{j=0}^{N}U_{ij}^{e}\,\mathfrak{a}_{m,i}^t\mathfrak{b}_{l,m,j}^s,\label{vecsphcoefformula1}\\
&\tilde{v}_{lm}^{(1)}=\frac{1}{l(l+1)}\sum\limits_{e=1}^{E}\sum\limits_{i=0}^{N}
\sum\limits_{j=0}^{N}\mathfrak{a}_{m,i}^t\big\{V_{ij}^{e}\mathfrak{d}_{l,m,j}^s-\ri mW_{ij}^{e}\mathfrak{c}_{l,m,j}^s\big\},\label{vecsphcoefformula2}\\
&\tilde{v}_{lm}^{(2)}=\frac{-1}{l(l+1)}\sum\limits_{e=1}^{E}\sum\limits_{i=0}^{N}
\sum\limits_{j=0}^{N}\mathfrak{a}_{m,i}^t\big\{\ri mV_{ij}^{e}\mathfrak{c}_{l,m,j}^s+W_{ij}^{e}\mathfrak{d}_{l,m,j}^s\big\},\label{vecsphcoefformula3}
\end{align}
where $\mathfrak{a}_{m,i}^t$ and $\mathfrak{b}_{l,m,j}^s$ are the same as in  \eqref{ablmijkanal},
\begin{equation}
\label{vecsphcoefcompform}
\begin{split}
&\mathfrak{c}_{l,m,j}^s=\hat{\theta}_s\mathscr{P}_{lm}^n(\hat{\theta}_s,\alpha_s),\quad \mathfrak{d}_{l,0,j}^s=-\sqrt{l(l+1)}\mathfrak{b}_{l,1,j}^s,\quad \mathfrak{d}_{l,l,j}^s=\sqrt{{l}/{2}}\mathfrak{b}_{l,l-1,j}^s,\\
&\mathfrak{d}_{l,m,j}^s=\tilde c_l^{m,1}\mathfrak{b}_{l,m-1,j}^s-\tilde c_l^{m,2}\mathfrak{b}_{l,m+1,j}^s,\quad m=1,\cdots, l-1,
\end{split}
\end{equation}
and $\mathscr{P}_{lm}^n$ is the same as in  \eqref{blmjint} {\rm(}which can be computed by \eqref{analyticintformula3}{\rm).}
\end{proposition}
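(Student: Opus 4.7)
The plan is to proceed in parallel with the proof of Proposition~\ref{Prop31}, exploiting the orthogonality of the local frame $\{\bs e_r,\bs e_\theta,\bs e_\varphi\}$ to decouple the three coefficients into scalar problems in the components $U_{ij}^e, V_{ij}^e, W_{ij}^e$, then applying the linear elemental mapping \eqref{anotransform} to reduce each surface integral to an iterated one-dimensional integral, and finally invoking the recurrences \eqref{deriassleg}--\eqref{Plmsin} to convert the new $\theta$-integrands into the already-tabulated building blocks $\mathfrak{b}$ and $\mathscr{P}_{lm}^n$.

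First I would dispose of $\tilde v_{lm}^r$. Since $\overline{\bs Y_l^m}=\overline{Y_l^m}\,\bs e_r$, the integrand collapses to $U_N^E\,\overline{Y_l^m}$, where $U_N^E$ denotes the scalar field on $\mathbb S^2$ with nodal values $\{U_{ij}^e\}$; Proposition~\ref{Prop31} applied to this scalar then yields \eqref{vecsphcoefformula1} verbatim. For $\tilde v_{lm}^{(1)}$ and $\tilde v_{lm}^{(2)}$, I would next substitute the definitions \eqref{vecpsilm}--\eqref{vecphilm} together with $Y_l^m=\widehat P_l^m(\cos\theta)e^{\ri m\varphi}$ into the integrals and take conjugates. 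Since $\widehat P_l^m$ is real, a direct calculation gives
\begin{align*}
\bs v_N^E\cdot \overline{\bs\Psi_l^m} &= e^{-\ri m\varphi}\Big(V_N^E\,\tfrac{d}{d\theta}\widehat P_l^m(\cos\theta)-\ri m\,W_N^E\,\tfrac{\widehat P_l^m(\cos\theta)}{\sin\theta}\Big),\\
\bs v_N^E\cdot \overline{\bs\Phi_l^m} &= -e^{-\ri m\varphi}\Big(\ri m\,V_N^E\,\tfrac{\widehat P_l^m(\cos\theta)}{\sin\theta}+W_N^E\,\tfrac{d}{d\theta}\widehat P_l^m(\cos\theta)\Big).
\end{align*}
The key observation is that multiplication by the surface element $\sin\theta\,d\theta\,d\varphi$ removes the apparent polar singularity: the $W$-term in $\tilde v_{lm}^{(1)}$ (and $V$-term in $\tilde v_{lm}^{(2)}$) becomes simply $\widehat P_l^m(\cos\theta)e^{-\ri m\varphi}$, while the $V$-term in $\tilde v_{lm}^{(1)}$ (and $W$-term in $\tilde v_{lm}^{(2)}$) acquires the factor $\tfrac{d}{d\theta}\widehat P_l^m(\cos\theta)\,\sin\theta$.

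After introducing the partition $\{S^e\}$ and the change of variables \eqref{anotransform}, the $\xi$-integral is identical to the one in Proposition~\ref{Prop31} and produces $\mathfrak{a}_{m,i}^t$. The $\eta$-integrals split into two flavours. The flavour carrying the weight $\sin\theta^s(\eta)$ against $\tfrac{d}{d\theta}\widehat P_l^m(\cos\theta^s(\eta))$ is rewritten, via the recurrence \eqref{deriassleg}, as a linear combination of $\widehat P_l^{m\pm 1}(\cos\theta^s(\eta))\sin\theta^s(\eta)$, each of which is precisely the integrand defining $\mathfrak{b}_{l,m\pm 1,j}^s$; the endpoint versions \eqref{deriasslegspecial} supply the one-term reductions at $m=0$ and $m=l$. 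This reproduces $\mathfrak{d}_{l,m,j}^s$ as displayed in \eqref{vecsphcoefcompform}. The other flavour has no $\sin\theta$ weight, so expanding $l_j(\eta)$ in Legendre polynomials through \eqref{representlag} and invoking the analytic formula \eqref{analyticintformula3} for $\mathscr{P}_{lm}^n$ produces $\mathfrak{c}_{l,m,j}^s$.

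Reassembling over $(e,i,j)$ and dividing by $\varpi_l = l(l+1)$ yields \eqref{vecsphcoefformula2}--\eqref{vecsphcoefformula3}. The main technical obstacle is bookkeeping rather than analysis: one must verify that the $1/\sin\theta$ singularity is genuinely absorbed by the surface measure \emph{before} the elemental map is applied (otherwise the patch integrals adjacent to the poles would be ill-defined), and then check that the endpoint cases $m=0$ and $m=l$ of \eqref{deriassleg} — which by \eqref{deriasslegspecial} collapse to a single term — match the special definitions of $\mathfrak{d}_{l,0,j}^s$ and $\mathfrak{d}_{l,l,j}^s$ with the correct constants $\sqrt{l(l+1)}$ and $\sqrt{l/2}$, and that the signs $\pm\ri m$ coming from $\partial_\varphi$ are correctly inherited by the conjugation.
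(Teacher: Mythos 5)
Your proposal is correct and follows essentially the same route as the paper's proof: treat the radial coefficient via Proposition \ref{Prop31}, expand $\bs v_N^E\cdot\overline{\bs\Psi_l^m}$ and $\bs v_N^E\cdot\overline{\bs\Phi_l^m}$ using \eqref{vecpsilm}--\eqref{vecphilm} so that the surface measure absorbs the $1/\sin\theta$ factor, reduce each element integral through the linear map \eqref{anotransform} to the building blocks $\mathfrak{a}_{m,i}^t$, $\mathfrak{c}_{l,m,j}^s$, $\mathfrak{d}_{l,m,j}^s$, and convert $\mathfrak{d}$ to combinations of $\mathfrak{b}$ via \eqref{deriassleg}--\eqref{deriasslegspecial} and $\mathfrak{c}$ to $\mathscr{P}_{lm}^n$ via \eqref{analyticintformula3}. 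The only detail you omit is the trivial remark that $\bs\Psi_0^0=\bs\Phi_0^0=\bs 0$ forces $\tilde v_{00}^{(1)}=\tilde v_{00}^{(2)}=0$, which is inessential.
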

\begin{proof}
By definition, we have  $\bs\Psi_0^0=\bs\Phi_0^0=\bs 0,$ so $\tilde v_{00}^{(1)}=\tilde v_{00}^{(2)}=0$. Moreover, using the definition of
 $\{\bs Y_l^m, \bs\Psi_l^m, \bs\Phi_l^m\}$ in \eqref{vecylm}-\eqref{vecphilm}, we obtain
\begin{equation}
\label{formula12}
\hspace{-20pt}
\begin{split}
\tilde v_{lm}^{r}&=\int_{\mathbb{S}^2}u_N^E\overline{Y_l^m}dS,\quad\tilde v_{lm}^{(1)}=\frac{1}{l(l+1)}\Big(\int_{\mathbb{S}^2}{v}_N^E\frac{\partial \overline{Y_l^{m}}}{\partial \theta}dS+\int_{\mathbb{S}^2}w_N^E\frac{\partial \overline{Y_l^{m}}}{\partial \varphi}\frac{1}{\sin\theta}dS\Big),\\
\tilde v_{lm}^{(2)}&=\frac{1}{l(l+1)}\Big(\int_{\mathbb{S}^2}{v}_N^E\frac{\partial \overline{Y_l^{m}}}{\partial \varphi}\frac{1}{\sin\theta}dS-\int_{\mathbb{S}^2}w_N^E\frac{\partial \overline{Y_l^{m}}}{\partial \theta}dS\Big).\\
\end{split}
\end{equation}
Therefore, we need to compute the following integrals
\begin{equation}
\label{vseint}
\begin{split}
&\int_{\mathbb{S}^2}u_N^E\overline{Y_l^m}dS,\quad  \int_{\mathbb{S}^2}v_N^E\frac{\partial \overline{Y_l^{m}}}{\partial \theta}dS, \quad \int_{\mathbb{S}^2}v_N^E\frac{\partial \overline{Y_l^{m}}}{\partial \varphi}\frac{1}{\sin\theta}dS,\\
&\int_{\mathbb{S}^2}w_N^E\frac{\partial \overline{Y_l^{m}}}{\partial \theta}dS, \quad \int_{\mathbb{S}^2}w_N^E\frac{\partial \overline{Y_l^{m}}}{\partial \varphi}\frac{1}{\sin\theta}dS,
\end{split}
\end{equation}
to obtain the vector spherical harmonic coefficients.

We have already derived an analytic formula for the first integral in Proposition \ref{Prop31}, i.e.,
\begin{equation*}
\tilde v_{lm}^r=\int_{\mathbb{S}^2}u_N^E\overline{Y_l^m}dS=\sum\limits_{e=1}^{E}
\sum\limits_{i=0}^{N}\sum\limits_{j=0}^{N}U_{ij}^{e}\mathfrak{a}_{m,i}^t\mathfrak{b}_{l,m,j}^s,
\end{equation*}
where $\mathfrak{a}_{m,i}^t$ and $\mathfrak{b}_{l,m,j}^s$ are given in  \eqref{ablmijkanal}.
For other integrals, we decompose them as
\begin{equation}
\label{vintegral}
\begin{split}
&\int_{\mathbb{S}^2}v_N^E\frac{\partial \overline{Y_l^{m}}}{\partial \theta}dS=\sum\limits_{e=1}^{E}\sum\limits_{i=0}^{N}\sum\limits_{j=0}^N
V_{ij}^{e}\int_{\varphi_{t-1}}^{\varphi_t}\int_{\theta_{s-1}}^{\theta_s}l_i(\xi)l_j(\eta)\frac{\partial \overline{Y_l^{m}}}{\partial \theta}\sin\theta d\theta d\varphi,\\
&\int_{\mathbb{S}^2}v_N^E\frac{\partial \overline{Y_l^{m}}}{\partial \varphi}\frac{1}{\sin\theta}dS=-\ri m\sum\limits_{e=1}^{E}\sum\limits_{i=0}^{N}\sum\limits_{j=0}^N
V_{ij}^{e}\int_{\varphi_{t-1}}^{\varphi_t}\int_{\theta_{s-1}}^{\theta_s}l_i(\xi)l_j(\eta)\overline{Y_l^m} d\theta d\varphi,
\end{split}
\end{equation}
and
\begin{equation}
\label{wintegral}
\begin{split}
&\int_{\mathbb{S}^2}w_N^E\frac{\partial \overline{Y_l^{m}}}{\partial \theta}dS=\sum\limits_{e=1}^{E}\sum\limits_{i=0}^{N}\sum\limits_{j=0}^N
W_{ij}^{e}\int_{\varphi_{t-1}}^{\varphi_t}\int_{\theta_{s-1}}^{\theta_s}l_i(\xi)l_j(\eta)\frac{\partial \overline{Y_l^{m}}}{\partial \theta}\sin\theta d\theta d\varphi,\\
&\int_{\mathbb{S}^2}w_N^E\frac{\partial \overline{Y_l^{m}}}{\partial \varphi}\frac{1}{\sin\theta}dS=-\ri m\sum\limits_{e=1}^{E}\sum\limits_{i=0}^{N}\sum\limits_{j=0}^N
W_{ij}^{e}\int_{\varphi_{t-1}}^{\varphi_t}\int_{\theta_{s-1}}^{\theta_s}l_i(\xi)l_j(\eta)\overline{Y_l^m} d\theta d\varphi.
\end{split}
\end{equation}
We see that they actually involve  two integrals
\begin{equation}
\label{vecseprateint}
\int_{\varphi_{t-1}}^{\varphi_t}\int_{\theta_{s-1}}^{\theta_s}l_i(\xi)l_j(\eta)\frac{\partial \overline{Y_l^{m}}}{\partial \theta}\sin\theta d\theta d\varphi,
\quad\int_{\varphi_{t-1}}^{\varphi_t}\int_{\theta_{s-1}}^{\theta_s}l_i(\xi)l_j(\eta)\overline{Y_l^m} d\theta d\varphi.
\end{equation}
Applying the mapping $\mathcal{F}_{e}$ and following the proof of Proposition \ref{Prop31}, we derive
\begin{equation*}
\begin{split}
&\int_{\varphi_{t-1}}^{\varphi_t}\int_{\theta_{s-1}}^{\theta_s}l_i(\xi)l_j(\eta)\overline{Y_l^m} d\theta d\varphi
=\mathfrak{a}_{m,i}^t\mathfrak{c}_{l,m,j}^s,\\
&\int_{\varphi_{t-1}}^{\varphi_t}\int_{\theta_{s-1}}^{\theta_s}l_i(\xi)l_j(\eta)\frac{\partial \overline{Y_l^{m}}}{\partial \theta}\sin\theta d\theta d\varphi=\mathfrak{a}_{m,i}^t\mathfrak{d}_{l,m,j}^s,
\end{split}
\end{equation*}
where $\mathfrak{a}_{m,i}^t$ is defined in \eqref{independentint} and
\begin{equation}
\label{clmjdlmj}
\begin{split}
&\mathfrak{c}_{l,m,j}^s:=\hat{\theta}_s\int_{-1}^{1}l_j(\eta)\widehat P_l^m(\cos(\theta(\eta)))d\eta,\\
&\mathfrak{d}_{l,m,j}^s:=\hat{\theta}_s\int_{-1}^{1}l_j(\eta)\frac{d}{d\theta}\widehat P_l^m(\cos(\theta(\eta)))\sin(\theta(\eta))d\eta.
\end{split}
\end{equation}
Then, using \eqref{vintegral}, \eqref{wintegral} and \eqref{clmjdlmj} in \eqref{formula12} we obtain \eqref{vecsphcoefformula2} and \eqref{vecsphcoefformula3}.

As in the calculation of $\mathfrak{b}_{l,m,j}^s$ in \eqref{independentint}, we derive analytic formulas for $\mathfrak{c}_{l,m,j}^s$ and $\mathfrak{d}_{l,m,j}^s$ by using very similar techniques. Recalling the definition \eqref{blmjint}, we have
\begin{equation}
\label{clmj}
\mathfrak{c}_{l,m,j}^s=\hat{\theta}_s\int_{-1}^{1}l_j(\eta)\widehat P_l^m(\cos(\hat{\theta}_s\eta+\alpha_s))d\eta=\hat{\theta}_s\mathscr{P}_{lm}^n(\hat{\theta}_s,\alpha_s).
\end{equation}
The integration $\mathfrak{d}_{l,m,j}^s$ can be reformulated into $\mathfrak{b}_{l,m,j}^s$ by using recurrence formulas \eqref{deriassleg} and \eqref{deriasslegspecial}. We first consider the special cases with $m=0, l$. From  \eqref{deriasslegspecial}, we have
\begin{equation}
\label{dlmj1}
\begin{split}
&\mathfrak{d}_{l,0,j}^s=-\sqrt{l(l+1)}\hat{\theta}_s\int_{-1}^{1}l_j(\eta)\widehat P_l^{1}(\cos(\theta^s(\eta)))\sin(\theta^s(\eta))d\eta=-\sqrt{l(l+1)}\mathfrak{b}_{l,1,j}^s,\\
&\mathfrak{d}_{l,l,j}^s=\sqrt{\frac{l}{2}}\hat{\theta}_s\int_{-1}^{1}l_j(\eta)\widehat P_l^{l-1}(\cos(\theta^s(\eta)))\sin(\theta^s(\eta))d\eta=\sqrt{\frac{l}{2}}\mathfrak{b}_{l,l-1,j}^s.
\end{split}
\end{equation}
Then for $0<m<l,$  inserting \eqref{deriassleg} into \eqref{clmjdlmj} yields
\begin{equation}
\label{dlmj2}
\begin{split}
\mathfrak{d}_{l,m,j}^s=&\hat{\theta}_s\int_{-1}^{1}l_j(\eta)\big[\tilde c_l^{m,1}\widehat P_l^{m-1}(\cos(\theta^s(\eta)))-\tilde c_l^{m,2}\widehat P_l^{m+1}(\cos(\theta^s(\eta)))\big]\sin(\theta^s(\eta))d\eta\\
=&\tilde c_l^{m,1}\mathfrak{b}_{l,m-1,j}^s-\tilde c_l^{m,2}\mathfrak{b}_{l,m+1,j}^s.
\end{split}
\end{equation}
Assembling the above formulas leads to the desired results.
\end{proof}

\begin{remark}\label{byproduct2} As with Remark  {\rm \ref{byproduct}}, the above algorithm applies to  the VSH expansion  of a given vector field $\bs v$ on sphere $\mathbb{S}^2$ with a spectral element approximation on the same partition $\mathbb{S}_h^2$.  \qed
\end{remark}

Observe from the above that the VSH expansion can be performed with a constant multiple of the cost for the SPH expansion.
To have more insights into this, we define
$$\bs V^e=\begin{bmatrix}
\bs V_0^e & \bs V_1^e & \cdots &  \bs V_N^e
\end{bmatrix}, \quad\bs V^e_j=\begin{bmatrix}
v_{0j}^e & v_{1j}^e & \cdots & v_{Nj}^e
\end{bmatrix}',$$
and likewise $\bs W^e=(\bs W^e_j)'$, $\bs W^e_j=(w_{ij}^e)'$, $\bs C^s_{lm}=(\mathfrak{c}_{l,m,j}^s)'$ and $\bs D^s_{lm}=(\mathfrak{d}_{l,m,j}^s)'.$
Then, we can formulate    \eqref{vecsphcoefformula1}-\eqref{vecsphcoefformula3} into the following matrix form:
\begin{equation*}
\begin{split}
\tilde v_{lm}^r&=\sum\limits_{e=1}^E((\bs A^t_m)'\bs U^e)\bs B^s_{lm},\quad
\tilde v_{lm}^{(1)}=\frac{1}{l(l+1)}\sum\limits_{e=1}^E\big\{((\bs A^t_m)'\bs V^e)\bs D^s_{lm}-\ri m((\bs A^t_m)'\bs W^e)\bs C^s_{lm}\big\},\\
\tilde v_{lm}^{(2)}&=\frac{-1}{l(l+1)}\sum\limits_{e=1}^E\big\{((\bs A^t_m)'\bs W^e)\bs D^s_{lm}+\ri m((\bs A^t_m)'\bs V^e)\bs C^s_{lm}\big\}.
\end{split}
\end{equation*}

\subsection{Accuracy test}
We now provide some illustrative numerical examples to show the high accuracy of the aforementioned method. For simplicity,
we consider the partition  ${\mathbb S}_h^2$ by equi-spaced latitude lines $\theta=s\pi/\mathcal{N},\; s=1, 2, \cdots, \mathcal{N}-1,$ and longitude lines $\varphi=2\pi t/\mathcal{M},\; t=0, 1, \cdots, \mathcal{M}-1$. We test the expansions of  the plane wave $u(\bs x)=e^{\ri k\hat{\bs k}\cdot\bs x}$ in SPH and the vector field
\begin{equation}\label{vfield}
\bs v(\bs x)=\nabla u+\nabla u\times \bs e_r=\ri k\Big(\hat{\bs k}+\frac{\hat{\bs k}\times\bs x}{|\bs x|}\Big) e^{\ri k\hat{\bs k}\cdot\bs x}\end{equation}
in VSH, where $k\hat{\bs k}$ is the propagation vector. Given a spherical surface ${\mathbb S}_{R}^2:=\{\bs x: |\bs x|=R\}$, the plane wave $u(\bs x)=e^{\ri k\hat{\bs k}\cdot\bs x}$ confined on ${\mathbb S}_{R}^2$ has the SPH expansion: 
\begin{equation*}\label{ubxtest}
u(\bs x)=\sum\limits_{l=0}^{\infty}\sum\limits_{m=-l}^{l}a_l^mY_l^m(\theta,\varphi)\;\;\hbox{with}\;\;a_l^m=4\pi\ri^lj_l(kR)\overline{Y_l^m}(\hat{\bs k}),
\end{equation*}
derived from the Funk-Hecke formula \cite[P. 72]{martin2006multiple}. Here, $(R,\theta,\varphi)$ is the spherical coordinate of $\bs x\in {\mathbb S}_{R}^2$. Thus, we find the VSH expansion of $\nabla u$ on ${\mathbb S}_{R}^2$ is
$$\nabla u(\bs x)=kj_0'(kR)\bs e_r+\sum\limits_{l=1}^{\infty}\sum\limits_{m=-l}^{l}4\pi\ri^l\overline{Y_l^m}(\hat{\bs k})\Big(kj_l'(kR)\bs Y_l^m(\theta,\varphi)+\frac{j_l(kR)}{R}\bs\Psi_l^m(\theta,\varphi)\Big).$$
Consequently,  the exact VSH expansion coefficients $\{v_{lm}^r, v_{lm}^{(1)}, v_{lm}^{(2)}\}$ of $\bs v(\bs x)$ on ${\mathbb S}_{R}^2$ are
\begin{equation*}
\{v_{lm}^r, v_{lm}^{(1)}, v_{lm}^{(2)}\}=4\pi\ri^l\overline{Y_l^m}(\hat{\bs k})\Big\{kj_l'(kR), \frac{j_l(kR)}{R}, \frac{j_l(kR)}{R}\Big\},\;\;\;\; l\geq 1,
\end{equation*}
and
$\big\{v_{00}^r, v_{00}^{(1)}, v_{00}^{(2)}\big\}=\big\{kj_0'(kR), 0, 0\big\}.$

Without loss of generality, we fix $\hat{\bs k}=(1, 1, 1)$,   and test several examples with various wavenumber  $k$. For the SPH expansion of the scalar $u(\bs x)$, we examine the error:
$$E_L(u):=\max_{|l|\leq L}\max_{|m|\leq l}|a_l^m-\tilde a_l^m|,$$
while the error:
$$E_L(\bs v):=\max_{|l|\leq L}\max_{|m|\leq l}\Big\{|v_{lm}^r-\tilde{v}_{lm}^r|, |v_{lm}^{(1)}-\tilde{v}_{lm}^{(1)}|,|v_{lm}^{(2)}-\tilde{v}_{lm}^{(2)}|\Big\},$$
is examined for the VSH expansion  of $\bs v(\bs x)$ in \eqref{vfield}.
We  depict the errors against $N$ (with fixed partition $\mathcal{N}=3$, $\mathcal{M}=4$), against the number of elements $N_S$ with
$\mathcal{N}=\mathcal{M}=N_S$ (and fixed $N=10$), and against the degree of freedom  (dof) of the spectral element approximations with $\mathcal{N}=\mathcal{M}=N$ in Figures \ref{sphexp01}-\ref{sphexp02}, respectively, for $R=1$, $L=20$ and $k=10, 20, 30$. We observe from the above figures an exponential decay of the errors as the number of points increases even for high mode expansions.  
\begin{figure}[!ht]
	\subfigure[$\mathcal{N}=3$, $\mathcal{M}=4$ fixed]{\includegraphics[scale=.25]{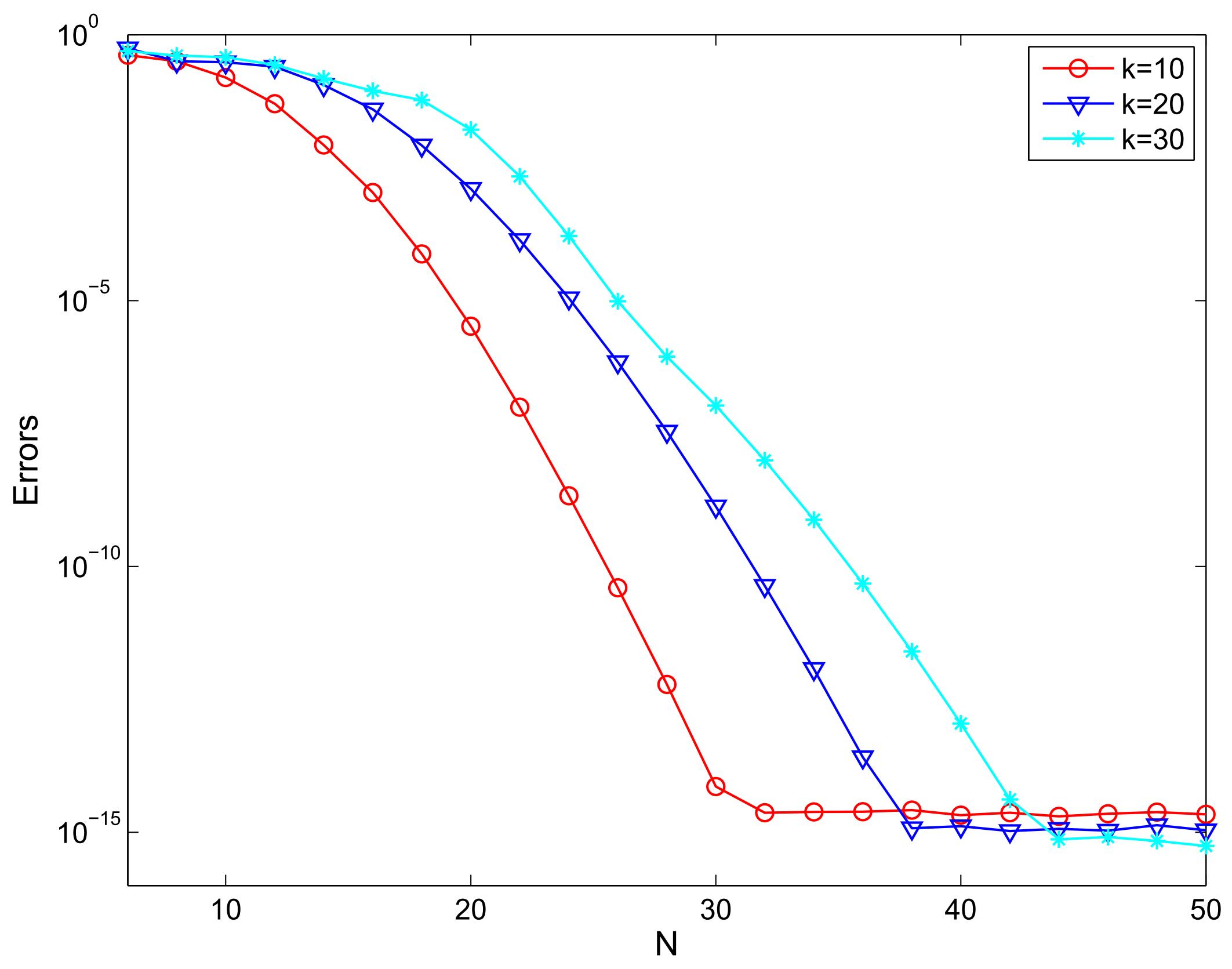}}\quad
	\subfigure[$N=10$ fixed, $\mathcal{N}=\mathcal{M}=N_S$]{\includegraphics[scale=.25]{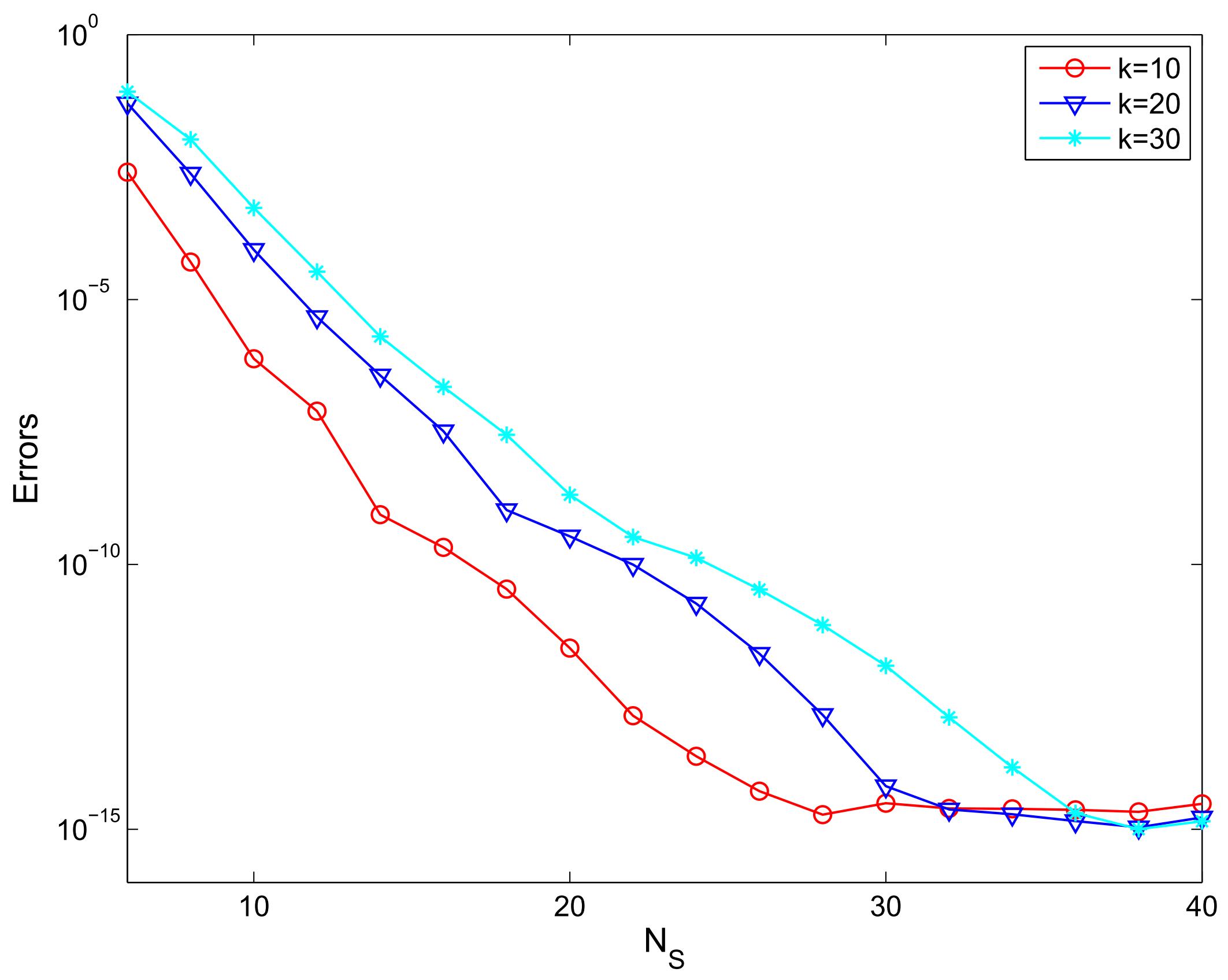}}\quad
	\subfigure[$\mathcal{N}=\mathcal{M}=N$]{\includegraphics[scale=.25]{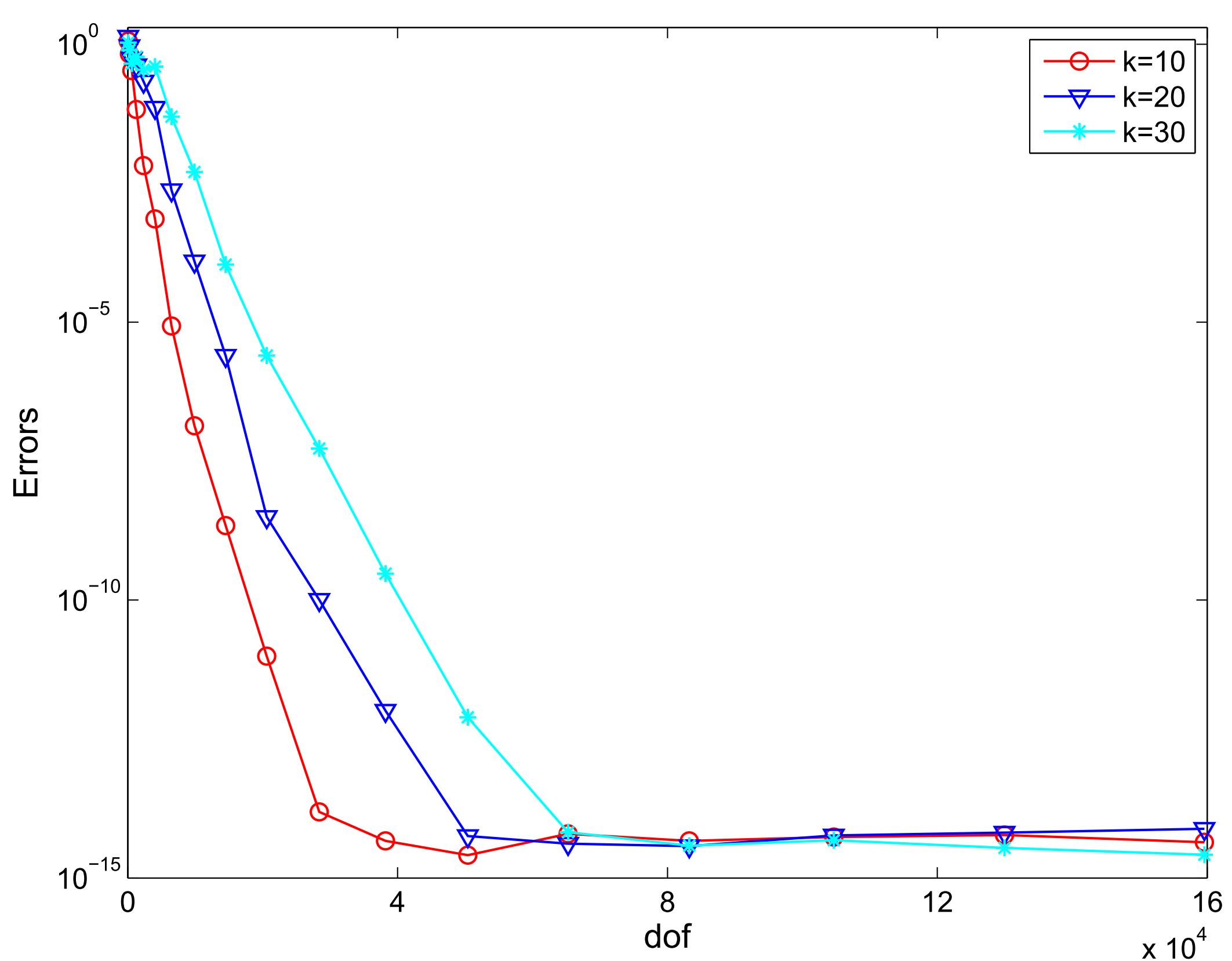}}
	\caption{\small SPH expansion errors $E_{20}(u)$ against $N$, $N_S$ and $dof$ respectively.} \label{sphexp01}
\end{figure}
\begin{figure}[!ht]	
	\subfigure[$\mathcal{N}=3$, $\mathcal{M}=4$ fixed]{\includegraphics[scale=.25]{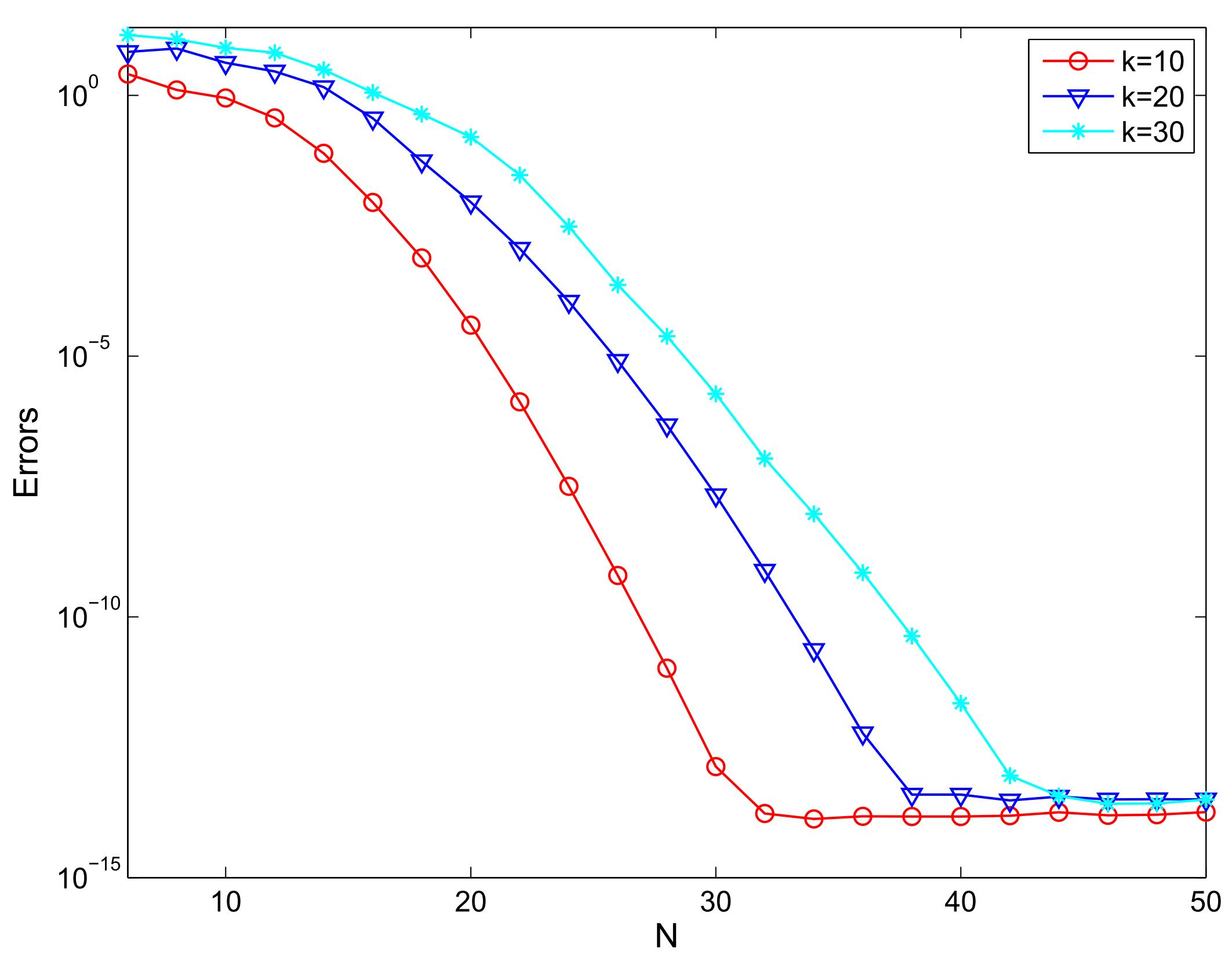}}\quad
	\subfigure[$N=10$ fixed, $\mathcal{N}=\mathcal{M}=N_S$]{\includegraphics[scale=.25]{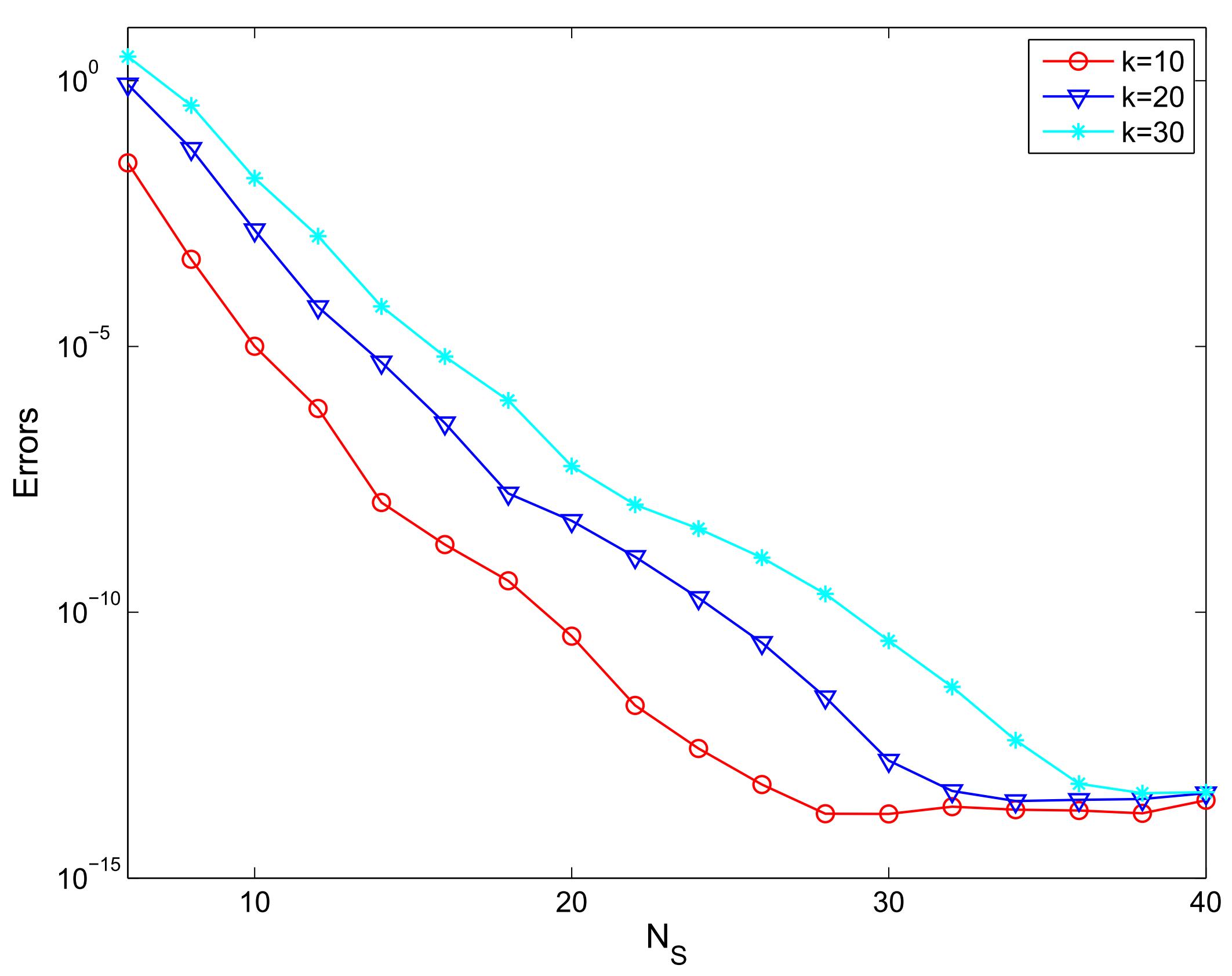}}\quad
	\subfigure[$\mathcal{N}=\mathcal{M}=N$]{\includegraphics[scale=.25]{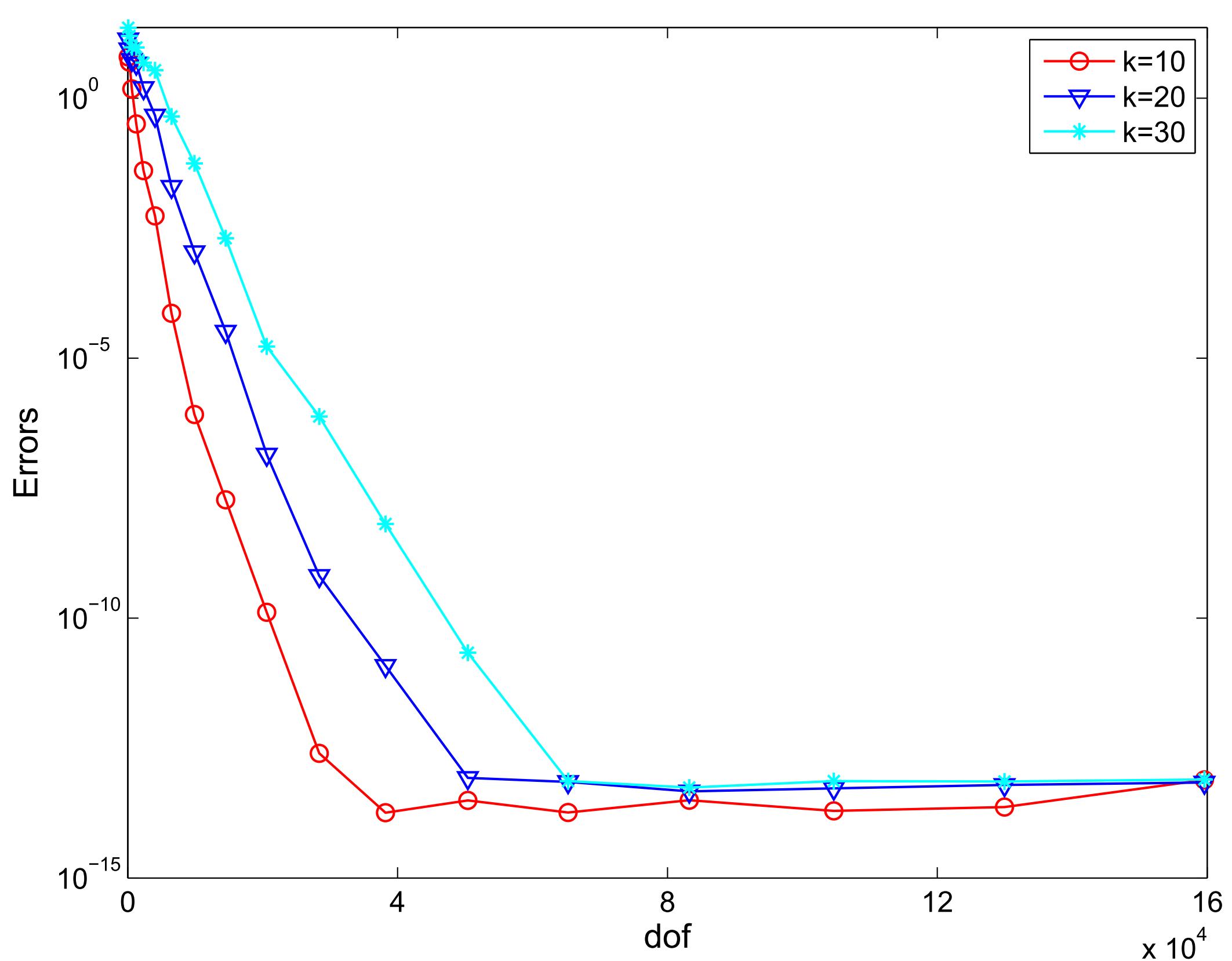}}
	\caption{\small VSH expansion errors $E_{20}(\bs v)$ against $N$, $N_S$ and $dof$ respectively. } \label{sphexp02}
\end{figure}

One important feature of the proposed algorithm is that it can produce highly accurate expansion for large  wave number.  If the given spectral element approximation is accurate enough, the algorithm can calculate arbitrary high mode coefficients without inducing additional errors. We now provide a numerical example to show the capability of the presented algorithm in the computation of high modes. Consider again the spherical harmonic expansion of the plane wave $u(\bs x)=e^{\ri k\hat{\bs k}\cdot\bs x}$. Here, we take $\hat{\bs k}=(1, 0, 0)$ and $k=100, 200,$ respectively. We fix the partition $\mathcal{N}=3, \mathcal{M}=4$ and use polynomials of degree $N=120$ and $N=220$ for spectral element approximation. According to our calculation, the spectral element approximation errors for $e^{\ri k\hat{\bs k}\cdot\bs x}$ are $7.4847e$-$14$ and $1.55806e$-$13$, respectively.  Define $\tilde{a}_{l}=\max\limits_{0\leq |m|\leq l}|\tilde{a}_{l}^m|,$
and list the results in Table \ref{highwavenumbertab}.  We see that it is sufficient to set $L=145, 260$ for $k=100, 200$ to achieve machine accuracy as shown in Table \ref{highwavenumbertab}, which also indicates the high efficiency of the proposed algorithm.
\begin{table}[h]
	\caption{Algorithm performance for high wave number.}
	\begin{center}
		\begin{tabular}{|c|c|c|c|c|c|c|c|}
			\hline
			\multicolumn{4}{|c|}{$k=100$} & \multicolumn{4}{|c|}{$k=200$}\\
			\hline
			$l$& $\tilde{a}_l$ & $E_{145}(u)$ & CPU time (s)  & $l$ & $\tilde{a}_l$& $E_{260}(u)$ & CPU time (s)\\
			\hline
			141& 1.1973e-12  & \multirow{5}*{1.8241e-14} & \multirow{5}*{1.48} & $256$ & 3.0841e-14& \multirow{5}*{3.8124e-14} & \multirow{5}*{8.24}\\
			\cline{1-2}\cline{5-6}
			142& 4.9071e-13  & &  & $257$ & 1.5661e-14& &\\
			\cline{1-2}\cline{5-6}
			143& 1.9962e-13  & &  & $258$ & 6.8950e-15& &\\
			\cline{1-2}\cline{5-6}
			144& 8.0668e-14  & &  & $259$ & 4.7365e-15& &\\
			\cline{1-2}\cline{5-6}
			145& 3.1744e-14  & &  & $260$ & 2.0108e-15& &\\
			\hline
		\end{tabular}
	\end{center}
	\label{highwavenumbertab}
\end{table}

\section{Applications to wave scattering simulation}\label{sect4:appl}
\setcounter{equation}{0}

In this section, we   solve  time-harmonic wave scattering problems numerically in several scenarios,  where the proposed algorithm for SPH and VSH expansions play an important part.  We start with a relatively simple situation of computing (far-field) acoustic and electromagnetic waves scattered by a spherical scatterer, where much care must be devoted to properly dealing with the involved ratio of the Hankel functions.  
We then   consider an irregular scatterer by   integrating the above far-field computation  with 
the  transformed field expansion (TFE) method \cite{nicholls2006a,fang2007a}, and the spherical Dirichlet-to-Neumann (DtN) transparent boundary condition. Finally, we extend the approach to multiple scatterers.   
 

%
%
%


\subsection{Computing scattering waves by a single spherical scatterer}   
\label{singlesphscat}
Consider the time-harmonic wave scattering  governed by 
\begin{equation}\label{scatterprob}
\left\{
\begin{split}
& \Delta u+k^2 u=0,\quad  r>b, \\
& u|_{r=b}=u_N,\quad \frac{\partial u}{\partial r}-{\rm i}ku=O\Big(\frac{1}{r}\Big),\quad \text{as}\;\; r\rightarrow\infty,
\end{split}\right.
\end{equation}
and
\begin{subequations}\label{extthmaxwellsecondorder}
	\begin{numcases}{}
	{\nabla\times\nabla\times} {\bs E}-k^2 {\bs E}=\bs 0,  \qquad\qquad \;\;\; \text{in } \;\;  r>b \\
	{\bs E}_S=\hat{\bs x}\times ({\bs E}_N\times \hat{\bs x}),\qquad\qquad\quad\,\, \hbox{on}\;\;r=b,\label{scatterbc}\\
	\lim_{r\to \infty} r\big(\,\nabla\times\bs E\times \bs{\hat x}-\ri k \bs E\big)=\bs 0,      \label{silvermullerbc}
	\end{numcases}
\end{subequations}
with a spherical scatterer $B(b)$ (note: $B(b)$ is a ball of radius $b$), where $\hat{\bs x}:=\bs x/|\bs x|$, $k$ is the wave number,  $\bs E_S=\hat{\bs x}\times ({\bs E}_N\times \hat{\bs x})$ (note: $\hat{\bs x}$ unit outward normal of $B(b)$) is the tangential component of $\bs E$, and the boundary data $u_N$ and ${\bs E}_N$ are assumed to be spectral element approximations resulted from given incident waves or provided by other numerical solvers. For acoustic scattering problem \eqref{scatterprob}, sound soft boundary condition on the scatter $B(b)$ and Sommerfeld radiation condition at the infinity are used. On the other hand, the perfect conduct boundary condition on $B(b)$ and the Silver-Muller radiation condition at infinity are imposed  for the electromagnetic scattering problem \eqref{extthmaxwellsecondorder}. The exterior solver presented below can be combined with appropriate interior solver (cf \cite{acosta2010coupling}) to solve multiple scattering problems with irregular scatterers.

%

Using  the separation variable method together with SPH and VSH expansion (cf. \cite{Nede01}) leads to approximations:
\begin{equation}
\label{approxsolution}
u_L=\sum\limits_{l=0}^{L}\sum\limits_{m=-l}^{l}\frac{\widehat{U}_{N,l}^m}{h_l^{(1)}(kb)}\psi_{lm}(\bs x),\;\;\; \psi_{lm}(\bs x):=h_l^{(1)}(kr)Y_l^m(\theta,\varphi),
\end{equation}
and
\begin{equation}
\label{EHsoluA1}
\begin{split}
{\bs E}_L= &\sum_{l=1}^L\sum_{|m|=0}^l\bigg\{ -\frac{l(l+1)B_l^m}{{\rm i}k}\frac{h_l^{(1)}(kr)}{r} \bs Y_l^m-\frac{B_l^mZ_l(kr)}{{\rm i}k r} \bs\Psi_l^m+A_l^mh_l^{(1)}(kr) \bs\Phi_l^m \bigg\}, \\
{\bs H}_L = &\sum_{l=1}^L\sum_{|m|=0}^l\bigg\{ \frac{l(l+1)A_l^m}{{\rm i}k}\frac{h_l^{(1)}(kr)}{r} \bs Y_l^m
+\frac{A_l^mZ_l(kr)}{{\rm i}k r} \bs\Psi_l^m +B_l^mh_l^{(1)}(kr) \bs\Phi_l^m\bigg\},
\end{split}
\end{equation}
where $L$ is the cut-off number, $\{\widehat{U}_{N,l}^m\}$ are the SPH coefficients of $u_N$ on the spherical surface $r=b$, $\{\psi_{lm}(\bs x)\}$ is the outgoing spherical wave functions (cf. \cite{martin2006multiple}), $\{h_l^{(1)}(z)\}$ are the spherical Hankel functions of the first kind, 
\begin{equation}\label{Zlzque}
Z_l(z):=h_l^{(1)}(z)+zh_l^{(1)'}(z),
\end{equation} 
and the coefficients $\{A_l^m, B_l^m\}$ need to be determined by matching the boundary data on $r=b$.
Now, we use the algorithm proposed in Section \ref{sect3:expan}  to compute the SPH coefficients $\{\widehat{U}_{N,l}^m\}$ and the approximate VSH expansion
\begin{equation}
\label{tanbcdata}
\hat{\bs x}\times ({\bs E}^s_N\times \hat{\bs x})=\sum_{l=1}^L\sum_{|m|=0}^l\big\{
V_{N,l}^{m} \bs\Psi_l^m+W_{N, l}^{m} \bs\Phi_l^m \big\}.
\end{equation}
Then matching the boundary condition \eqref{scatterbc} leads to
\begin{equation*}
A_l^m=\frac{W_{N, l}^{m}}{h_l^{(1)}(kb)},\quad B_l^m=-\frac{\ri k b\, V_{N, l}^{m}}{Z_l(kb)}.
\end{equation*}
Substituting it into \eqref{EHsoluA1}, we obtain 
\begin{align}
{\bs E} _L= &\sum_{l=1}^L\sum_{|m|=0}^l\bigg\{ b\varpi_l V_{N, l}^{m}\frac{h_l^{(1)}(kr)}{rZ_l(kb)} \bs Y_l^m
+b V_{N, l}^{m}\frac{Z_l(kr)}{rZ_l(kb)} \bs\Psi_l^m+W_{N, l}^{m}\frac{h_l^{(1)}(kr)}{h_l^{(1)}(kb)} \bs\Phi_l^m \bigg\}, \label{EsoluA1}\\
{\bs H} _L= &\sum_{l=1}^L\sum_{|m|=0}^l\bigg\{ \frac{\varpi_lW_{N, l}^{m}}{{\rm i}k }\frac{h_l^{(1)}(kr)}{rh_l^{(1)}(kb)} \bs Y_l^m
+\frac{W_{N, l}^{m}}{{\rm i}k }\frac{Z_l(kr)}{rh_l^{(1)}(kb)} \bs\Psi_l^m -\ri k b V_{N, l}^{m}\frac{h_l(kr)}{Z_l(kb)} \bs\Phi_l^m\bigg\}. \label{HsoluA1}
\end{align}
We reiterate  that the error in the approximations \eqref{approxsolution} and \eqref{EsoluA1}-\eqref{HsoluA1} only comes from the mode of truncation because the computation of the SPH coefficients $\widehat{U}_{N,l}^m$ and the VSH coefficients $\{V_{N,l}^m, W_{N,l}^m\}$ does not induce any additional error that exceeds accumulated computer round-off error.

One challenge for computing the far-field scattering waves lies in that the naive calculation of the ratio ${\widehat{U}_{N,l}^m}/{h_l^{(1)}(kb)}$ is not stable in performing the superposition
\eqref{approxsolution}.  This is due to the ``bad"  asymptotic behaviour of $j_{l}(z)$ and $y_{l}(z)$ (cf. \cite{Abr.S84}):
\begin{equation}
\label{besselasymptotic}
j_{l}(z)\sim\frac{1}{2l+1}\sqrt{\frac{1}{2\pi z}}\Big(\frac{ez}{2l+1}\Big)^{l+\frac{1}{2}},\quad y_{l}(z)\sim-\frac{1}{2l+1}\sqrt{\frac{8}{\pi z}}\Big(\frac{ez}{2l+1}\Big)^{-l-\frac{1}{2}},\quad l\gg 1,
\end{equation}
which induces numerical underflow/overflow  in computing  the ratio ${\widehat{U}_{N,l}^m}/{h_l^{(1)}(ka)}$ and $\psi_{lm}(\bs x)$   for large $l$.  Therefore, the usual approach for the direct computation of the scattering field works  only for
lower mode $L$ and small $r$.  In order to overcome this obstacle, we rearrange the terms in \eqref{approxsolution} as
\begin{equation}
\label{approxsolution1}
u_L=\sum\limits_{l=0}^{L}\sum\limits_{|m|=0}^{l}\widehat{U}_{N,l}^mR_l(r)Y_l^m(\theta,\varphi),\;\; \hbox{where}\;\;R_l(r):=\frac{h_l^{(1)}(kr)}{h_l^{(1)}(kb)}.
\end{equation}
It is important to point out that  the  ratio $R_l(r)$ is well behaved for all $l$ and $r\geq b$, which can be evaluated efficiently as follows.
For this purpose, we reformulate the ratio as
\begin{equation}\label{ratioode2}
R_l'(r)-k\rho_l(kr)R_l(r)=0, \;\;\;  r>b;  \quad R_l(b)=1,
\end{equation}
where
\begin{equation}
\label{logarithmic}
\rho_l(z):=\frac{h_{l}^{(1)'}(z)}{h_{l}^{(1)}(z)}=\frac{d}{dz}\log h_{l}^{(1)}(z),\quad z>0.
\end{equation}
Equivalently, we have
\begin{equation}
\label{ratiohankel}
R_l(r)=\exp\Big(k \int_b^r\rho_l(k\xi)d\xi\Big),\quad l\ge 0,\;\; r>b.
\end{equation}
\begin{remark}\label{rholk} Recall the properties of $\rho_l(z)$ {\rm(}cf.  \cite{Nede01,She.W07}{\rm)}:
\begin{equation*}
\rho_0(z)=-\frac 1 z+\ri; \;\;\; -\frac {l+1} z\le{\rm Re}(\rho_l(z))\le -\frac 1 {z}, \quad  0<{\rm Im}(\rho_l(z))\le 1,
\end{equation*}
for all $l\ge 1$ and $z>0,$ and  ${\rm Im}(\rho_l(z))$ becomes exponentially small for large $l.$ This implies  $|R_l(r)|\le e^{b-r}$ for
$r>b$ and all $l\ge 1.$ \qed
\end{remark}

Note that $\rho_l$ can be evaluated  recursively and stably  by (see, e.g., \cite{alpert2000rapid})
\begin{equation}\label{logarithmicderi}
\rho_l(z)=\frac{z}{l-1-z\rho_{l-1}(z)}-\frac{l+1}{z}, \quad l\ge 1;   \;\;\; \rho_0(z)=-\frac 1 z+\ri.
\end{equation}
With this, we can use a  suitable (composite) quadrature rule  and evaluate the ratio \eqref{ratiohankel} in a very accurate manner.

The above algorithm also applies to the computation of \eqref{EsoluA1}-\eqref{HsoluA1}. Indeed, except for  $R_l(r)$  in \eqref{approxsolution}, we also need to deal with
\begin{align}
\tilde R_l(r):=\frac{Z_l(kr)}{Z_l(kb)}=\frac{h_l^{(1)}(kr)+krh_l^{(1)'}(kr)}{h_l^{(1)}(kb)+kah_l^{(1)'}(kb)}
=\frac{R_l(r)+rR'_l(r)}{1+kb\rho_l(b)},\label{Rla}\\
\breve R_l(r):=\frac{h^{(1)}_l(kr)}{Z_l(kb)}=\frac{h_l^{(1)}(kr)}{h_l^{(1)}(kb)+kbh_l^{(1)'}(kb)}
=\frac{R_l(r)}{1+kb\rho_l(b)},\label{Rlb}
\end{align}
where we used \eqref{Zlzque},  and $\rho_l(r)$ is defined in \eqref{logarithmic}. Therefore, the ratios $\tilde R_l$ and $\breve R_l$ can be computed by using the ratios $R_l(r)$ and $\rho_l(r)$ accurately.

\vskip 2pt

We provide some numerical illustrations of the above algorithm. 
 For the acoustic scattering problem, we take the incident wave $u_N$ to be the spectral-element interpolation of the plane wave $e^{\ri k\hat{\bs k}\cdot\bs x}$, and the spherical wave $e^{\ri k|\bs x-\bs x_0|}$, where $\bs x_0$ is the center of the spherical wave and $\hat{\bs k}$ is the propagation vector. For electromagnetic scattering test, we take $\bs E_N$ to be the spectral element interpolation of the plane wave $(e^{\ri kz}, e^{\ri kz}, 0)^{\rm T}$. In our example, we set $\hat{\bs k}=(1,0,0)$, $\bs x_0=(1, 0, 0)$, $k=40$, $B(b)$ be the ball centered at origin with radius $b=0.25$.  For the spectral element interpolation, we use a uniform $3\times 4$ mesh in the $\theta$-$\varphi$ plane for the spherical surface $r=b$ and set $N=50$ to obtain machine accuracy. According to our computation, the interpolation errors for $e^{\ri k\hat{\bs k}\cdot\bs x}$, $e^{\ri k|\bs x-\bs x_0|}$ and vector field $(e^{\ri kz}, e^{\ri kz}, 0)^{\rm T}$ in a discrete maximum norm are $1.1233e$-$14$, $2.90127e$-$14$ and $1.3110e$-$14$, respectively. Define
$$\widehat{U}_{N,l}=\max\limits_{0\leq |m|\leq l}|\widehat{U}_{N,l}^m|\quad{V}_{N,l}=\max\limits_{0\leq |m|\leq l}|{V}_{N,l}^m| \quad {W}_{N,l}=\max\limits_{0\leq |m|\leq l}|{W}_{N,l}^m|,$$
and list the computed values from $l=30$ to $l=35$ in Table \ref{tabcubic1}. These results show that it is enough to set the truncation mode $L=35$ for $k=40$.
We depict the  scattering waves in Figure \ref{singlescatter01-01} and Figure \ref{emspherescatter01-03}.

\begin{table}[h]
	\caption{Magnitude of spherical harmonic coefficients (single spherical scatterer).}
	\begin{center}
		\begin{tabular}{|c|c|c|c|c|}
			\hline
            \multirow{2}{*}{$l$} & \multicolumn{2}{|c|}{$\widehat{U}_{N,l}$} & \multirow{2}{*}{${V}_{N,l}$} & \multirow{2}{*}{${W}_{N,l}$}\\
			\cline{2-3}
			& plane incident wave   & spherical incident wave & &\\
			\hline
			30 &  2.2271e-12  &  2.6243e-12    & 4.7366e-13 & 1.6125e-13  \\
			\hline
			31 &  3.6545e-13  &  4.3957e-13    & 7.8593e-14  &  2.5827e-14  \\
			\hline
			32 &  5.8040e-14  &  7.1058e-14    & 1.2605e-14  &  3.9934e-15    \\
			\hline
			33 &  8.4522e-15  &  1.1510e-14    & 1.9629e-15  &  6.0101e-16   \\
			\hline
			34 &  1.3907e-15  &  1.8552e-15    & 2.9224e-16  &  8.0510e-17   \\
			\hline
			35 &  2.5900e-16  &  6.2305e-16    & 3.2677e-17  &  7.3894e-18    \\
			\hline		
		\end{tabular}
	\end{center}
	\label{tabcubic1}
\end{table}

\begin{figure}[!ht]
\begin{center}
	\subfigure[Real part]{\includegraphics[scale=0.14]{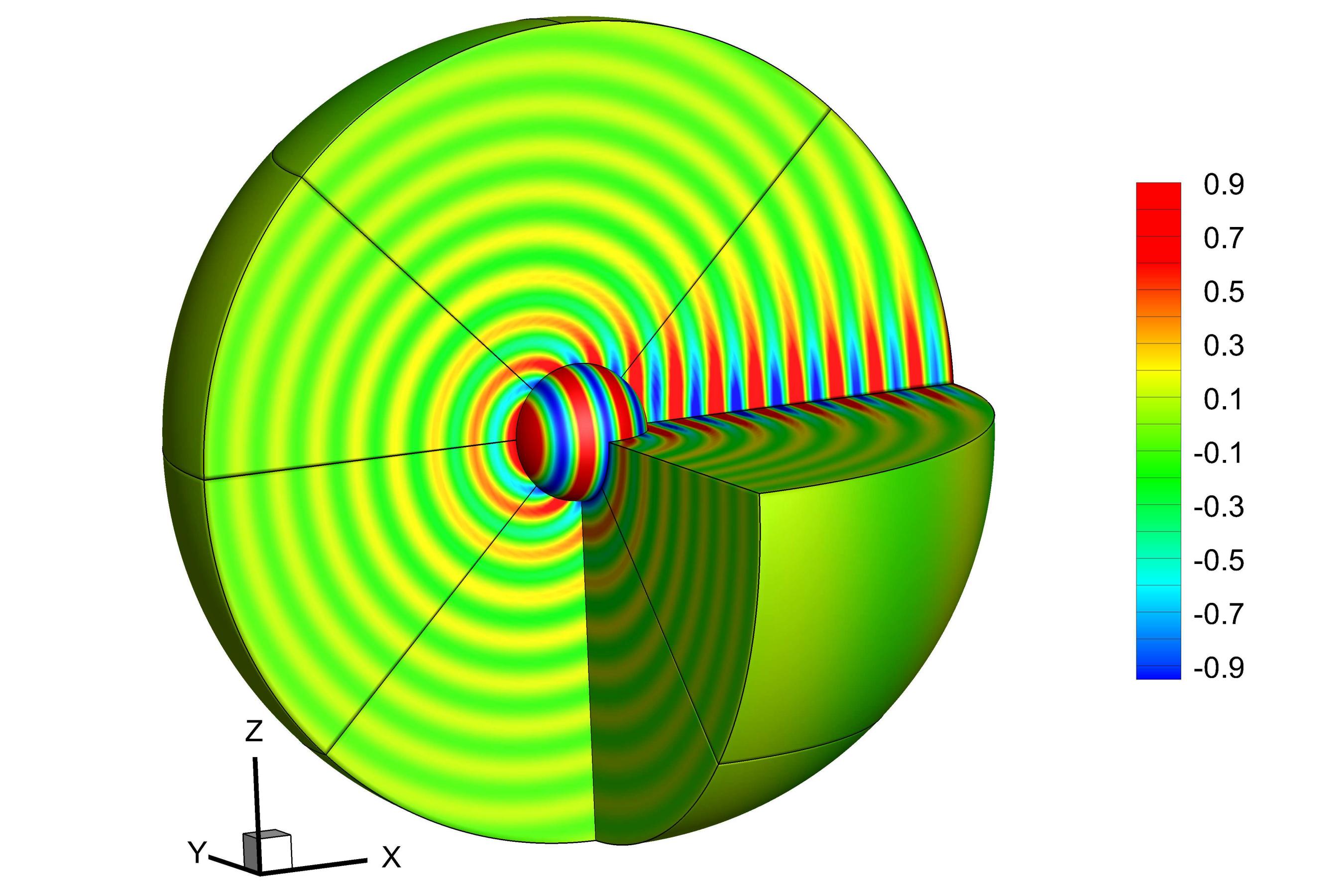}}\quad 
	\subfigure[Imaginary part]{\includegraphics[scale=0.14]{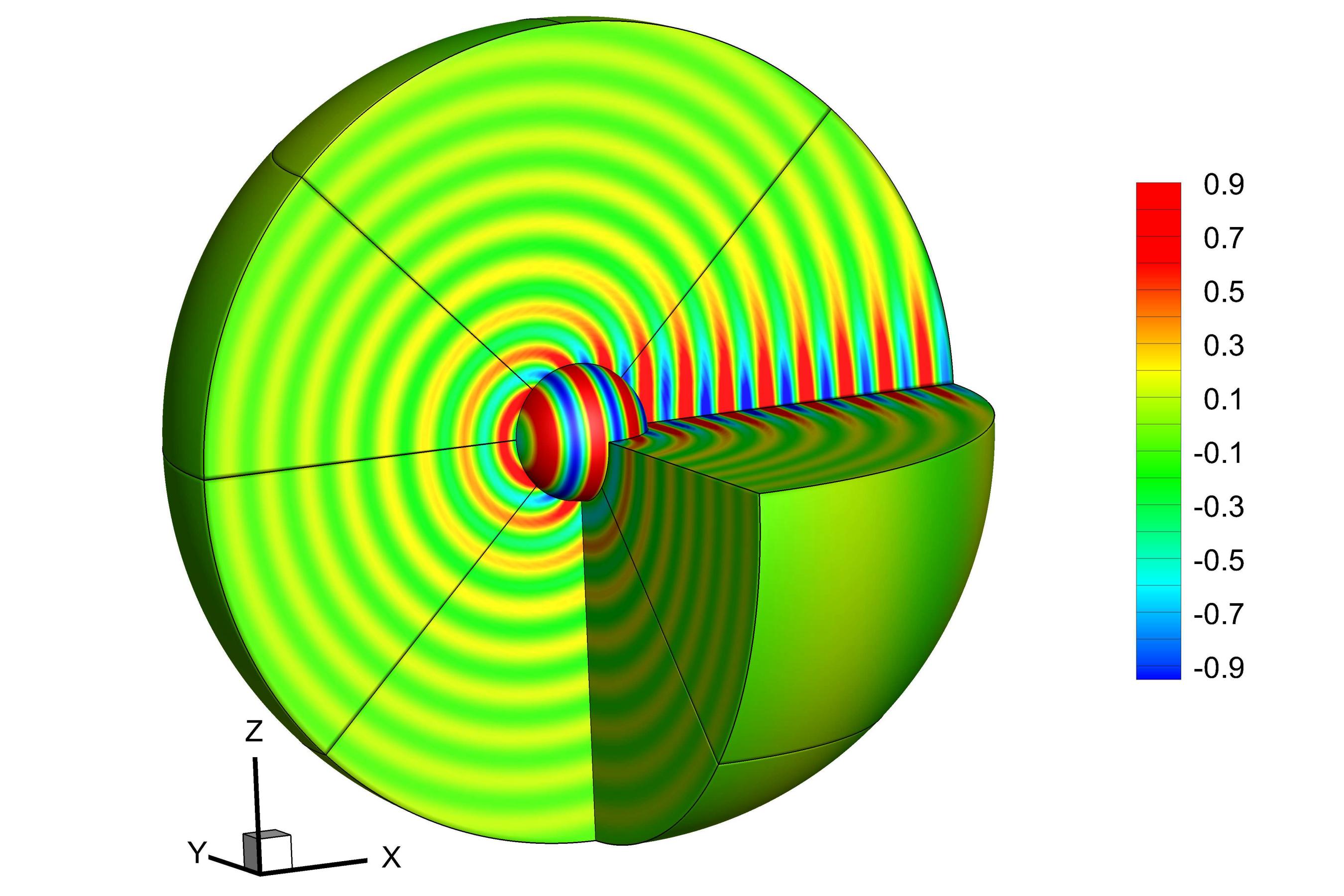}}\quad
	\subfigure[Real part]{\includegraphics[scale=0.14]{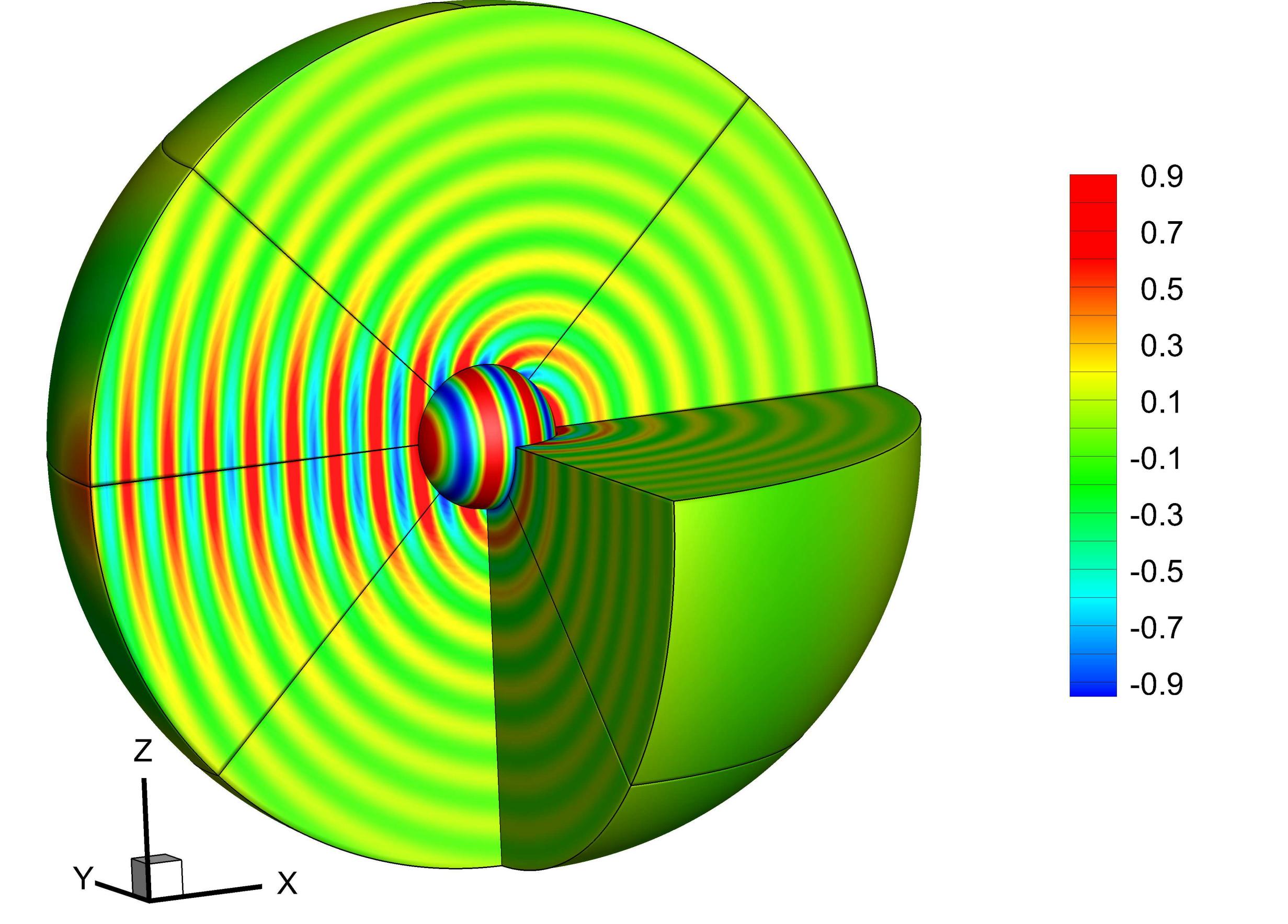}} \quad 
	\subfigure[Imaginary part]{\includegraphics[scale=0.14]{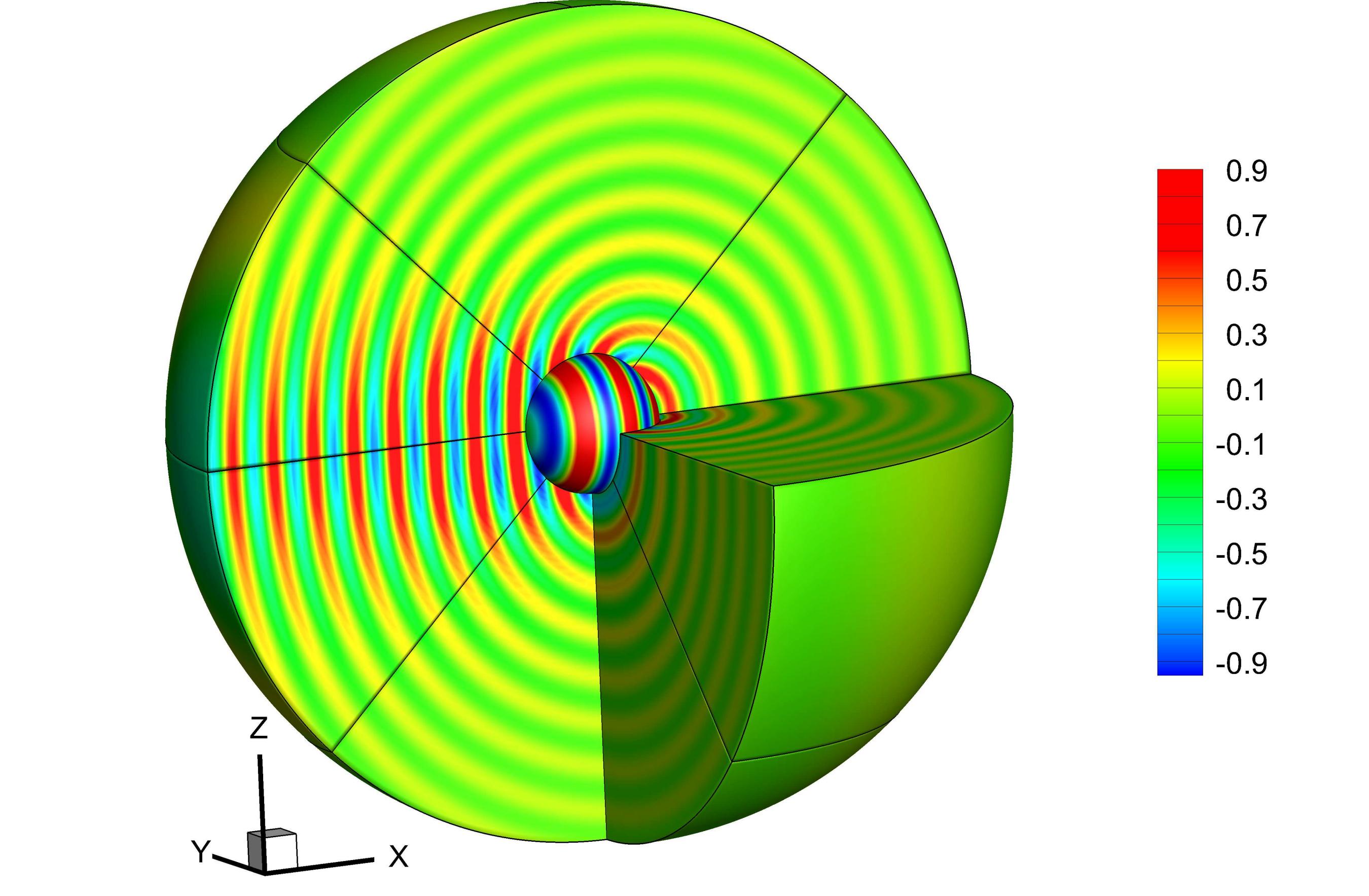}}
	\caption{Acoustic scattering waves from a spherical scatterer with plane incident wave (a)-(b) and spherical incident wave (c)-(d).}
	\label{singlescatter01-01}
\end{center}
\end{figure}

\begin{figure}[!ht]
\begin{center}
	\subfigure[Real part of $E_x$]{\includegraphics[scale=0.18]{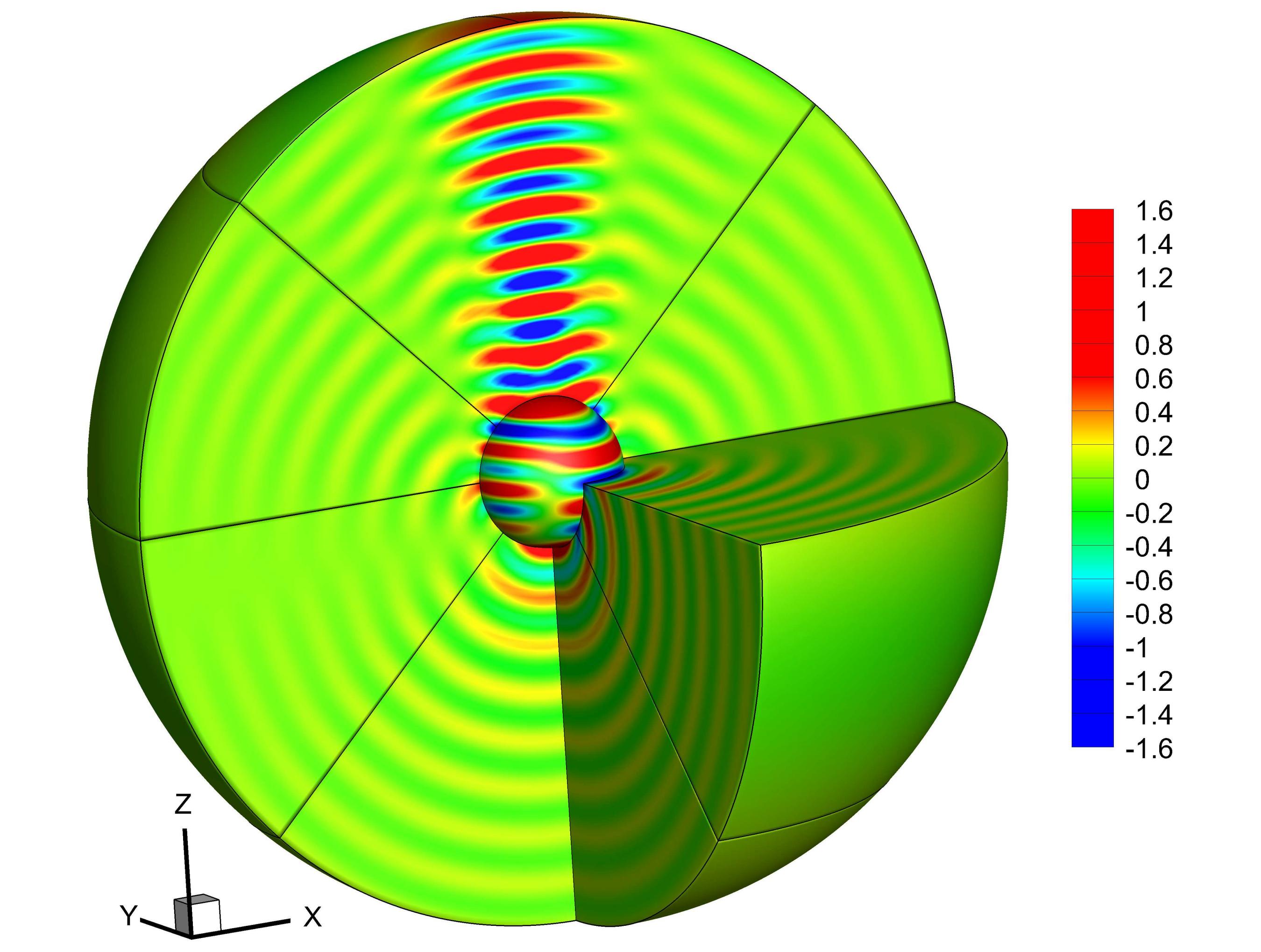}}
	\subfigure[Real part of $E_y$]{\includegraphics[scale=0.18]{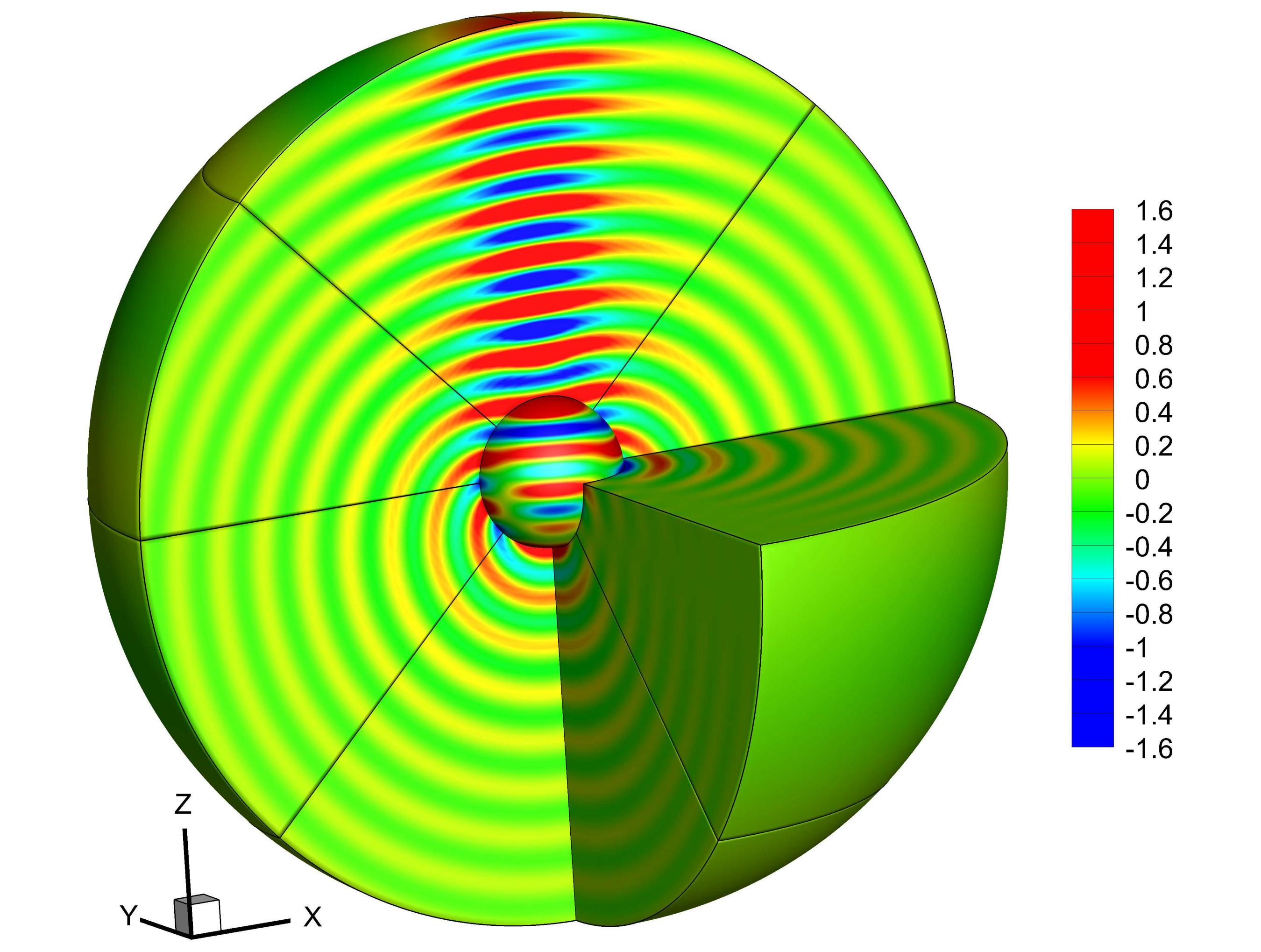}}
	\subfigure[Real part of $E_z$]{\includegraphics[scale=0.18]{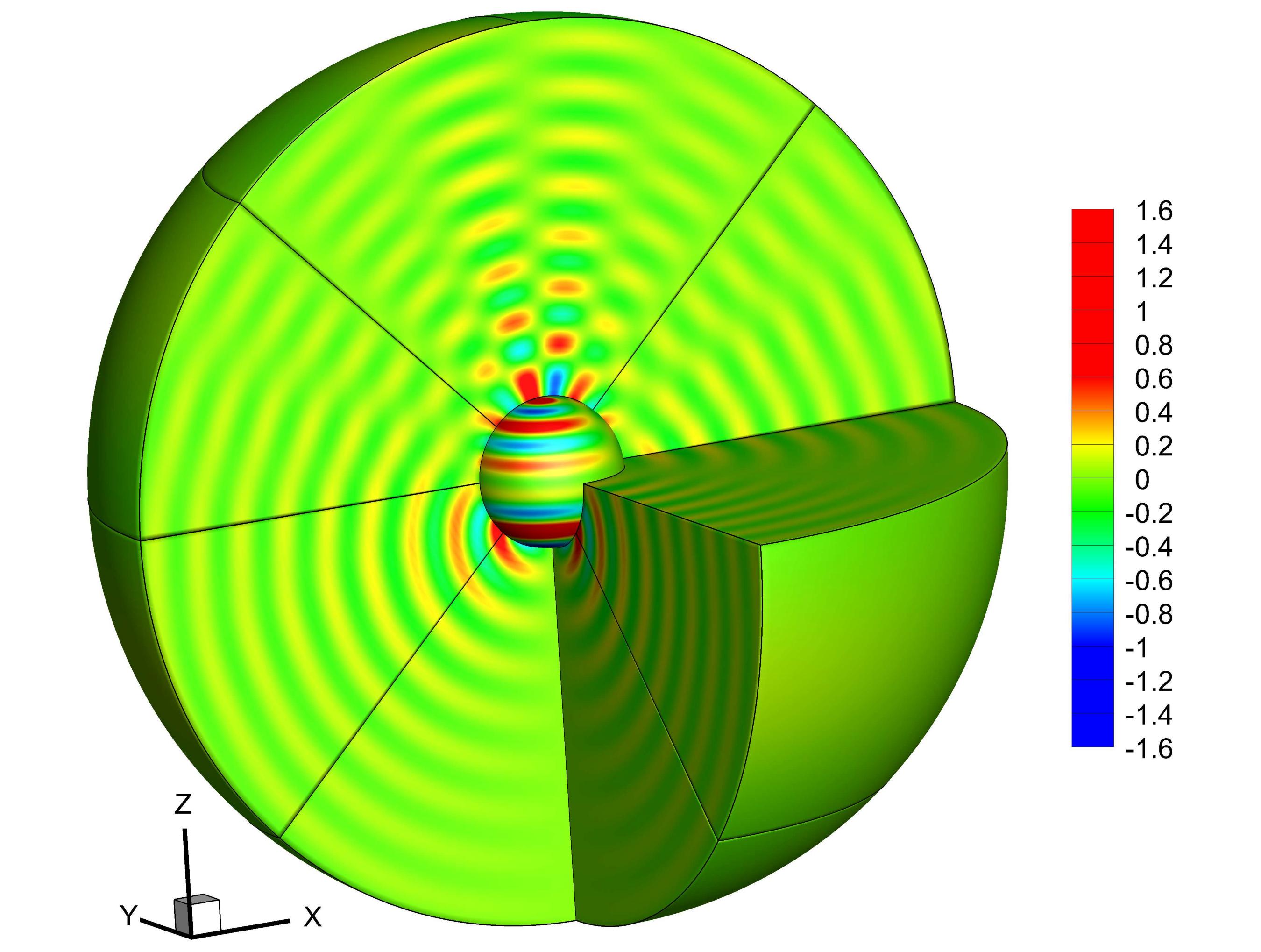}}\\
	\subfigure[Imaginary part of $E_x$]{\includegraphics[scale=0.18]{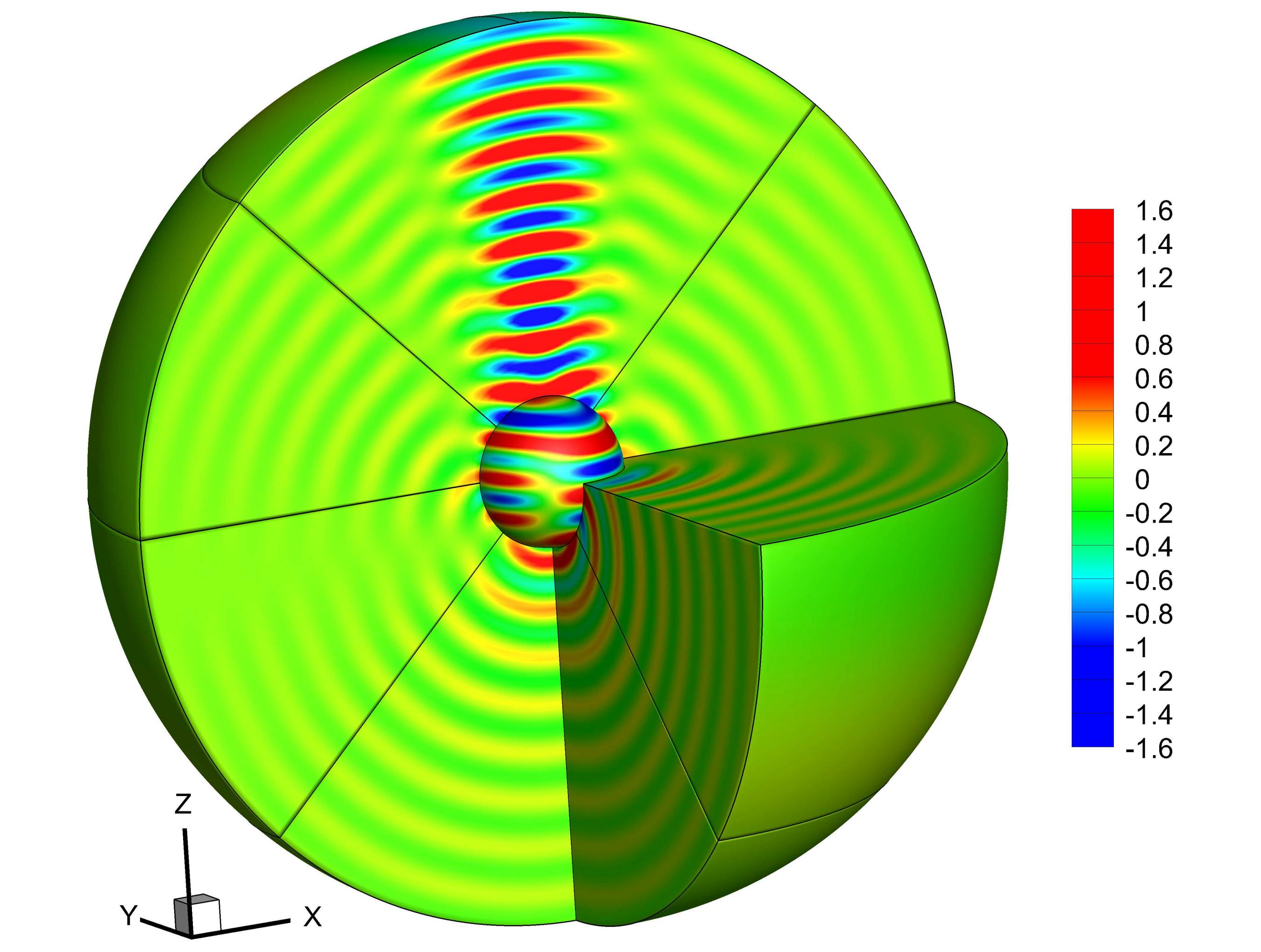}}
	\subfigure[Imaginary part of $E_y$]{\includegraphics[scale=0.18]{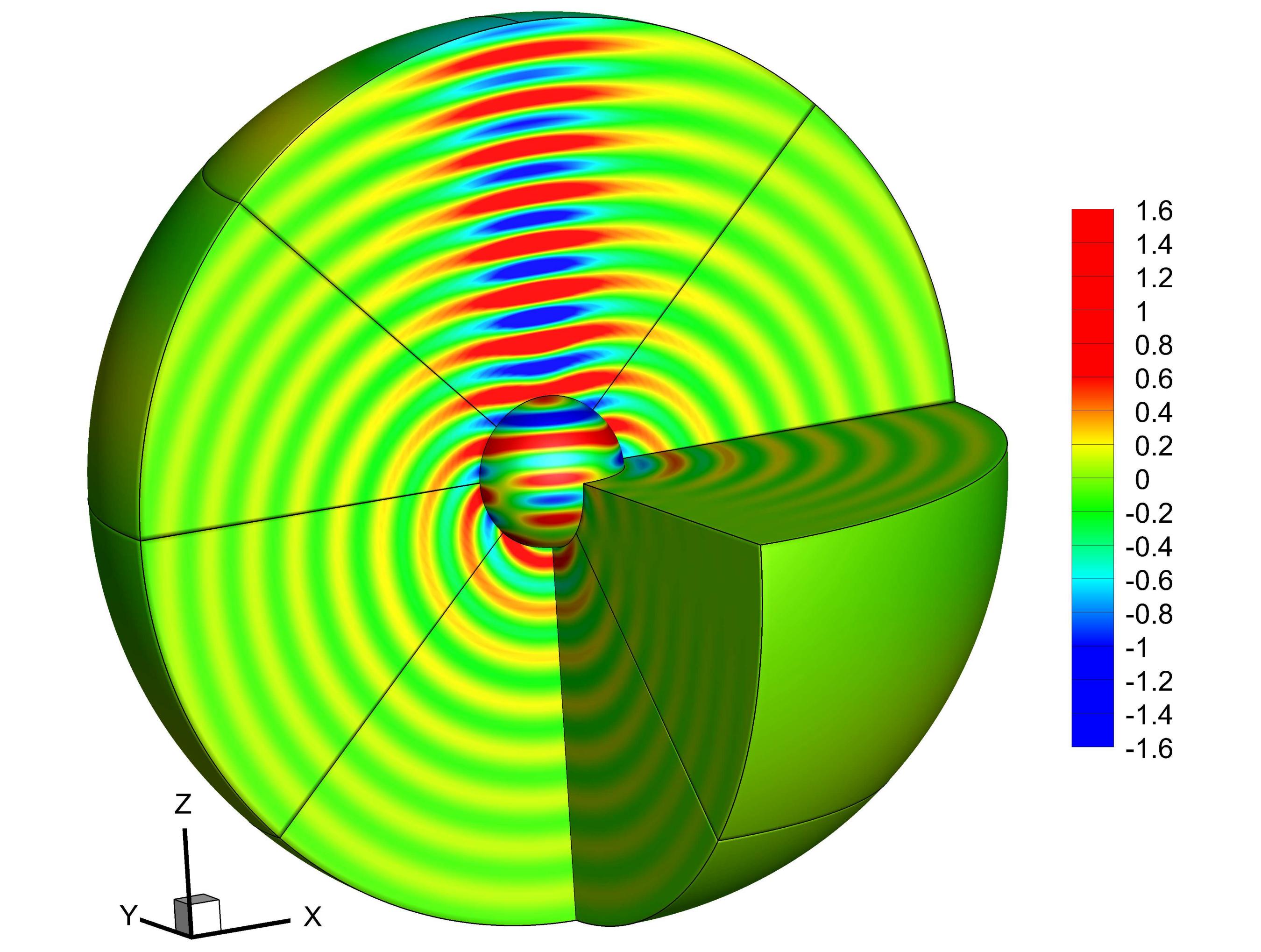}}
	\subfigure[Imaginary part of $E_z$]{\includegraphics[scale=0.18]{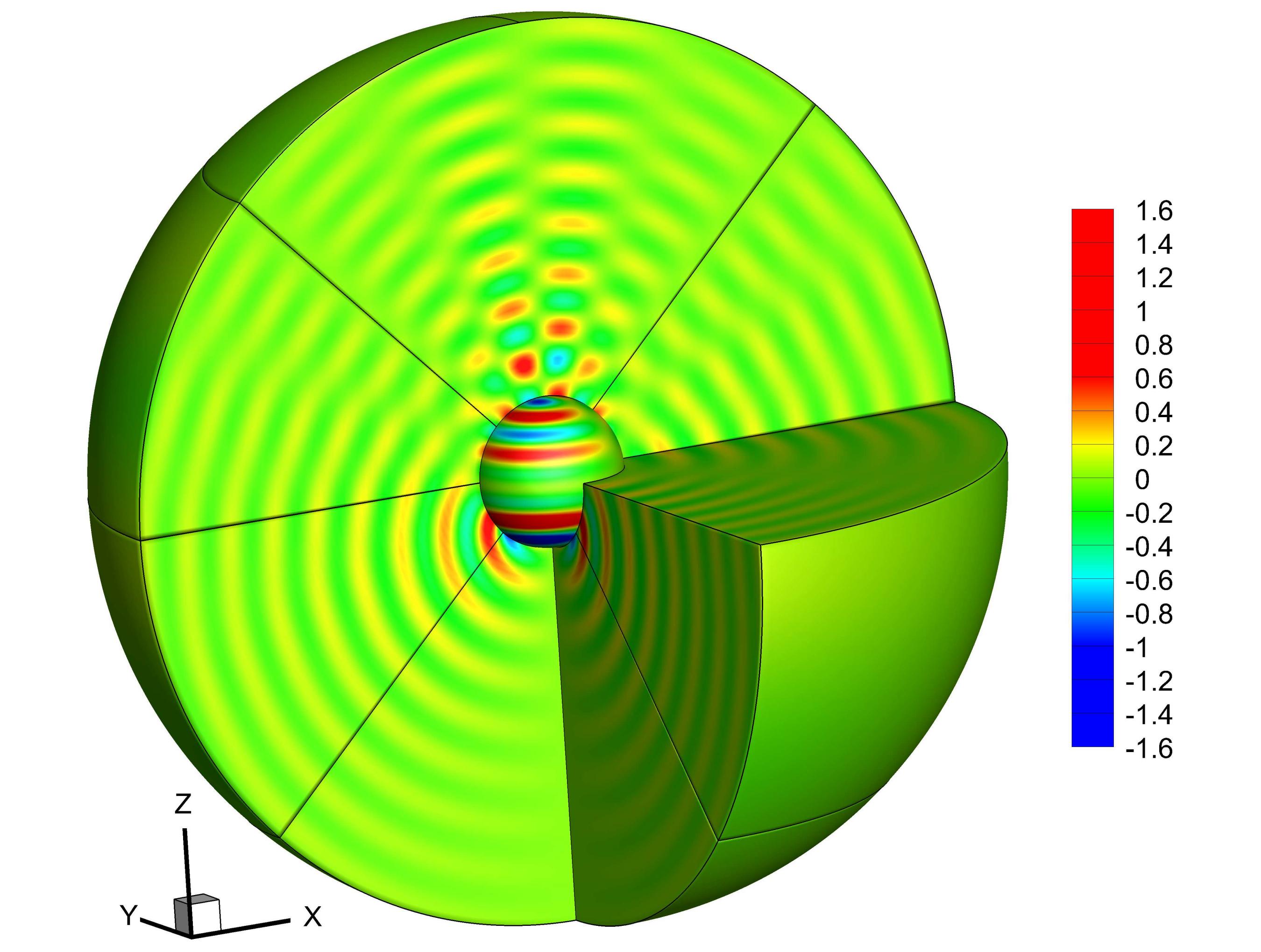}}
	\caption{\small Electromagnetic scattering waves from a spherical scatterer.}
	\label{emspherescatter01-03}
\end{center}
\end{figure}

\subsection{Computing scattering waves by an irregular scatterer}
\label{generalscatterer}

It is seen that with the spectral element approximations (or solutions) on the sphere, we can compute the far-filed scattering waves without loss of accuracy.  Indeed, in many applications (e.g., radar detection and remote sensing), the computation of far-field scattering waves becomes very important.  In what follows, we compute the acoustic scattering wave by an irregular scatterer and note that the approach can be 
extended to Maxwell's equations.  The essential idea is to reduce the unbounded domain by a ball  enclosing the scatterer, and  use the transformed field expansion and spectral-Galerkin method (cf.  \cite{nicholls2006a,fang2007a}) to solve the reduced problem inside the ball. Then we compute the exterior scattering wave by using the aforementioned algorithm.  Again the SPH and VSH expansions become indispensable tools for both the interior and exterior solvers.    


To fix the idea, we consider  
\begin{equation}\label{Helm3D}
\left\{
\begin{split}
&\Delta u+k^2 u=0,\quad {\rm in}\;\; 
\mathbb{R}^3\backslash D, \\
& u|_{\partial D}=g; \quad \frac{\partial u}{\partial r}-{\rm i}ku=O\Big(\frac{1}{r}\Big),\quad {\rm as} \;\; r\rightarrow\infty,
\end{split}\right.
\end{equation}
where  $D$ is a bounded scatterer given by 
\begin{equation}\label{bndscatter}
D=\big\{(r,\theta,\varphi)|0\leq r\leq a+w(\theta,\varphi), 0\leq \theta\leq \pi, 0\leq \varphi<2\pi\big\},
\end{equation}
and $w(\theta,\varphi)=\varepsilon \varrho(\theta, \varphi)$ is  a smooth perturbation of a ball of radius $a.$

%
%
%
%
%
%

We reduce the unbounded domain to a bounded domain by a ball of radius $b$ that  
 encloses the scatterer $D$  (see Figure \ref{singleirregularscatterer} ), and then impose     the exact DtN boundary condition at $r=b.$ 
This leads to an equivalent boundary value problem: 
\begin{subequations}\label{truncatedprob}
	\begin{numcases}{}
	\Delta u+k^2u=0, \quad (r, \theta,\varphi)\in\Omega:=B(b)\backslash D,\label{helmholtz1}\\
	u(a+w(\theta,\varphi), \theta,\varphi)=g(\theta,\varphi), \label{bdrycond1}\\
	\partial_r u+T_b[u]=0,\quad {\rm at}\;\;  r=b,\label{dtn}
	\end{numcases}
\end{subequations}
where
$$T_b[u]:=-\sum\limits_{l=0}^{\infty}k\frac{\partial_zh_l^{(1)}(kb)}{h_l^{(1)}(kb)}\sum\limits_{m=-l}^l\widehat u_{lm}Y_{l}^m(\theta,\varphi),$$
is the DtN operator and $\{\widehat u_{lm}\}$ are the spherical harmonic expansion coefficients of $u$ at $r=b$.

\begin{figure}[!ht]
	\includegraphics[scale=.25]{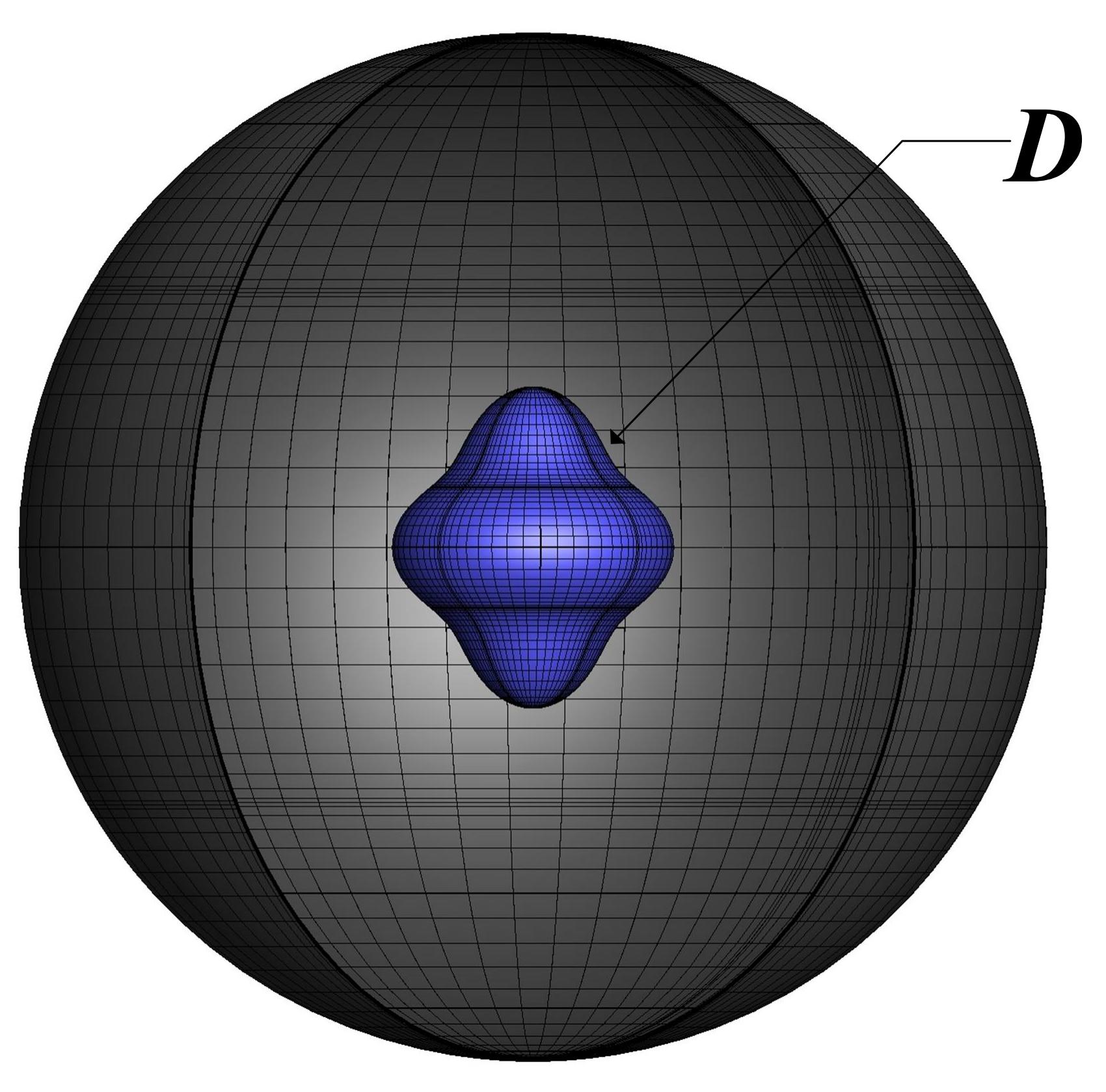}
	\caption{\small An illustration of scatterer $D$ and artificial boundary $\mathbb S_b^2$.} \label{singleirregularscatterer}
\end{figure}

The transformed field expansion (TFE) method (cf.  \cite{nicholls2006a,fang2007a})  has been proven to be efficient in solving  the BVP \eqref{truncatedprob} even with slightly large perturbation.  It relies on an accurate and efficient SPH expansion algorithm for solving the transformed field in the separable spherical annulus $\Omega_{a, b}:=\{(r,\theta, \varphi), a<r<b\}$ transformed from $\Omega$ via the transformation:  
\begin{equation}
\hat r=\frac{(b-a)r-bw(\theta,\varphi)}{(b-a)-w(\theta,\varphi)}=\frac{dr-bw}{d-w},\quad \hat\theta=\theta, \quad \hat\varphi=\varphi,
\end{equation}
where $d=b-a$. Let $\widetilde u(\hat r,\hat\theta,\hat{\varphi})=u\big(\hat r+\frac{(b-\hat r)w(\hat\theta, \hat\varphi)}{d}\big)$ be the transformed field of $u$. Then it satisfies
\begin{subequations}\label{transformedmodel}
	\begin{numcases}{}
	\partial_{\hat r}(\hat r^2\partial_{\hat r}\widetilde u)+\Delta_{\widehat S}\widetilde u+k^2\hat r^2\widetilde u=f(\hat r,\hat\theta,\hat\varphi;\widetilde u,w),\quad (\hat r,\hat \theta,\hat \varphi)\in\Omega_{a,b},\label{transformedhelm}\\
	\widetilde u(a,\hat\theta,\hat\varphi)=g(\hat\theta, \hat\varphi),\quad \partial_{\hat r}\widetilde u(b,\hat\theta,\hat\varphi)+T_b[\widetilde u(b,\hat\theta,\hat\varphi)]=H(\hat\theta,\hat\varphi;\widetilde u,w),\label{transformeddiri}
	\end{numcases}
\end{subequations}
where 
\begin{equation}
\Delta_{\widehat S}\widetilde u:=\frac{1}{\sin\hat\theta}\partial_{\hat\theta}(\sin\hat\theta\partial_{\hat\theta}\widetilde u)+\frac{1}{\sin^2\hat\theta}\partial_{\hat\varphi}^2\widetilde u.
\end{equation}
According to the field expansion theory \cite{fang2007a}, we can approximate the solution of \eqref{transformedmodel} by 
\begin{equation}\label{fieldexp}
\widetilde u_J(\hat r, \hat\theta,\hat{\varphi})=\sum\limits_{j=0}^{J}\widetilde u_j(\hat r, \hat\theta,\hat{\varphi})\varepsilon^j, 
\end{equation}
and the functions $\{\widetilde u_j(\hat r, \hat\theta,\hat{\varphi})\}$ are determined by 
\begin{subequations}\label{TFEmodeeq}
	\begin{numcases}{}
	\partial_{\hat r}(\hat r^2\partial_{\hat r}\widetilde u_j)+\Delta_{\widehat S}\widetilde u_j+k^2\hat r^2\widetilde u_j=f_j(\hat r,\hat\theta,\hat\varphi;\{\widetilde u_{j-s}\}_{s=1}^4),\quad (\hat r,\hat \theta,\hat \varphi)\in\Omega_{a,b},\label{TFEmodeeq1}\\
	\widetilde u_j(a,\hat\theta,\hat\varphi)=\delta_{j,0}g(\hat\theta, \hat\varphi),\;\; \partial_{\hat r}\widetilde u_j(b,\hat\theta,\hat\varphi)+T_b[\widetilde u_j(b,\hat\theta,\hat\varphi)]=H_j(\hat\theta,\hat\varphi;\widetilde u_{j-1}),\label{TFEmodeeq2}
	\end{numcases}
\end{subequations}
where $\delta_{j,0}$ is the Kronecker delta. Noting that $f_j$ only involves $\widetilde u_{j-s}(s=1, 2, 3, 4)$ and $H_j$ only depends on $\widetilde u_{j-1}$, each $\widetilde u_j$ can be solved from \eqref{TFEmodeeq}  in a spherical shell $\Omega_{a,b}$.
Thanks to the special geometry of the computational domain $\Omega_{a,b}$, an efficient spectral-Galerkin method with SPH expansion can be used. Denote the SPH expansions by
\begin{subequations}
\begin{align}
&\{\widetilde u_j(\hat r,\hat\theta,\hat\varphi), f_j(\hat r,\hat\theta,\hat\varphi)\}=\sum\limits_{l=0}^{\infty}\sum\limits_{m=-l}^l\{\widehat U_{lm}^j(\hat r), \widehat F_{lm}^j(\hat r)\}Y_l^m(\hat\theta,\hat\varphi),\\
&\{g(\hat\theta,\hat\varphi), H_j(\hat\theta,\hat\varphi)\}=\sum\limits_{l=0}^{\infty}\sum\limits_{m=-l}^l\{\widehat G_{lm}, \widehat H_{lm}^j\}Y_l^m(\hat\theta,\hat\varphi). 
\end{align}
\end{subequations}
Then, \eqref{TFEmodeeq} can be decomposed into a sequence of one-dimensional, two-point boundary value problems:
\begin{subequations}\label{dimreducedprob}
\begin{numcases}{}
\frac{d}{d\hat r}\Big(\hat r^2\frac{d\widehat U_{lm}^j}{d\hat r}\Big)+(\hat r^2k^2-l(l+1))\widehat U_{lm}^j=\widehat F_{lm}^j,\quad \hat r\in (a, b),\\
\widehat U_{lm}^j(a)=\delta_{j,0}\widehat G_{lm},\quad \frac{d\widehat U_{lm}^j(b)}{d\hat r}- k\frac{\partial_zh_l^{(1)}(kb)}{h_l^{(1)}(kb)}\widehat U^j_{lm}(b)=\widehat H_{lm}^j,
\end{numcases}
\end{subequations}
for integers $|m|\leq  l$, $ l\geq 0$ and $j=0, 1, \cdots, J$. We use the proposed SPH expansion algorithm to compute the expansion coefficients $\{\widehat F_{lm}^j, \widehat G_{lm}, \widehat H^j_{lm}\}$ and then employ  spectral solver to solve \eqref{dimreducedprob}. In the calculation of SPH expansion coefficients, we first evaluate the values of given functions on spectral element grids and then apply our SPH expansion algorithm. Further, we can use the solver introduced in the last subsection to compute the far field of the scattering wave after having the approximation of $\widetilde u_J$ on the artificial boundary $r=b$.  

We remark that the VSH expansion algorithm together with the TFE method can also be applied to solve the electromagnetic scattering problem in a  similar setting. We refer to \cite{ma2017} for the detailed descripition of the TFE method along this line. 
%

Below, we present some numerical results. Assume that $\partial D$ has the parameterization: 
\begin{equation}
r=a+\varepsilon \varrho(\theta, \varphi),
\end{equation}
where
\begin{equation}
a=0.2, \quad \varepsilon=0.05, \quad \varrho(\theta,\varphi)=\frac{1}{8}(35\cos^4\theta-30\cos^2\theta+3).
\end{equation}
Take the incident wave $g$ to be the plane wave $e^{\ri k\hat{\bs k}\cdot\bs x}$, and the spherical wave $e^{\ri k|\bs x-\bs x_0|}$ with $\hat{\bs k}=(1,0,0)$, $k=30$, $\bs x_0=(1, 0, 1)$. The artificial boundary centered at origin with radius $b=0.4$ is adopted. It is set to be very close to the scatterer $D$ for small truncated domain and thus less computation time. On the other hand, the scattering field outside $B(b)$ is computed from the spectral element approximation on $\partial B(b)$. For the spectral element approximation on spherical surfaces, we always use a uniform $3\times 4$ mesh in the $\theta$-$\varphi$ plane and set $N=40$. Actually, this spectral element approximation setting is good enough to make the spectral element interpolation to machine accuracy. In the spectral solver of one dimensional problems \eqref{dimreducedprob}, we use polynomial of degree $p=30$. The field expansion \eqref{fieldexp} is truncated at $J=10$. The computed values $\widehat{U}_{N,l}$ for $35\leq l\leq 40$ are listed in Table \ref{tabcubic5}. These results show that it is enough to set the truncation mode $L=40$ for $k=30$. 
We depict the scattering waves in Figure \ref{singlescatter02-01} and Figure \ref{singlescatter02-02}, where the scattering waves outside $B(b)$ in the right columns are computed by the numerical method introduced in the Subsection \ref{singlesphscat}.
\begin{figure}[!ht]
	\begin{center}
		\subfigure[Real part inside $B(b)$ ]{\includegraphics[scale=0.40]{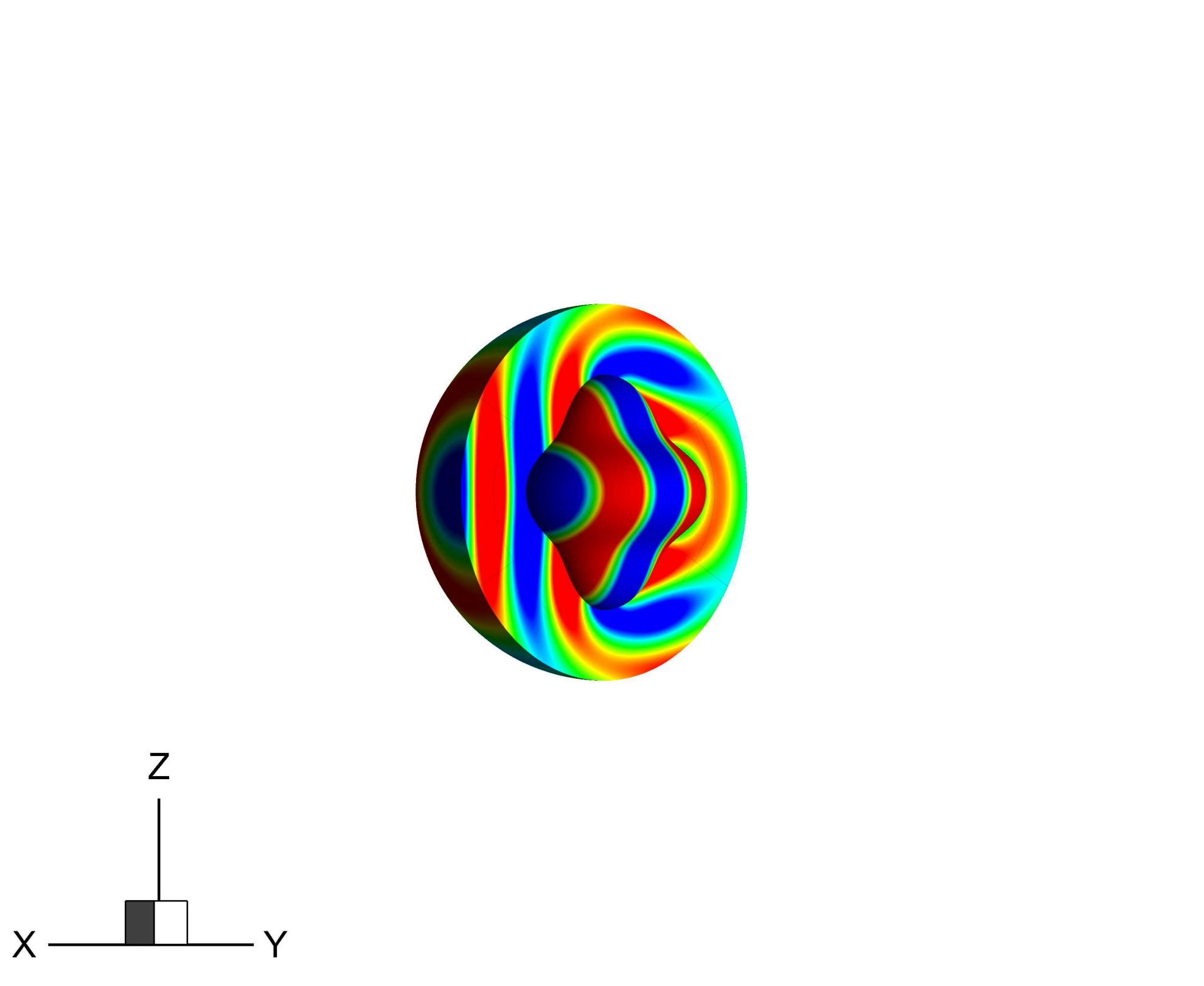}} 
		\subfigure[Real part inside and outside $B(b)$ ]{\includegraphics[scale=0.35]{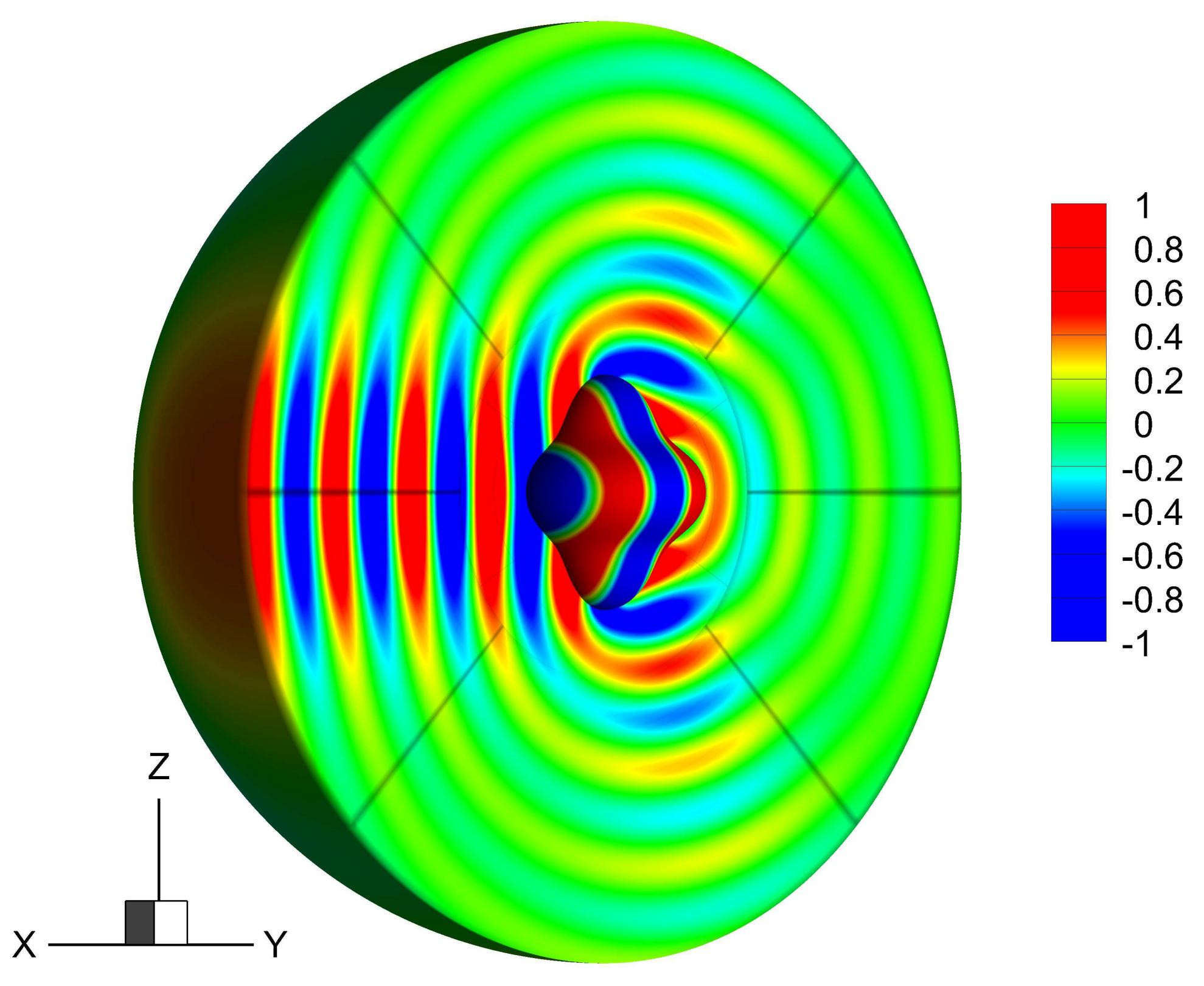}}\\
		\subfigure[Imaginary part inside $B(b)$]{\includegraphics[scale=0.40]{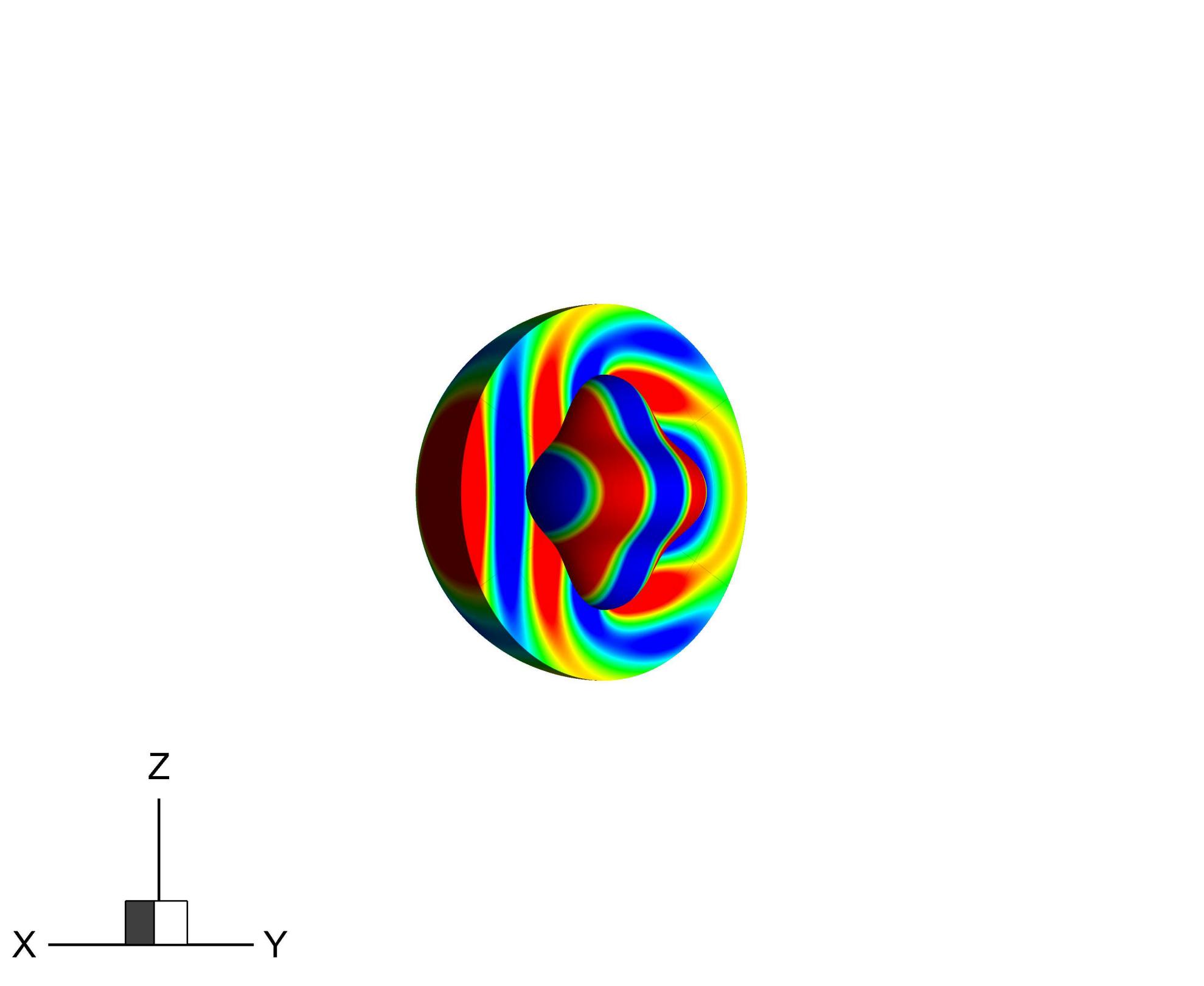}} 
		\subfigure[Imaginary part inside and outside $B(b)$]{\includegraphics[scale=0.35]{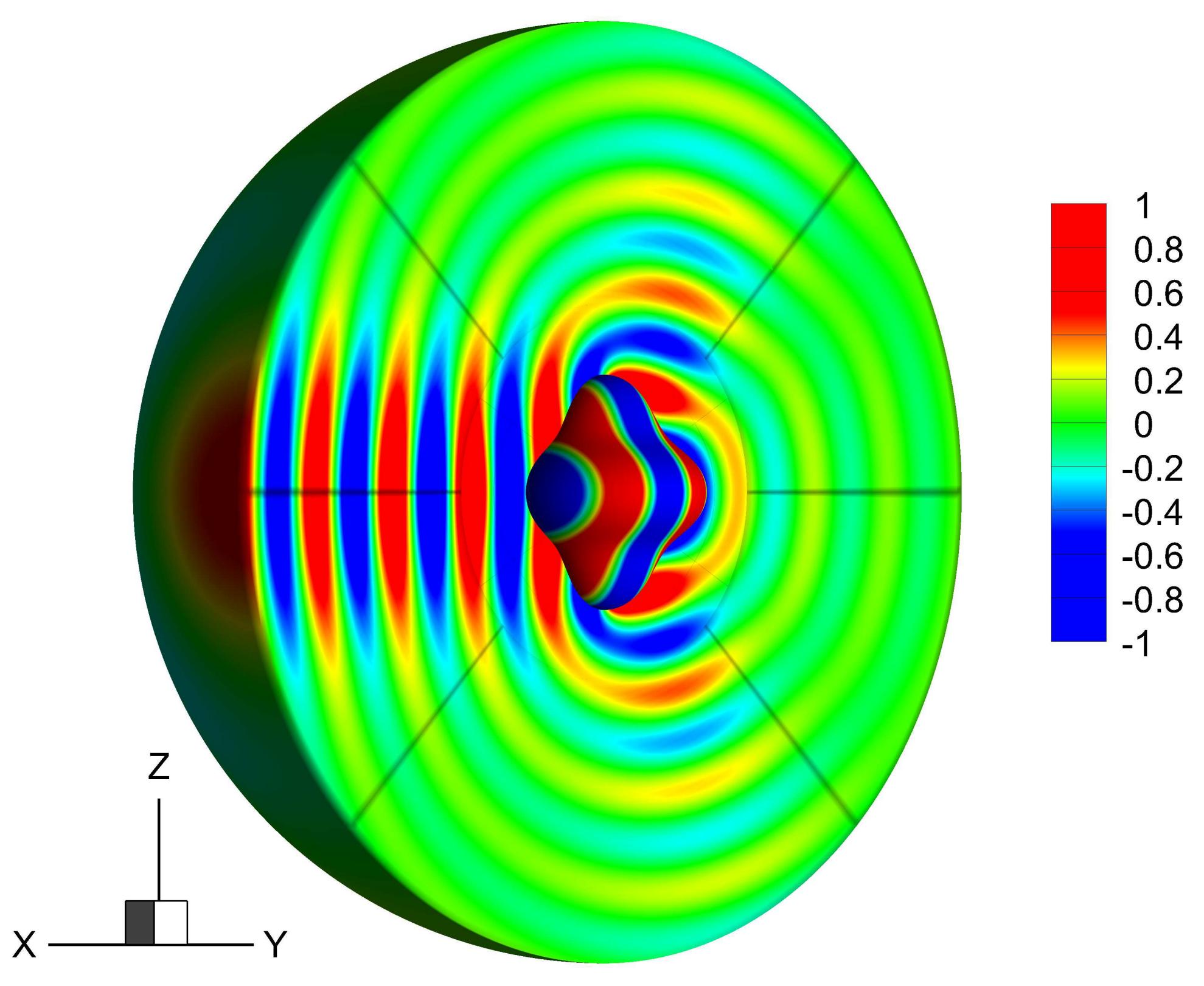}}
		\caption{Scattering wave of plane incident wave.}
		\label{singlescatter02-01}
	\end{center}
\end{figure}
\begin{figure}[!ht]
	\begin{center}
		\subfigure[Real part inside $B(b)$ ]{\includegraphics[scale=0.40]{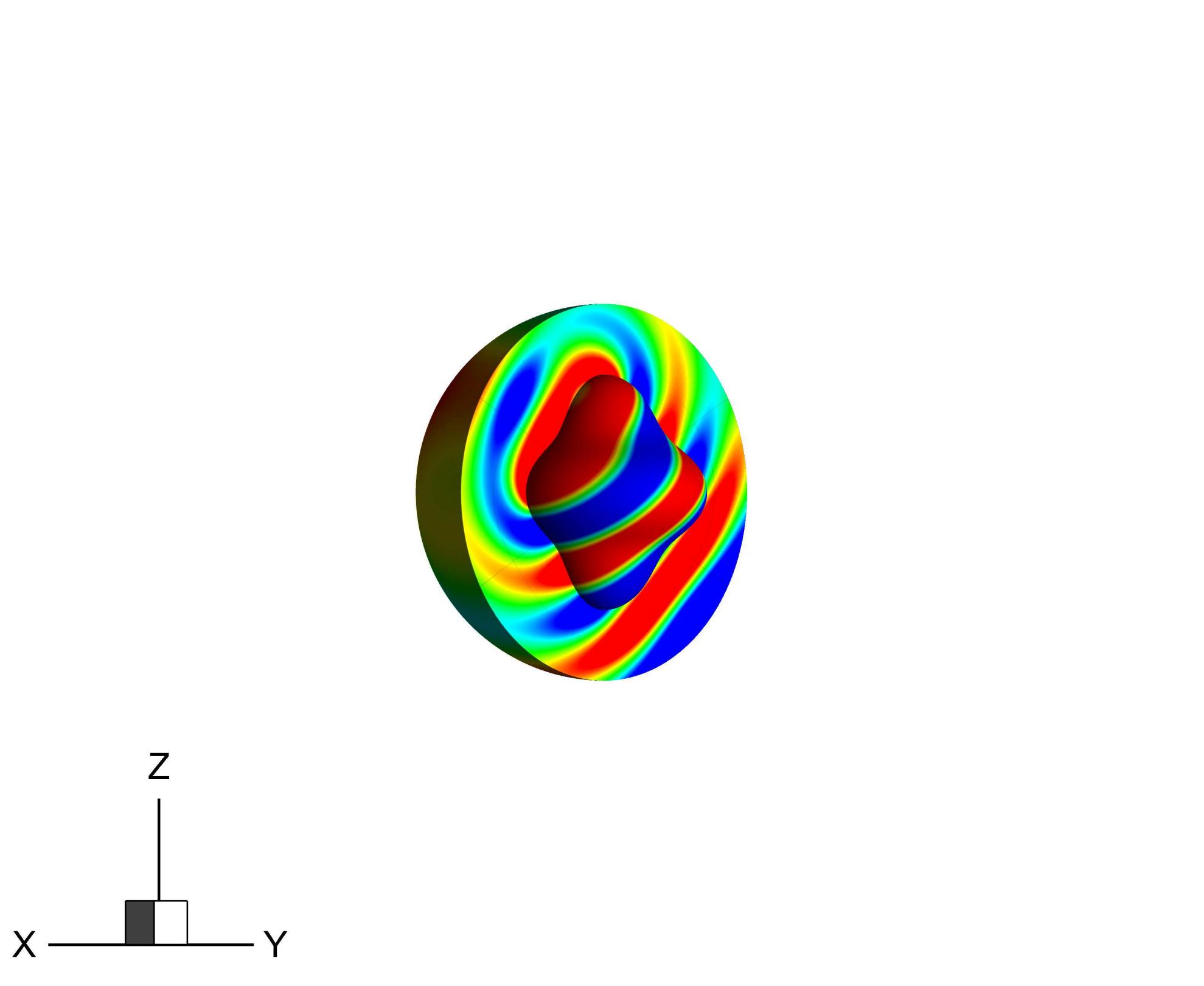}} 
		\subfigure[Real part inside and outside $B(b)$ ]{\includegraphics[scale=0.35]{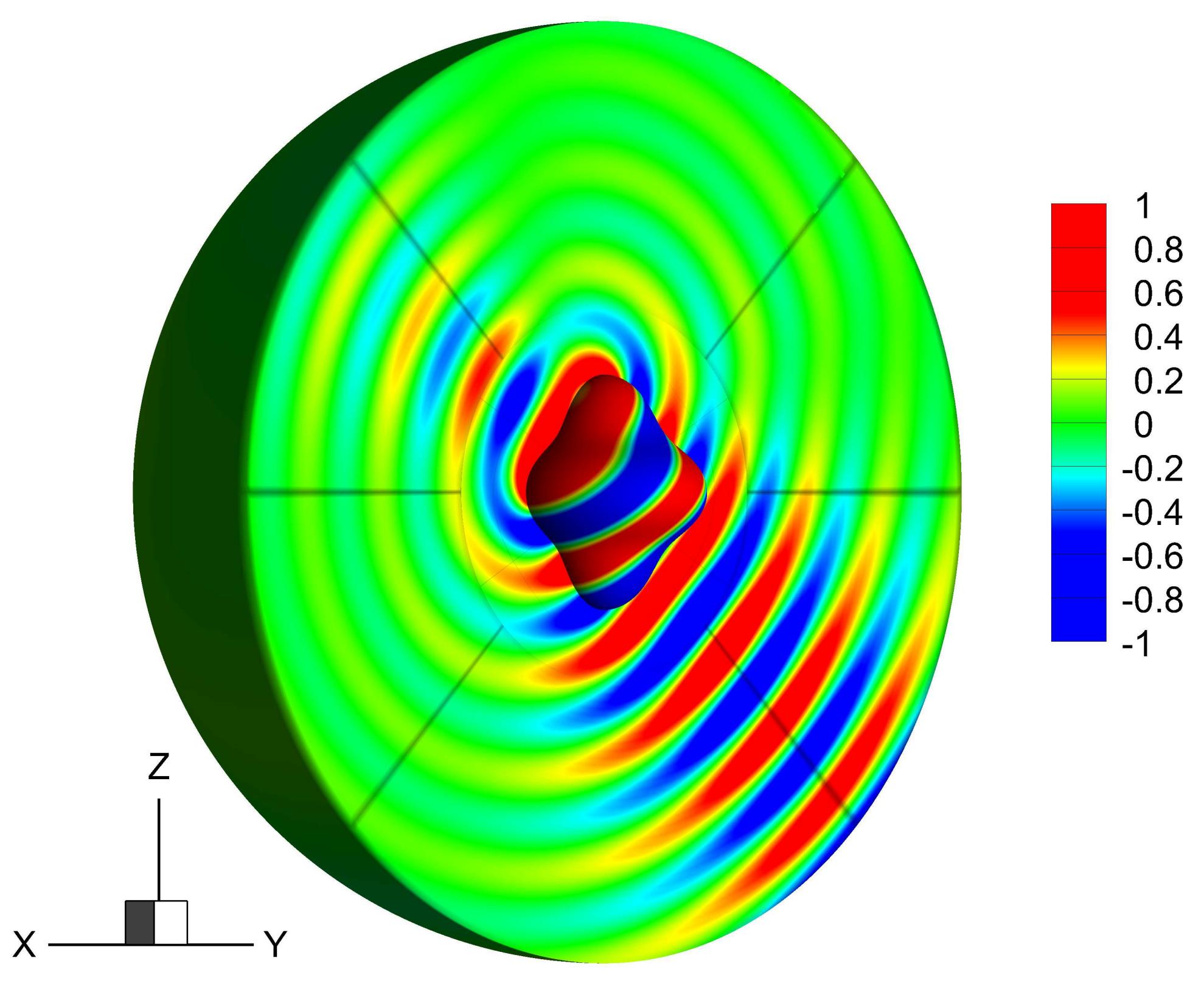}}\\
		\subfigure[Imaginary part inside $B(b)$]{\includegraphics[scale=0.40]{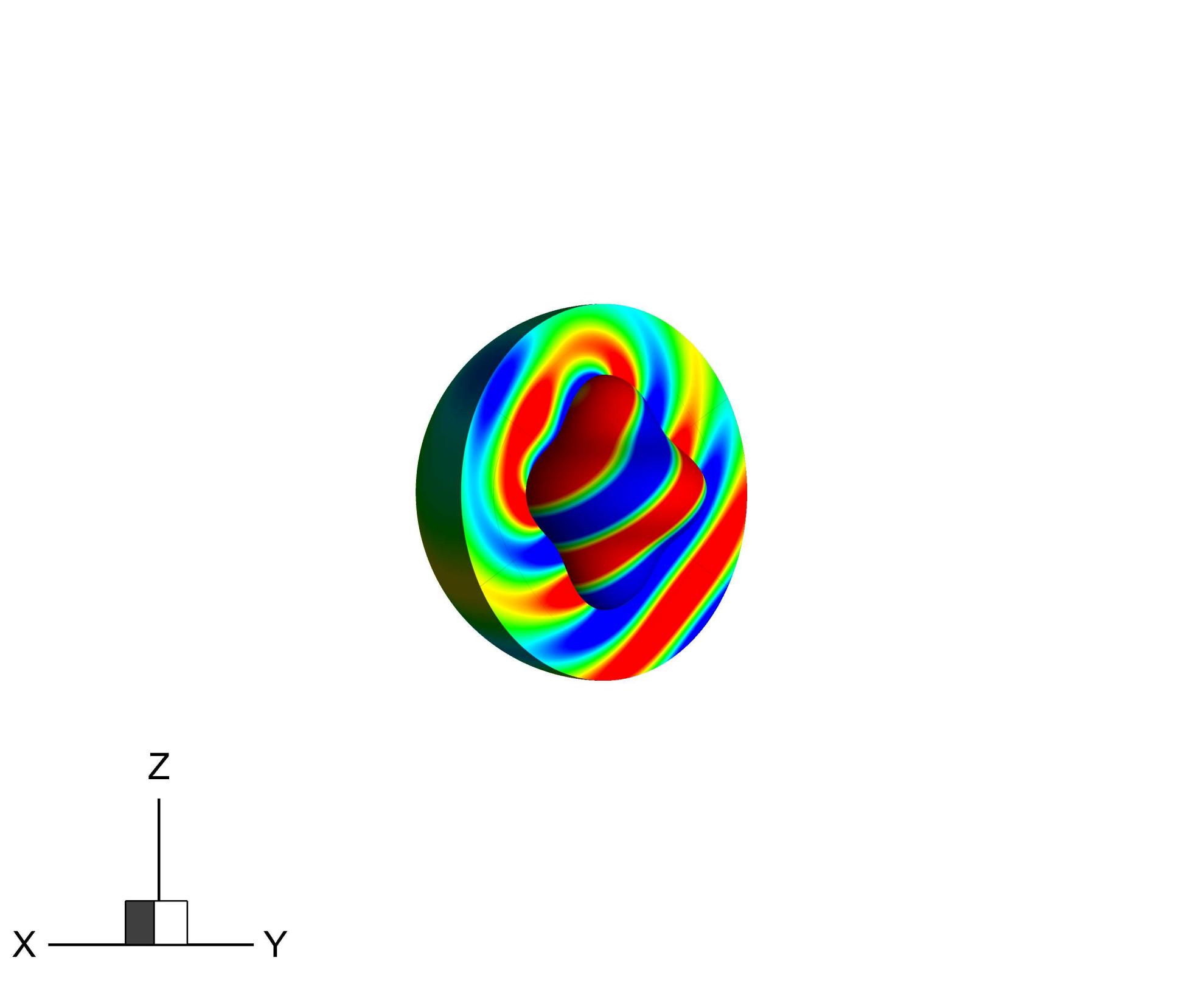}} 
		\subfigure[Imaginary part inside and outside $B(b)$]{\includegraphics[scale=0.35]{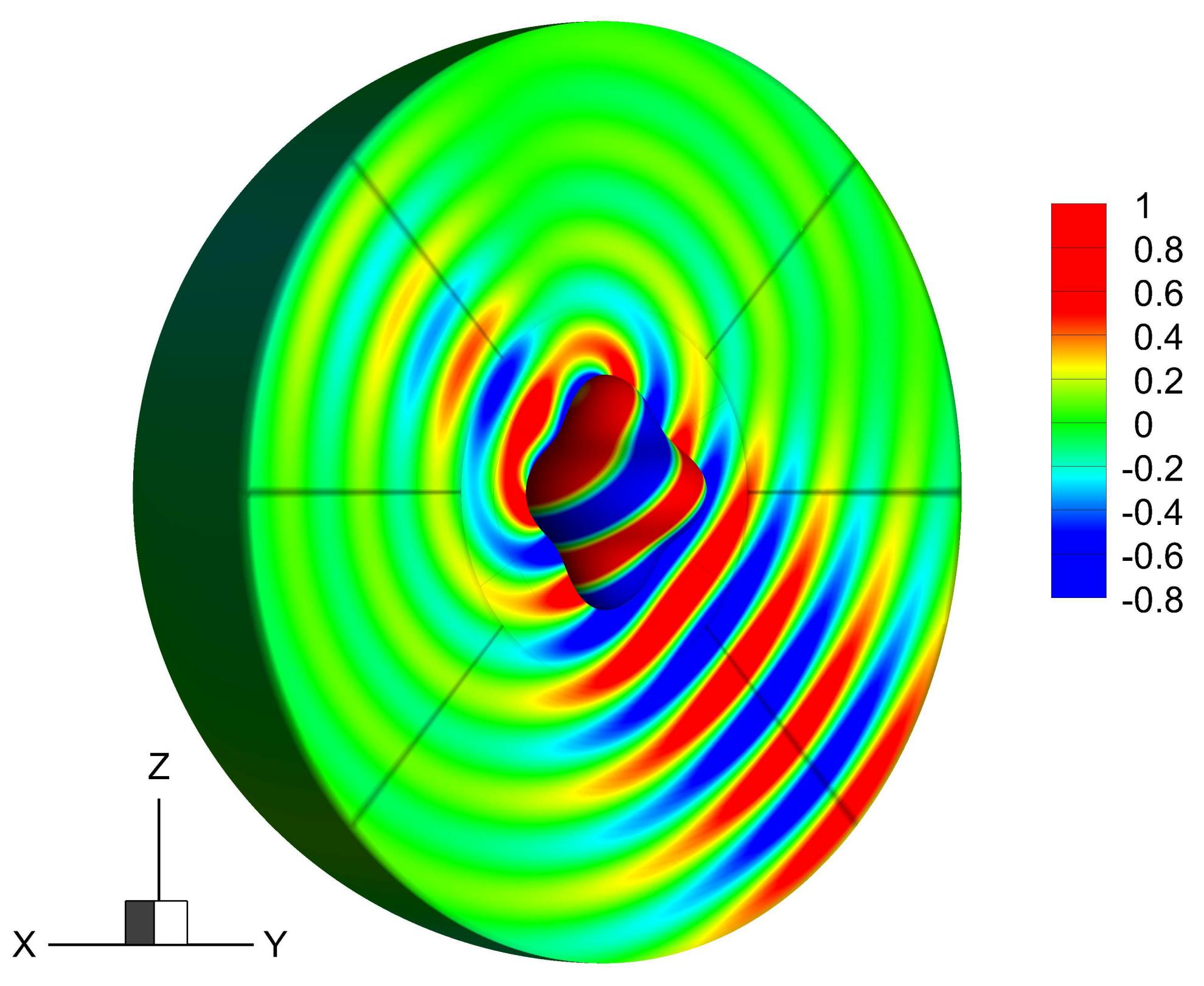}}
		\caption{Scattering wave of spherical incident wave.}
		\label{singlescatter02-02}
	\end{center}
\end{figure}
\begin{table}[h]
	\caption{Magnitude of spherical harmonic coefficients (single irregular scatterer).}
	\begin{center}
		\begin{tabular}{|c|c|c|}
			\hline
            \multirow{2}{*}{$l$} & \multicolumn{2}{|c|}{$\widehat{U}_{N,l}$}\\
			\cline{2-3}
			& plane incident wave   & spherical incident wave \\
			\hline
			35 &  5.5527e-12  &  3.8383e-12      \\
			\hline
			36 &  2.5861e-12  &  1.5264e-12    \\
			\hline
			37 &  9.9869e-13  &  7.6434e-13       \\
			\hline
			38 &  4.5069e-13  &  2.9476e-13     \\
			\hline
			39 &  1.6978e-13  &  1.3971e-13      \\
			\hline
			40 &  7.3989e-14  &  5.3222e-14    \\
			\hline		
		\end{tabular}
	\end{center}
	\label{tabcubic5}
\end{table}

\subsection{Computing scattering waves by multiple spherical scatterers}
Consider the multiple scattering problem
\begin{subequations}\label{2dmultiscatteringprob}
	\begin{numcases}{}
	\Delta u+k^2u=0, \quad\quad\quad\hbox{in}\;\; \mathbb{R}^3\backslash \bar D,\\
	u=g_N,\qquad\qquad\quad\;\;\;   \hbox{on}\;\;\partial D,\\
	\frac{\partial u}{\partial r}-{\rm i}ku=O\Big(\frac{1}{r}\Big),\quad r\rightarrow\infty,
	\end{numcases}
\end{subequations}
where $k$ is the wave number and $g_N$ is induced by the incident wave.  Without loss of generality,  we assume that the scatterer $D=D_1\cup D_2\cup\cdots D_M$ consists of $M$ well-separated spherical scatterers centered at  points $O_1,O_2,\cdots, O_M$ of radii $a_1, a_2,\cdots,a_M$, respectively, see Figure \ref{twoscater3d} for an example of two scatterers.  As with the previous case, $g_N$ is the high-order spectral element approximation of a given incident wave.

\begin{figure}[!ht]
	\includegraphics[width=0.6\textwidth]{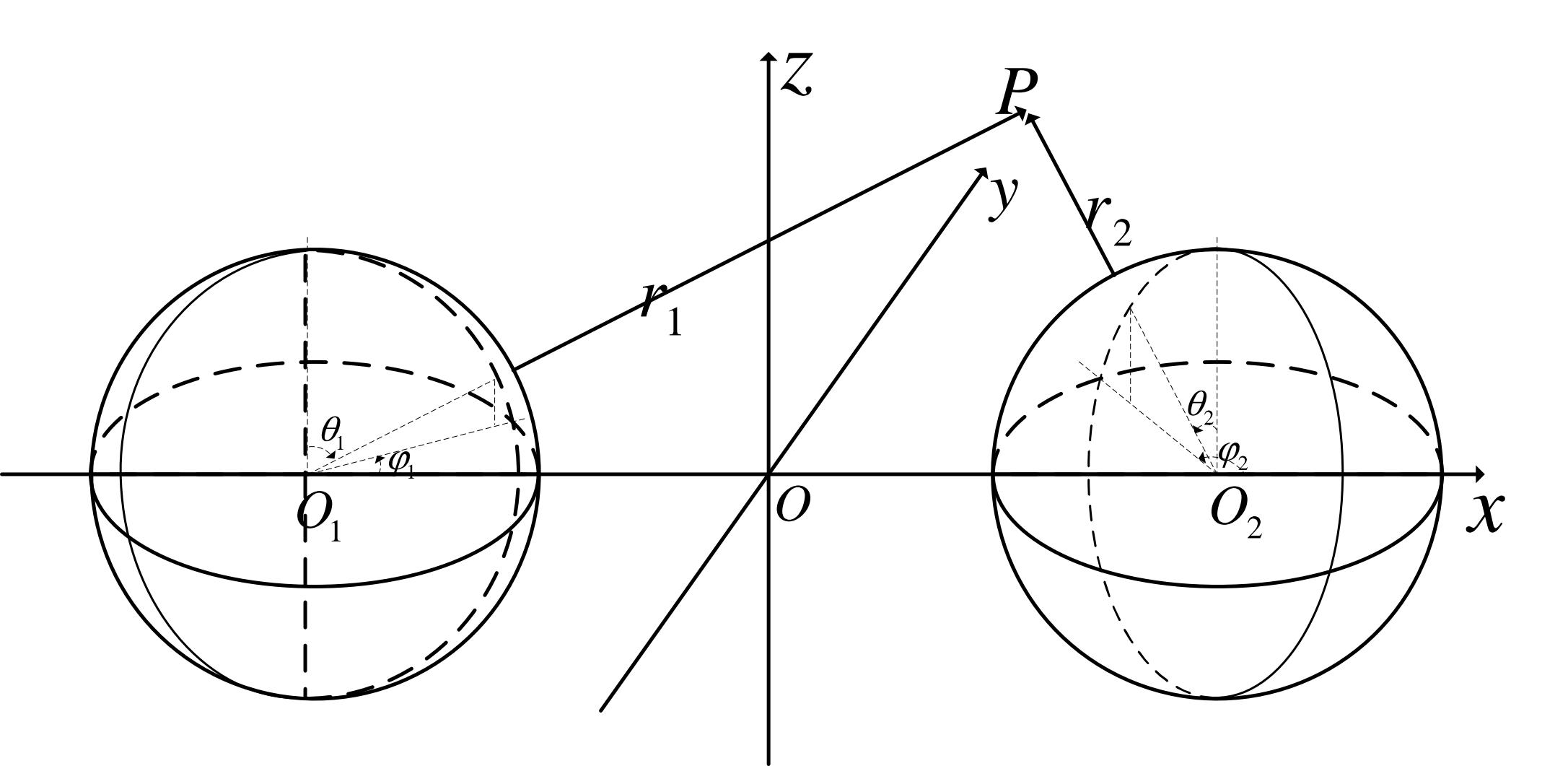}
	\caption{\small Geometric configuration for two spherical scatters.} \label{twoscater3d}
\end{figure}

It is known that the scattering field $u$ has a unique purely outgoing wave decomposition $u=u_1+u_2+\cdots+u_M,$ where $u_1, u_2,\cdots, u_M$ are purely outgoing waves satisfying the following scattering problems (cf. \cite{grote2004dirichlet,jiang2012adaptive}): 
\begin{subequations}\label{2dpurelyoutgoingwave}
	\begin{numcases}{}
	\Delta u_i+k^2u_i=0, \qquad\quad  \hbox{in}\;\; \mathbb{R}^2\backslash \bar D_i,\\
	u_i=g_N-\sum\limits_{j=1,j\neq i}^Mu_j, 
	\quad  \hbox{on}\;\; \partial D_i,\label{outgoingbc}\\
	\frac{\partial u_i}{\partial r}-{\rm i}ku_i=O\Big(\frac{1}{r}\Big),\quad r\rightarrow\infty.
	\end{numcases}
\end{subequations} 
The position vectors of any point $P$ with respect to the local spherical coordinate systems are given by
$${\bs r}_j=(r_j\sin\theta_j\cos\varphi_j,r_j\sin\theta_j\sin\varphi_j, r_j\cos\theta_j),\quad j=1,2,\cdots, M.$$
Let $\bs b_{ij}$ be the position vector of $O_j$ with respect to $O_i$, so that $\bs r_i=\bs r_j+\bs b_{ij}$. We further  introduce  the  unit vectors
$$\hat{\bs b}_{ij}=\frac{\bs b_{ij}}{|\bs b_{ij}|}=\frac{\bs b_{ij}}{b_{ij}},\quad \hat{\bs r}_j=\frac{\bs r_j}{|\bs r_j|}=\frac{\bs r_j}{r_j}, \;\;\;  i,j=1, 2,\cdots, M.$$
Recall  the following useful formulas (cf. \cite{danos1965multipole, martin2006multiple}).
\begin{lemma}
	\label{thmhlynm}
	For $m=0,\pm 1, \cdots, \pm l$ and $l=0, 1, \cdots, $ there hold
	\begin{equation}\label{2formuA}
	\begin{split}
	& h^{(1)}_l(kr_i)Y_l^m(\hat{\bs r}_i)=\sum\limits_{n=0}^{\infty}\sum\limits_{s=-n}^{n}{S}_{nl}^{sm}(\bs b_{ij})j_{n}(kr_j)Y_{n}^{s}(\hat{\bs r}_j), \quad r_j<b_{ij},\\
	& h^{(1)}_l(kr_i)Y_l^m(\hat{\bs r}_i)=\sum\limits_{n=0}^{\infty}\sum\limits_{s=-n}^{n}\widehat{S}_{nl}^{sm}(\bs b_{ij})h^{(1)}_{n}(kr_j)Y_{n}^{s}(\hat{\bs r}_i), \quad r_j>b_{ij},
	\end{split}
	\end{equation}
	where
	\begin{equation}
	\label{separationmat}
	\begin{split}
	{S}_{nl}^{sm}(\bs b_{ij})&=4\pi\ri^{n-l}\sum\limits_{q=0}^{\infty}\ri^qh^{(1)}_q(k b_{ij})\overline{Y_q^{s-m}}(\hat{\bs b}_{ij})\mathcal{G}(l,m;q,s-m;n),\\
	\widehat{S}_{nl}^{sm}(\bs b_{ji})&=4\pi\ri^{n-l}\sum\limits_{q=0}^{\infty}\ri^q(-1)^mj_q(k b_{ij})\overline{Y_q^{s-m}}(\hat{\bs b}_{ij})\mathcal{G}(l,m;n,-s;q),
	\end{split}
	\end{equation}
	and $\mathcal{G}(l,m;n,-s;q)$ is the Gaunt coefficient.
\end{lemma}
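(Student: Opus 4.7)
The plan is to prove both formulas by the classical plane-wave / Gaunt-coefficient approach of Danos--Maximon. I would first observe that, for $\bs b_{ij}$ fixed and $r_j<b_{ij}$, the function $\bs r_j\mapsto h_l^{(1)}(kr_i)Y_l^m(\hat{\bs r}_i)$ (with $\bs r_i=\bs r_j+\bs b_{ij}$) is a smooth solution of the Helmholtz equation on the open ball $\{r_j<b_{ij}\}$: its only singularity sits at $\bs r_j=-\bs b_{ij}$, which lies on the sphere $r_j=b_{ij}$. Completeness of the regular basis $\{j_n(kr_j)Y_n^s(\hat{\bs r}_j)\}$ then forces an expansion of the claimed shape $\sum_{n,s}S_{nl}^{sm}(\bs b_{ij})\,j_n(kr_j)Y_n^s(\hat{\bs r}_j)$ whose coefficients depend only on $\bs b_{ij}$. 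The analogous argument on $\{r_j>b_{ij}\}$, using the outgoing radiating basis, yields the second expansion with coefficients $\widehat S_{nl}^{sm}(\bs b_{ij})$.

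To pin down the coefficients, I would use a plane-wave spectral representation of the outgoing spherical wave together with the Jacobi--Anger expansion
\[
e^{\ri k\hat{\bs k}\cdot \bs r}=4\pi\sum_{q,t}\ri^q f_q(k|\bs r|)Y_q^t(\hat{\bs r})\overline{Y_q^t(\hat{\bs k})},
\]
with $f_q=j_q$ on the ``inner'' argument and (after the usual Sommerfeld-type contour deformation) $f_q=h_q^{(1)}$ on the ``outer'' argument. Writing $e^{\ri k\hat{\bs k}\cdot\bs r_i}=e^{\ri k\hat{\bs k}\cdot\bs b_{ij}}\,e^{\ri k\hat{\bs k}\cdot\bs r_j}$, expanding each factor, and integrating against $Y_l^m(\hat{\bs k})$ over the unit sphere, the angular $\hat{\bs k}$-integral collapses to a triple product of spherical harmonics, which is (a permutation of) the Gaunt coefficient $\mathcal{G}$; orthogonality in the azimuthal index forces $t=s-m$, which is precisely why $\overline{Y_q^{s-m}(\hat{\bs b}_{ij})}$ appears in \eqref{separationmat}. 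Collecting the prefactors $4\pi\,\ri^{n-l}\,\ri^q$ gives exactly $S_{nl}^{sm}(\bs b_{ij})$. For $r_j>b_{ij}$ one simply swaps which factor carries the ``inner'' and which the ``outer'' argument in Jacobi--Anger, producing $j_q(kb_{ij})$ in place of $h_q^{(1)}(kb_{ij})$; the resulting reordering of the Gaunt triple, combined with one complex conjugation, accounts both for the $(-1)^m$ and for the permuted argument list $\mathcal{G}(l,m;n,-s;q)$ in $\widehat S_{nl}^{sm}$.

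The main obstacle I expect is not the structural decomposition but the careful bookkeeping of conjugation conventions, phase factors ($\ri^{-l}$, $\ri^q$, $(-1)^m$), and index orderings in the Gaunt coefficient, together with justification of the series/integral interchange. For the latter I would work first on a slightly smaller ball $r_j\leq(1-\delta)b_{ij}$ (respectively a slightly larger exterior $r_j\geq(1+\delta)b_{ij}$) and use the large-index asymptotics \eqref{besselasymptotic} to secure uniform absolute convergence of the tail, then pass to the closure by continuity. As a normalization sanity check I would verify the scalar case $l=m=0$, where the identity reduces to the classical Gegenbauer addition theorem $h_0^{(1)}(k|\bs r_j+\bs b_{ij}|)=\sum_{n=0}^{\infty}(2n+1)j_n(kr_j)h_n^{(1)}(kb_{ij})P_n(\cos\gamma)$; matching this fixes any residual sign/normalization ambiguity in the general formula.
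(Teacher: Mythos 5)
You should note at the outset that the paper does not actually prove Lemma \ref{thmhlynm}: it is ``recalled'' with a citation to Danos--Maximon \cite{danos1965multipole} and Martin \cite{martin2006multiple}, so there is no in-paper argument to compare against. Your sketch is, in substance, a reconstruction of the classical derivation contained in those references: existence of the two expansions from the fact that the translated outgoing wave is a smooth Helmholtz solution in the ball $r_j<b_{ij}$ (respectively a radiating solution in $r_j>b_{ij}$), identification of the coefficients through a plane-wave representation plus the Gaunt linearization of the triple product of spherical harmonics, and a normalization check against Gegenbauer's addition theorem at $l=m=0$. That structure is correct, and it also shows that the second expansion should carry $Y_n^s(\hat{\bs r}_j)$ (the $\hat{\bs r}_i$ in \eqref{2formuA} and the $\bs b_{ji}$ in \eqref{separationmat} are evident misprints that your derivation silently corrects).

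The one place where your plan is genuinely thin is the step you compress into ``after the usual Sommerfeld-type contour deformation, $f_q=h_q^{(1)}$ on the outer argument.'' The Jacobi--Anger expansion over real directions only ever produces $j_q$; the appearance of $h_q^{(1)}(kb_{ij})$ in $S_{nl}^{sm}$ (and the dichotomy $r_j<b_{ij}$ versus $r_j>b_{ij}$) is decided precisely by the integral representation of $h_l^{(1)}(kr_i)Y_l^m(\hat{\bs r}_i)$ over a complex contour, which is valid only under a geometric restriction on the splitting $\bs r_i=\bs b_{ij}+\bs r_j$, and the termwise interchange of the $\hat{\bs k}$-integral with the two infinite expansions must be justified there, not merely on a shrunken ball via \eqref{besselasymptotic} (those asymptotics control the $j_n$/$h_n^{(1)}$ factors in $n$, but the contour integral involves exponentially growing integrands in the complex polar angle). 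In the cited literature this crux is handled either by such a carefully delimited Weyl/Sommerfeld representation or, alternatively, by starting from Gegenbauer's addition theorem for $h_0^{(1)}$ and generating the higher multipoles by recurrence or by applying solid-harmonic differential operators. So: right route, correct bookkeeping targets (the phases $4\pi\ri^{n-l}\ri^q$, the selection $t=s-m$, the $(-1)^m$ and permuted Gaunt indices in $\widehat S$), but the contour-representation step is asserted rather than proved and is exactly where the real work of the lemma lives.
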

Like \eqref{approxsolution},  \eqref{2dpurelyoutgoingwave} admits the solution
\begin{equation*}
u_i(\bs r_i)=\sum\limits_{l=0}^{\infty}\sum\limits_{|m|=0}^lA_{lm}^i\psi_l^m(\bs r_i),\quad
i=1, 2, \cdots,M,
\end{equation*}
where 
$$\psi_l^m(\bs r_i):=\frac{h_l^{(1)}(kr_i)}{h_l^{(1)}(ka_i)}Y_l^m(\theta_i,\varphi_i),\quad i=1, 2, \cdots, M.$$
We truncate the series and obtain numerical approximations:
\begin{equation}\label{U1U2N}
U^i_L(\bs r_i)=\sum\limits_{l=0}^{L}\sum\limits_{|m|=0}^lA_{lm}^i\psi_l^m(\bs r_i),\quad
i=1, 2, \cdots,M.
\end{equation}
Then, the problem is to determine the coefficients $\bs A:=[\bs A^1, \bs A^2, \cdots, \bs A^M]$ where $\bs A^i=(A_{lm}^i) $.

Similar to the single scatterer case, these coefficients can be determined by matching the data on the spherical surfaces $\partial D_1,\partial D_2,\cdots,\partial D_M$  using the boundary conditions \eqref{outgoingbc}. For this purpose, we first use the developed  algorithm  to compute the spherical harmonic expansions of the boundary data $g_N$ on the spherical scatterers $\partial D_1,\partial D_2,\cdots,\partial D_M$:
\begin{equation*}
g_N|_{\partial D_j}\approx \sum\limits_{l=0}^L\sum\limits_{|m|=0}^l\widehat{g}_{lm}^{j} Y_l^m(\theta_j,\varphi_j).
\end{equation*}
According to Lemma  \ref{thmhlynm},  we have the  approximations
\begin{equation*}
\psi_l^m(\bs r_i)|_{\partial D_j}\approx\sum\limits_{n=0}^L\sum\limits_{|s|=0}^{n} \Psi_{n l}^{s m}(\bs b_{ij})Y_{n}^{s}(\theta_j,\varphi_j),
\end{equation*}
where
\begin{equation}
\label{sphcoefwavefunc}
\Psi_{n l}^{s m}(\bs b_{ij})=\frac{{S}_{nl}^{sm}(\bs b_{ij})j_{n}(ka_j)}{h^{(1)}_l(ka_i)}.
\end{equation}
Note that $\Psi_{n l}^{s m}(\bs b_{ij})$ also depends on $a_i, a_j$ and $k$, but  
we do not specify the dependence  for simplicity. Then the numerical approximations \eqref{U1U2N} on specific scatterer $\bs D_j$ can be further approximated by
\begin{equation}
U^i_L|_{\partial D_j}=\sum\limits_{n=0}^L\sum\limits_{|s|=0}^{n}\Big(\sum\limits_{l=0}^{L}\sum\limits_{|m|=0}^lA_{lm}^i\Psi_{n l}^{s m}(\bs b_{ij})\Big)Y_{n}^{s}(\widehat{\bs r}_j),\quad
j=1, 2, \cdots,i-1, i+1, \cdots, M.
\end{equation}
Although we have analytic formulas \eqref{separationmat} and \eqref{sphcoefwavefunc} for the matrices $\mathbb{S}^{ij}:=(\Psi_{n l}^{s m}(\bs b_{ij}))$, they are too complicated to be directly used in the practical computation. For example, consider the analytic formula for $\Psi_{n l}^{s m}(\bs b_{ij}),$ which has terms of the form
\begin{equation}
\label{wavefunsphcoefanalyform}
\frac{4\pi\ri^{n-l+q}h^{(1)}_q(kb_{ij})j_{n}(ka_j)\mathcal{G}(l,m;q,s-m;n)}{h^{(1)}_l(ka_i)}
\overline{Y_q^{s-m}}(\hat{\bs b}_{ij}),\quad q=0, 1, \cdots.
\end{equation}
We see that the  first kind spherical Hankel functions of different orders and arguments and quite complicated Gaunt coefficients are involved in \eqref{wavefunsphcoefanalyform}. According to the asymptotic formula \eqref{besselasymptotic}, direct computation of the first kind spherical Hankel function may lead to underflow/overflow for large order. Moreover, the Gaunt coefficients $\mathcal{G}(l,m;q,s-m;n)$ are very complicated and require sophisticated algorithm (cf. \cite{xu1996fast}) to compute. Therefore, a robust and accurate numerical algorithm for computing  \eqref{sphcoefwavefunc} is indispensable for highly accurate simulation.

A recurrence formula for the computation of $S_{nl}^{sm}(\bs b_{ij})$ was presented in \cite{chew1992recurrence,gumerov2004recursions}.  However, the calculation  of $S_{nl}^{sm}(\bs b_{ij})$ may cause underflow/overflow for large $n, l$ (see Table \ref{tabcubic4} below).  Thus, the procedure  that first calculates $S_{nl}^{sm}(\bs b_{ij})$ and then multiplies  it by the T-matrix, could result in very large errors for high modes. It is important to  note that we actually only need to compute the coefficients $\{\Psi_{n l}^{s m}(\bs b_{ij})\}$ in the multiscattering problems. Therefore, based on the recurrence formula for $S_{nl}^{sm}(\bs b_{ij})$, we derive the recurrence formulas for $\Psi_{n l}^{s m}(\bs b_{ij})$ as follows. First, we use 
\begin{equation}\label{sectorialvalformnormalize}
\begin{split}
b_{m+1}^{-m-1}&\Psi_{n,m+1}^{s,m+1}(\bs b_{ij})=b_n^{-s}\frac{\beta_m^i}{\alpha_{n-1}^j}\Psi_{n-1,m}^{s-1,m}(\bs b_{ij})-b_{n+1}^{s-1}\alpha_n^j\beta_m^i\Psi_{n+1,m}^{s-1,m}(\bs b_{ij}),\\
b_{m+1}^{-m-1}&\Psi_{n,m+1}^{s,-m-1}(\bs b_{ij})=b_n^{s}\frac{\beta_m^i}{\alpha_{n-1}^j}\Psi_{n-1,m}^{s+1,-m}(\bs b_{ij})-b_{n+1}^{-s-1}\alpha_n^j\beta_m^i\Psi_{n+1,m}^{s+1,-m}(\bs b_{ij}),
\end{split}
\end{equation}
to compute the so-called sectorial coefficients where
\begin{equation}\label{coefbnm}
\alpha_n^j=\frac{j_n(ka_j)}{j_{n+1}(ka_j)},\quad \beta_m^i=\frac{h_m(ka_i)}{h_{m+1}(ka_i)},\quad b_n^m=\begin{cases}
\sqrt{\frac{(n-m-1)(n-m)}{(2n-1)(2n+1)}}, & 0\leq m\leq n,\\[4pt]
-\sqrt{\frac{(n-m-1)(n-m)}{(2n-1)(2n+1)}}, & -n\leq m\leq 0,\\[4pt]
0, & |m|>n.
\end{cases}
\end{equation}
The recurrence process start with 
\begin{equation}\label{initialvalnormalize}
\begin{split}
\Psi_{n 0}^{s 0}(\bs b_{ij})=&(-1)^{n}\sqrt{4\pi}\frac{j_n(ka_j)h_n(kb_{ij})}{h_{0}(ka_i)}Y_n^{-s}(\widehat{\bs b}_{ij}),\\
\Psi_{0 l}^{0 m}(\bs b_{ij})=&\sqrt{4\pi}\frac{j_0(ka_j)h_l(kb_{ij})}{h_{l}(ka_i)}Y_l^{m}(\widehat{\bs b}_{ij}).
\end{split}
\end{equation}
Then all other coefficients are computed by 
\begin{equation}\label{recurenceformula2normalize}
\begin{split}
a_l^m\Psi_{n,l+1}^{sm}(\bs b_{ij})=&a_{l-1}^m\beta_{l-1}^i\beta_l^i\Psi_{n,l-1}^{sm}(\bs b_{ij})-a_n^s\alpha_n^j\beta_l^i\Psi_{n+1,l}^{sm}(\bs b_{ij})+a_{n-1}^s\frac{\beta_l^i}{\alpha_{n-1}^j}\Psi_{n-1,l}^{sm}(\bs b_{ij}),
\end{split}
\end{equation}
where
\begin{equation}\label{coefanm}
a_{n}^m=\begin{cases}
\sqrt{\frac{(n+1+|m|)(n+1-|m|)}{(2n+1)(2n+3)}},\;\; & n\geq |m|,\\[4pt]
0, & |m|>n.
\end{cases}
\end{equation}

We compare the behaviour of  $S_{nl}^{sm}(\bs b_{ij})$ and $\Psi_{nl}^{sm}(\bs b_{ij})$, and present 
their values obtained by recurrence formula in Table \ref{tabcubic4}. It  shows that it is infeasible  to use $S_{nl}^{sm}(\bs b_{ij})$ in the  numerical simulation.
\begin{table}[h]
	\caption{A comparison of $S_{nl}^{sm}(\bs b_{ij})$ and $\Psi_{nl}^{sm}(\bs b_{ij})$ for $k=90$, $\bs b_{ij}=(0.5, 0, 0)$, $a_i=a_j=0.15$.}
	\begin{center}
		\begin{tabular}{|r|r|r|}
			\hline
			$s$ & $S_{90,90}^{s,0}(\bs b_{ij})\qquad\qquad$ & $\Psi_{90,90}^{s,0}(\bs b_{ij})\qquad\qquad$ \\
			\hline
			0 & 0.02655 - 1.18586e+84i & 7.78972e-43 - 1.39298e-58i\\
			\hline
			1 & -9.99986e-18 + 1.30687e+70i & -8.58424e-57 + 8.75699e-73i\\
			\hline
			2 & 0.00052 + 1.17247e+84i & -7.70172e-43 + 1.49713e-58i\\
			\hline
			3 & -2.75346e-17 - 1.27731e+70i & 8.39038e-57 - 1.47936e-72i\\
			\hline
			4 & 0.02106 - 1.13316e+84i & 7.44350e-43 - 4.30398e-58i\\
			\hline
		\end{tabular}
	\end{center}
	\label{tabcubic4}
\end{table}

By matching the boundary data on $\partial D_i$, we obtain linear system
\begin{equation}\label{ABsystem}
\begin{bmatrix}
\mathbb I & \mathbb S^{21} & \mathbb S^{31} & \cdots & \mathbb S^{M1}\\
\mathbb S^{12} & \mathbb I & \mathbb S^{32} & \cdots & \mathbb S^{M2}\\
\vdots & \vdots & \vdots & \ddots & \vdots\\
\mathbb S^{1M} & \mathbb S^{2M} & \mathbb S^{3M} & \cdots & \mathbb{I}
\end{bmatrix}\begin{bmatrix}
\bs A^1\\
\bs A^2\\
\vdots\\
\bs A^M
\end{bmatrix}=\begin{bmatrix}
\bs G^1\\
\bs G^2\\
\vdots\\
\bs G^M
\end{bmatrix}
\end{equation}
where $\bs G^{i}=(\widehat{g}_{lm}^{i})$ are vectors consisting  of spherical harmonic expansion coefficients of $g_N$ on  $\partial D_i$, respectively. By using the Lapack to directly solve this linear system, we are able to solve a multiple scattering problem with a  limited  number of scatterers. We  report the simulation result  
of  $25$ scatterers at the end of this subsection. It is worthwhile to point out that  our expansion approach can be adopted not only to other fast numerical methods (e.g., fast multipole method \cite{koc1998calculation,gumerov2005computation}, and the iterative method \cite{hamid1991iterative}) to deal with many more spherical scatterers but also to other appropriate solvers (e.g., finite difference method in curvilinear coordinates \cite{acosta2010coupling}) to simulate multiple scattering with irregular scatterers.

%
%
%
%
%
%
%
%

 Next, we present some numerical results. We set two spherical scatterers located at $(-1, 0, 0)$ and $(1, 0, 0)$ with radius $a_1=a_2=0.25$, respectively. As in the last example 1, the incident wave is chosen to be the spectral element interpolations of a plane wave $e^{\ri k\hat{\bs k}\cdot\bs x}$ and a spherical wave $e^{\ri k|\bs x-\bs x_0|}$, respectively. We set $\hat{\bs k}=(0, 0, 1)$, ${\bs x}_0=(0, 0, 1)$, $k=50$ and use a uniform $3\times 4$ mesh in the $\theta-\varphi$ plane for the spherical surface and polynomials of degree $N=50$ for the interpolation. According to our computation, the interpolation errors for $e^{\ri k\hat{\bs k}\cdot\bs x}$ and $e^{\ri k|\bs x-\bs x_0|}$ in a discrete maximum norm are $9.4565e-15$ and $4.5760e-14$, respectively. 
Define
$$\widehat{g}_{l}=\max\limits_{0\leq |m|\leq l}\big\{|\widehat{g}_{lm}^{1}|, |\widehat{g}_{lm}^{2}|\big\},\quad C_{l}=\max\limits_{0\leq |m|\leq l}\big\{|{A}_{lm}^1|,|{B}_{lm}^2|\big\},$$
and list the computed values for $35\leq l\leq 40$ in Table \ref{tabcubic2}. These results show that it is enough to set the truncation mode $L=40$ for $k=50$. We depict the scattering waves in Figure \ref{multiscatter01-01}. 
\begin{table}[h]
	\caption{Magnitude of spherical harmonic coefficients (two spherical scatterers).}
	\begin{center}
		\begin{tabular}{|c|c|c|c|c|}
			\hline
			\multirow{2}{*}{$l$} & \multicolumn{2}{|c|}{plane incident wave} & \multicolumn{2}{|c|}{spherical incident wave}\\
			\cline{2-5}
			&  $\widehat{g}_{l}$  & $C_l$ & $\widehat{g}_{l}$ & $C_l$\\
			\hline
			35 &  1.0371e-12  &  1.0366e-12    & 2.6723e-13  &  2.6756e-13  \\
			\hline
			36 &  1.8550e-13  &  1.8483e-13    & 4.7839e-14  &  4.7969e-14  \\
			\hline
			37 &  3.2006e-14  &  3.1935e-14    & 8.1178e-15  &  8.0802e-15    \\
			\hline
			38 &  5.5439e-15  &  5.5707e-15    & 1.3823e-15  &  1.3574e-15   \\
			\hline
			39 &  9.3425e-16  &  9.3619e-16    & 4.6158e-16  &  4.6883e-16   \\
			\hline
			40 &  4.6226e-16  &  4.7945e-16    & 5.4598e-16  &  5.5918e-16    \\
			\hline		
		\end{tabular}
	\end{center}
	\label{tabcubic2}
\end{table}

\begin{figure}[!ht]
	\begin{center}
		\subfigure[Real part (with plane incident wave)]{\includegraphics[scale=0.31]{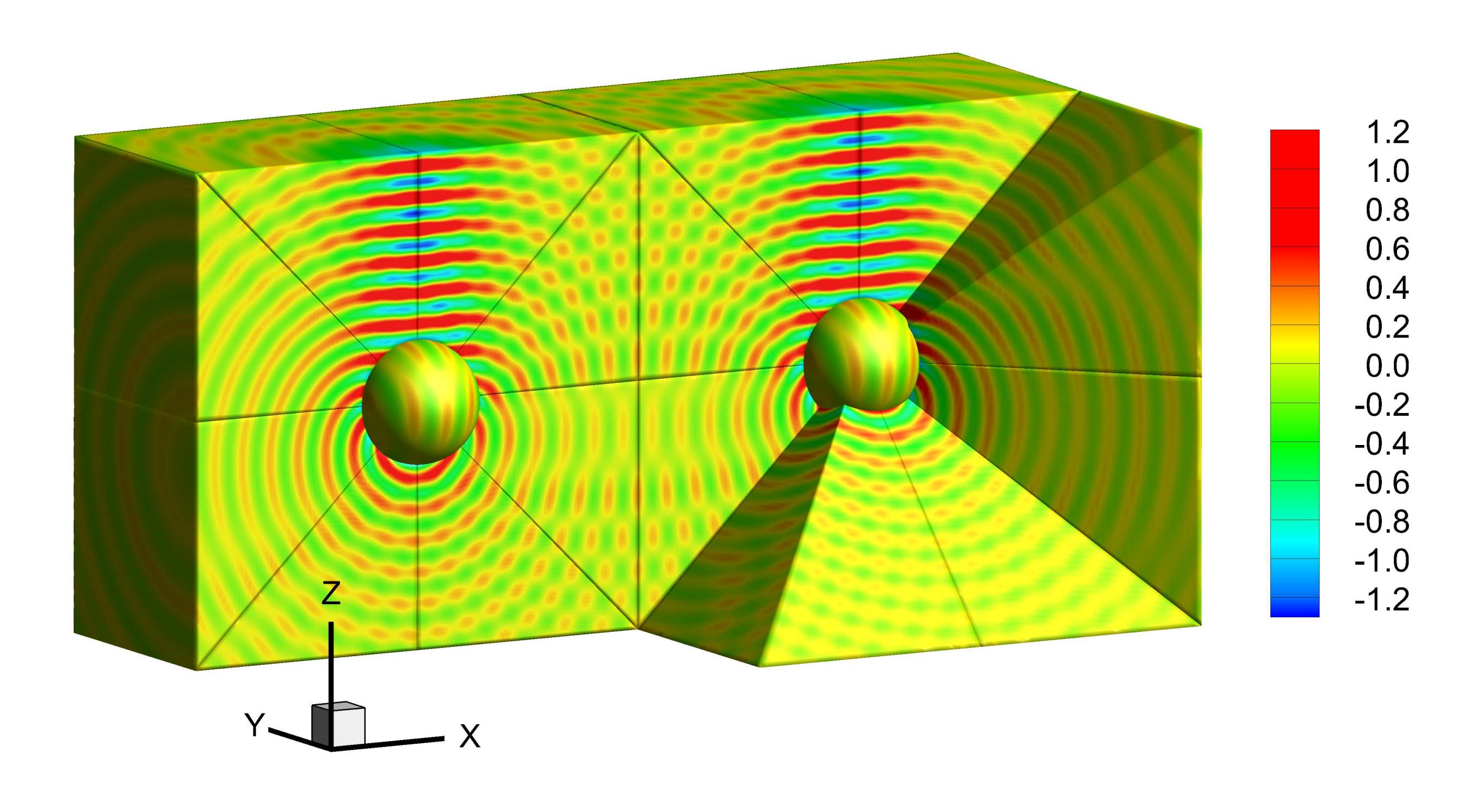}}
		\subfigure[Imaginary part (with plane incident wave)]{\includegraphics[scale=0.31]{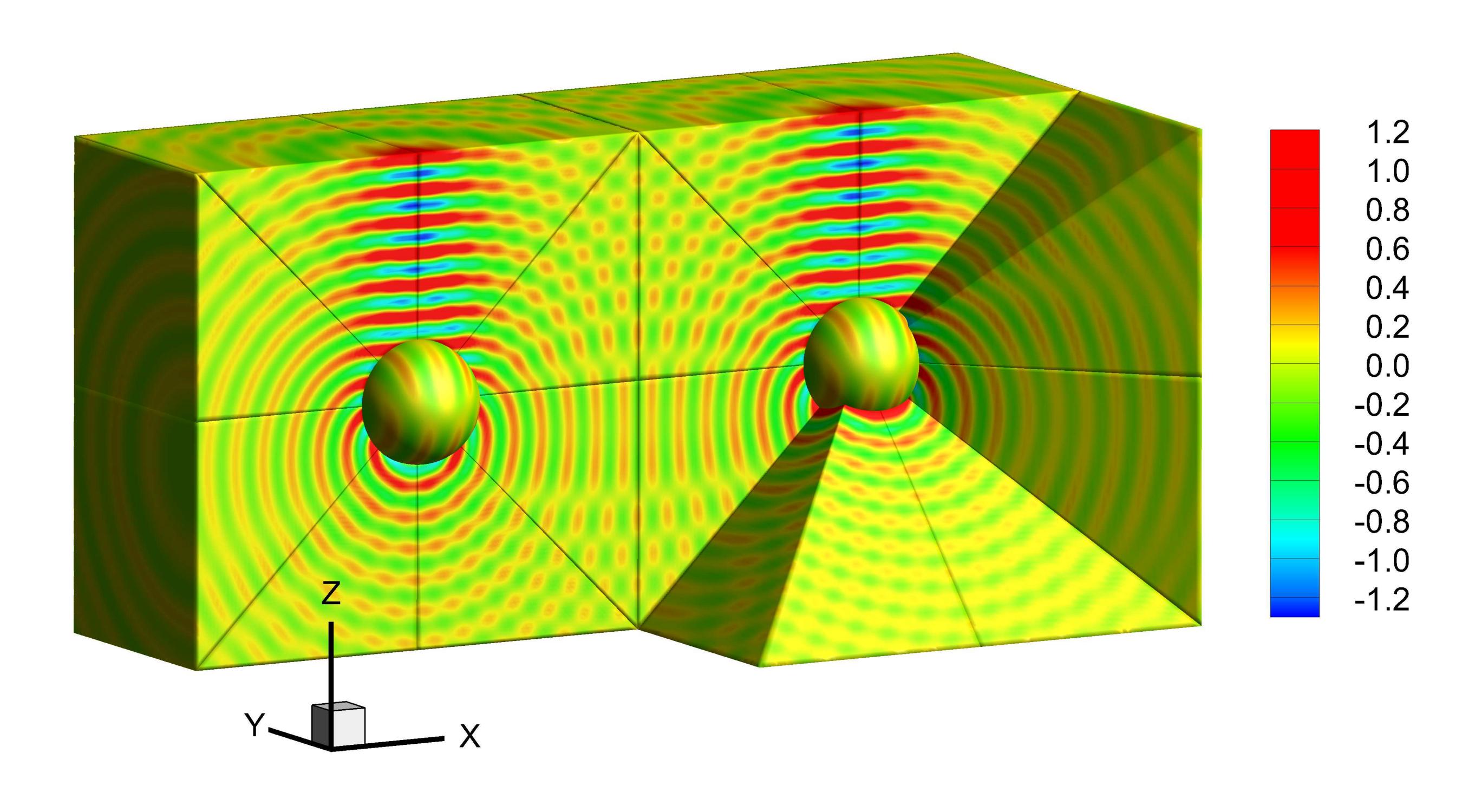}}
		\subfigure[Real part (with spherical incident wave)]{\includegraphics[scale=0.31]{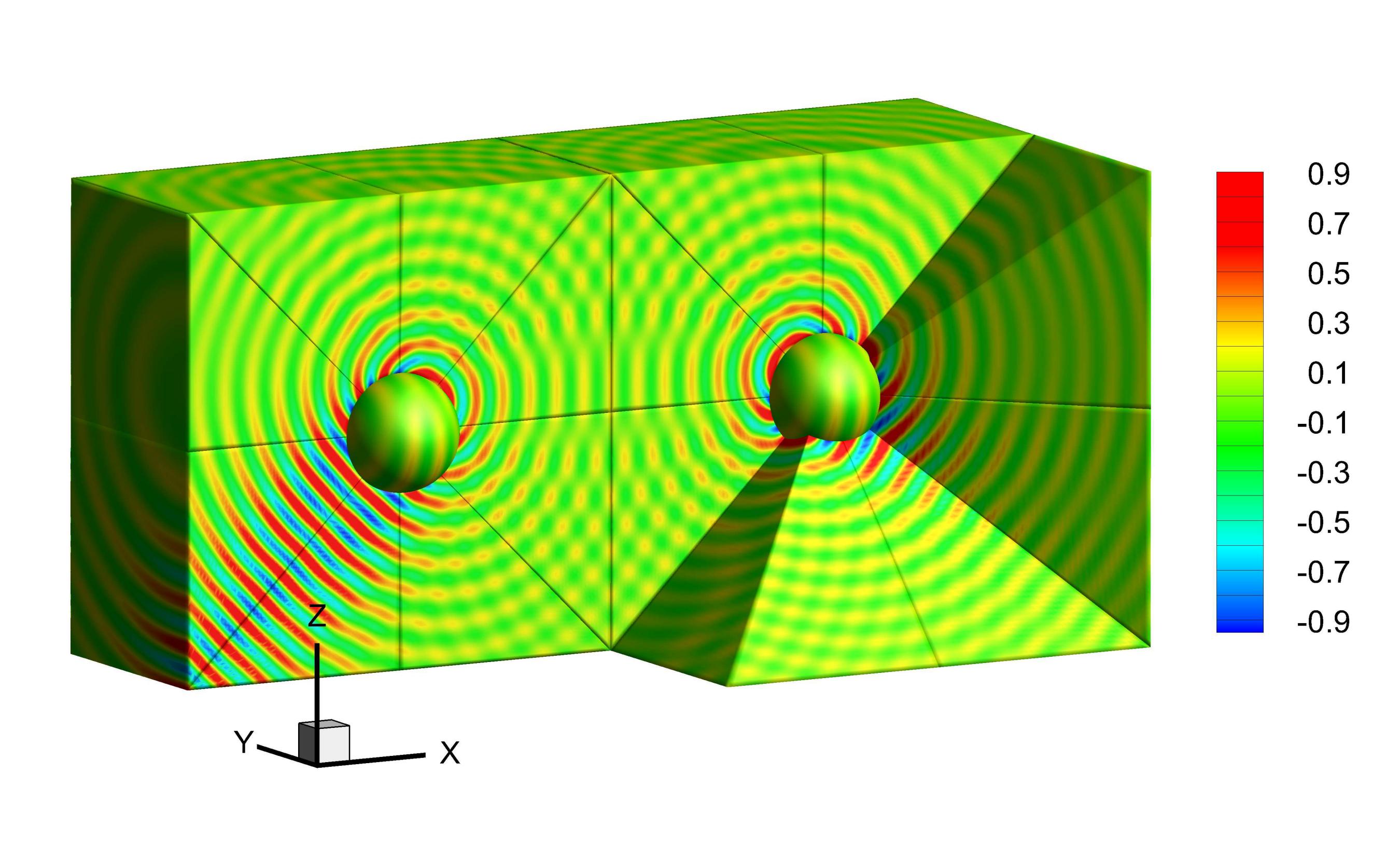}}
		\subfigure[Imaginary part (with spherical incident wave)]{\includegraphics[scale=0.31]{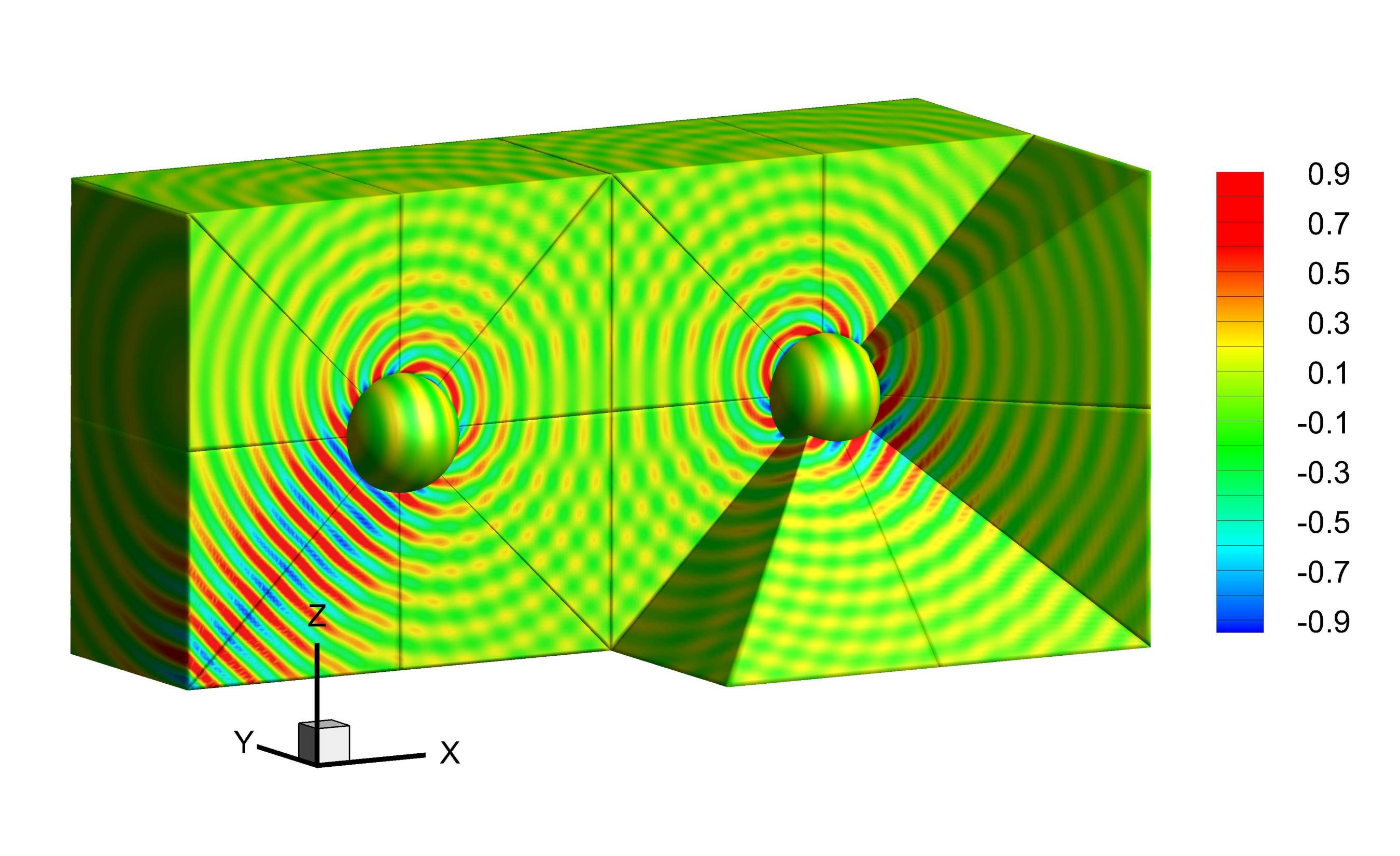}}
		\caption{Scattering waves from two spherical scatterers.}
		\label{multiscatter01-01}
	\end{center}
\end{figure}
\begin{table}[h]
	\caption{Magnitude of spherical harmonic coefficients (25 spherical scatterers).}
	\begin{center}
		\begin{tabular}{|c|c|c|c|c|c|}
			\hline
			$l$ & $\widehat{g}_{l}$  & $C_l$  & $l$ & $\widehat{g}_{l}$  & $C_l$ \\
			\hline
			25 &  1.4501e-9  &  1.8695e-9   & 28 &  6.6724e-12  &  6.9720e-12     \\
			\hline
			26 &  2.5081e-10  &  2.8347e-10    & 29 &  1.0286e-12  &  8.7441e-13      \\
			\hline
			27 & 4.1688e-11  &  4.2603e-11  & 30 &  1.5652e-13  &  1.3781e-13        \\
			\hline		
		\end{tabular}
	\end{center}
	\label{tabcubic3}
\end{table}
We then test the algorithm for many scatterers. We set $25$ spherical scatterers located at a uniform grid points in square area in $x$-$z$ plane delimited by points $(-2, 0, -2)$ and $(2, 0, 2)$. All scatterers are identical and of radius $a=0.25$. The incident wave is chosen to be the spectral element interpolations of a plane wave $e^{\ri k\hat{\bs k}\cdot\bs x}$. We set $\hat{\bs k}=(0, 0, 1)$, $k=35$ and use a uniform $3\times 4$ mesh in the $\theta$-$\varphi$ plane for the spherical surface and polynomials of degree $N=30$ for the interpolation. According to our computation, the interpolation errors for $e^{\ri k\hat{\bs k}\cdot\bs x}$ in a discrete maximum norm are $2.5367e-14$ . 
Define
$$\widehat{g}_{l}=\max\limits_{0\leq |m|\leq l}\big\{|\widehat{g}_{lm}^{1}|, \cdots, |\widehat{g}_{lm}^{25}|\big\},\quad C_{l}=\max\limits_{0\leq |m|\leq l}\big\{|{A}_{lm}^1|,\cdots, |{A}_{lm}^{25}|\big\},$$
and list the computed values from $l=25$ to $l=30$ in Table \ref{tabcubic3}. The results show that very accurate solution can be obtained by setting the truncation mode $L=30$ for $k=35$ in this example. We depict the scattering waves in Figure \ref{multiscatter01-02}. 

\begin{figure}[!ht]
	\begin{center}
		\subfigure[Real part (with plane incident wave)]{\includegraphics[scale=0.36]{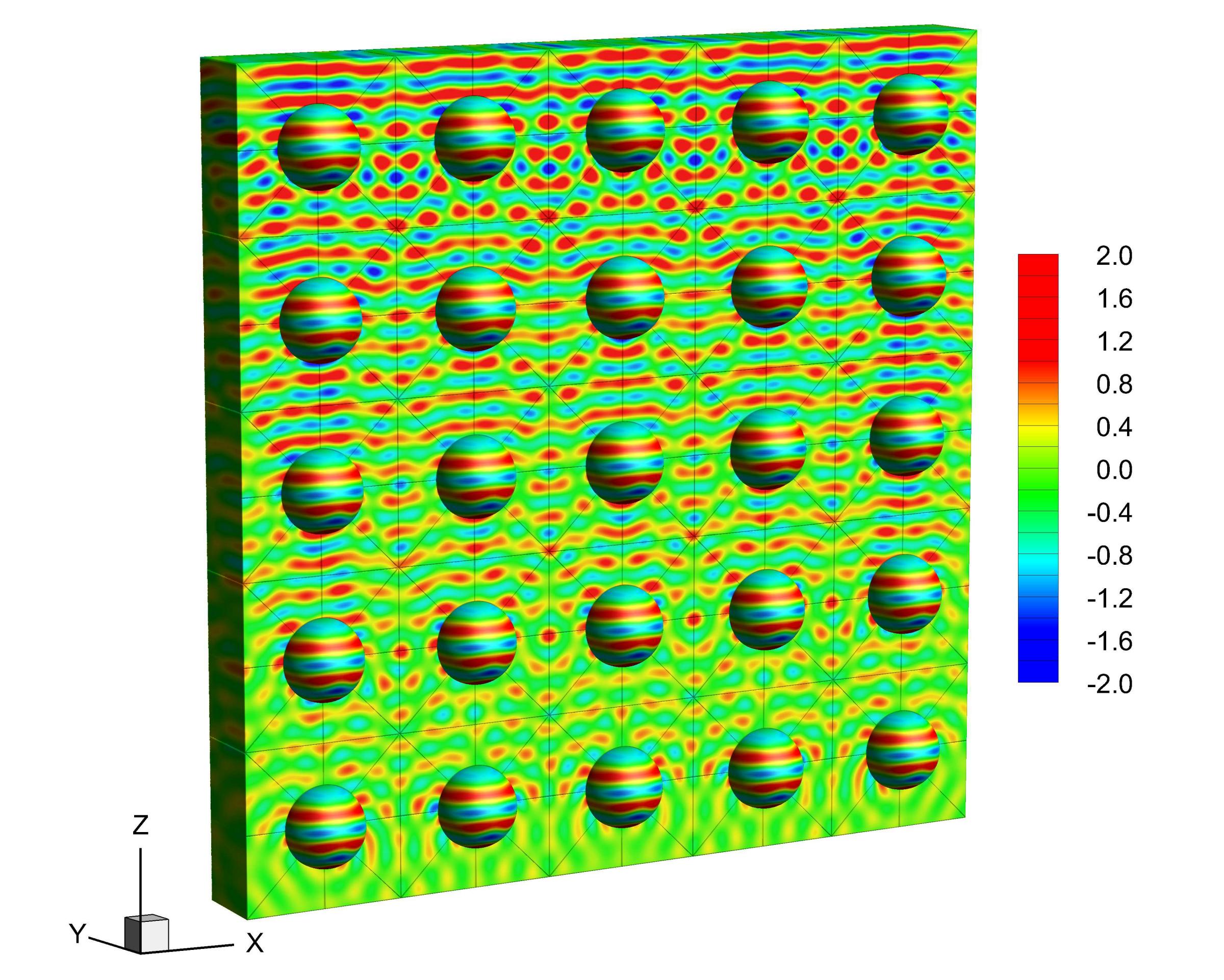}}
		\subfigure[Imaginary part (with plane incident wave)]{\includegraphics[scale=0.35]{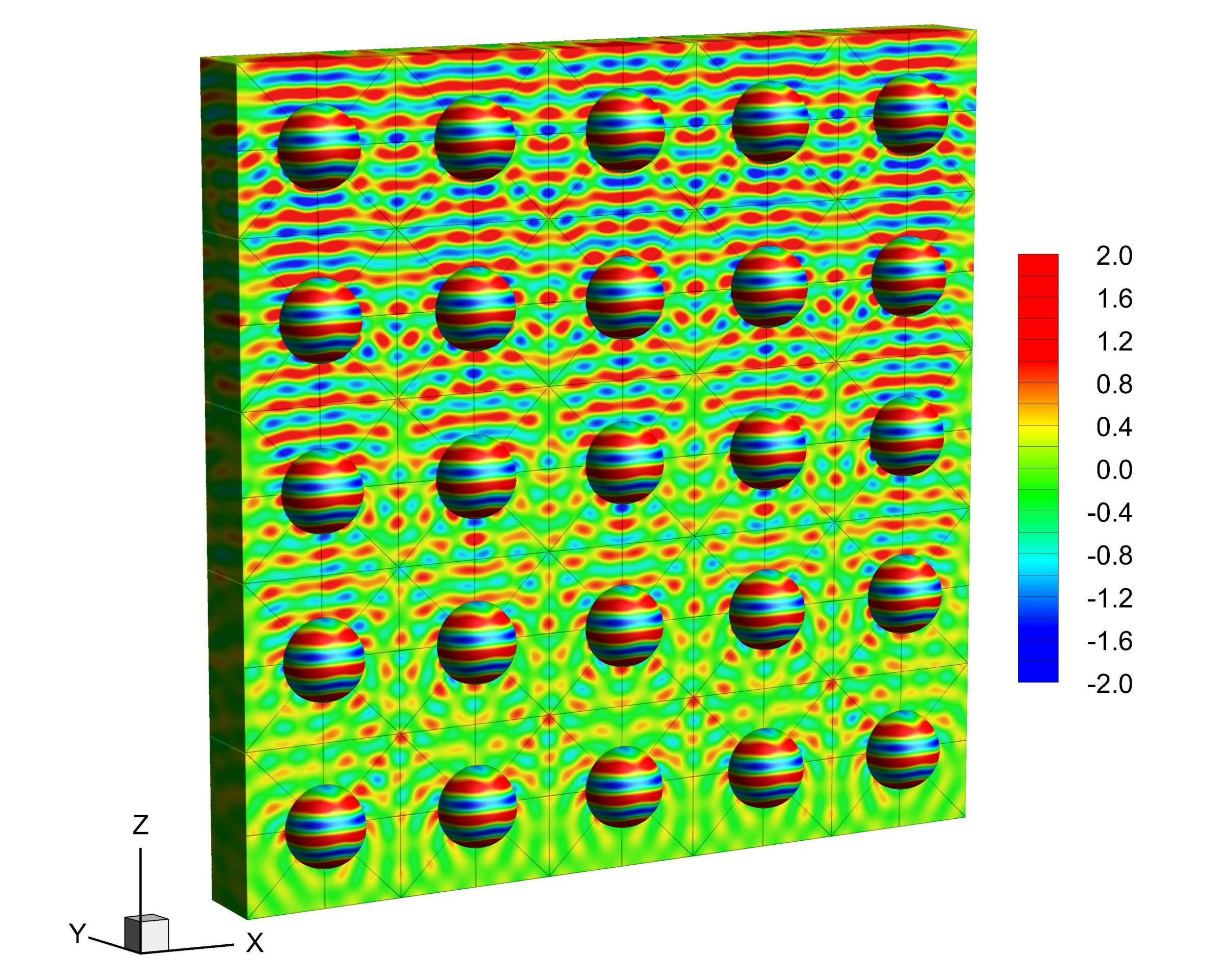}}	
		\caption{Scattering waves from many spherical scatterers.}
		\label{multiscatter01-02}
	\end{center}
\end{figure}

\subsection{Concluding remarks}
In this paper, we presented accurate  numerical tools for computing  SPH/VSH expansions of scalars/vector fields
with given nodal values of spectral element approximations. With the aid of some analytic  formulas,
we could accurately calculate the underlying highly oscillatory integrals so it is particularly robust for high mode expansions. As direct applications of the algorithms, we considered several 3D scattering problems, and demonstrated that stable and accurate simulations can be achieved with afford computational cost.
It is expected that the development in this paper paves the way for the simulation of scattering problems with complex scatterers using the exact DtN boundary conditions. We leave the integration of the tool herein with the interior spectral-element solver for challenging 3D simulations to a future work.


\end{document}